\numberwithin{equation}{section}
\newtheorem{thm}{Theorem}[section]
\newtheorem{lemma}[thm]{Lemma}
\newtheorem{cor}[thm]{Corollary}
\newtheorem{conj}[thm]{Conjecture}
\newtheorem{Definition}[thm]{Definition}
\newtheorem{Example}[thm]{Example}
\newenvironment{example}
  {\begin{Example}\rm}{\end{Example}}
\newtheorem{Remark}[thm]{Remark}
\newenvironment{remark}
  {\begin{Remark}\rm}{\end{Remark}}
  \newtheorem{Question}[thm]{Question}
\newenvironment{question}
  {\begin{Question}\rm}{\end{Question}}
    \newtheorem{Problem}[thm]{Problem}
\crefname{thm}{Theorem}{Theorems}
\crefname{lemma}{Lemma}{Lemmas}
\crefname{cor}{Corollary}{Corollaries}
\crefname{prop}{Proposition}{Propositions}
\crefname{conj}{Conjecture}{Conjectures}
\crefname{definition}{Definition}{Definitions}
\crefname{example}{Example}{Examples}
\crefname{remark}{Remark}{Remarks}
\crefname{question}{Question}{Questions}
\crefname{problem}{Problem}{Problems}
\newcommand{\emailhref}[1]{\email{\href{#1}{#1}}}
\newcommand{\dfn}[1]{\textcolor{blue}{\emph{#1}}}
\title[Upho lattices II]{Upho lattices II: ways of realizing a core}
\author{Sam Hopkins}\emailhref{samuelfhopkins@gmail.com}
\address{Department of Mathematics, Howard University, Washington, DC}
\author{Joel B. Lewis}\emailhref{jblewis@gwu.edu}
\address{Department of Mathematics, George Washington University, Washington, DC}
\begin{document}

\begin{abstract}
A poset is called upper homogeneous, or ``upho,'' if all of its principal order filters are isomorphic to the whole poset. In previous work of the first author, it was shown that each (finite-type $\mathbb{N}$-graded) upho lattice has associated to it a finite graded lattice, called its core, which determines the rank generating function of the upho lattice. In that prior work the question of which finite graded lattices arise as cores was explored. Here, we study the question of in how many different ways a given finite graded lattice can be realized as the core of an upho lattice. We show that if the finite lattice has no nontrivial automorphisms, then it is the core of finitely many upho lattices. We also show that the number of ways a finite lattice can be realized as a core is unbounded, even when restricting to rank-two lattices. We end with a discussion of a potential algorithm for listing all the ways to realize a given finite lattice as a core.
\end{abstract}

\maketitle

\section{Introduction} \label{sec:intro}

A poset $\mathcal{P}$ is called \dfn{upper homogeneous}, or ``\dfn{upho},'' if for every $p \in \mathcal{P}$, the principal order filter $V_p = \{q\in \mathcal{P}\colon q \geq p\}$ is isomorphic to the original poset $\mathcal{P}$. This class of infinite, self-similar posets was introduced a few years ago by Stanley~\cite{stanley2020upho, stanley2021rational}, and has subsequently been shown by a number of authors~\cite{gao2020upho, hopkins2022note, hopkins2024upho1, fu2024upho} to be quite rich and fascinating.

In~\cite{hopkins2022note, hopkins2024upho1}, the first author explained how each (finite-type $\mathbb{N}$-graded) upho \emph{lattice} has associated to it a finite graded lattice that controls many features of the upho lattice. More precisely, for an upho lattice~$\mathcal{L}$, its \dfn{core} is $L \coloneqq [\hat{0},s_1\vee \cdots \vee s_r]$, the interval from the minimum $\hat{0}$ to the join of its atoms $s_1,\ldots,s_r$. In~\cite{hopkins2022note}, it was shown that $F(\mathcal{L};x) = \chi^*(L;x)^{-1}$, where $F(\mathcal{L};x)$ is the rank generating function of the upho lattice $\mathcal{L}$ and $\chi^*(L;x)$ is the (reciprocal) characteristic polynomial of its core~$L$. So the core determines how quickly the upho lattice grows.

In~\cite{hopkins2024upho1}, the question of which finite graded lattices are cores of upho lattices was explored. It was shown that this is a very subtle question: many important finite lattices are cores, but many also are not. In general, it is unclear how one can determine if a given finite lattice is the core of some upho lattice.

Importantly, the core does not determine the upho lattice, in the sense that the same finite lattice can be the core of multiple upho lattices. For example, as depicted in \cref{fig:b2_ex}, the rank-two Boolean lattice $B_2$ is the core of two different upho lattices.\footnote{We will show in \cref{sec:rank_two} that these are the \emph{only} two upho lattices which have $B_2$ as their core.} In this paper, we study the question of in how many different ways a given finite lattice can be realized as a core. In a sense, we study whether every upho lattice can be represented by a finite amount of data.

\begin{figure}
    \begin{center}
    \begin{tikzpicture}[scale=0.55]
    \node[circle, inner sep=1pt, fill=black] (1) at (0,0) {};
    \node[circle, inner sep=1pt, fill=black] (2) at (-1,1) {};
    \node[circle, inner sep=1pt, fill=black] (3) at (1,1) {};
    \node[circle, inner sep=1pt, fill=black] (4) at (-2,2) {};
    \node[circle, inner sep=1pt, fill=black] (5) at (0,2) {};
    \node[circle, inner sep=1pt, fill=black] (6) at (2,2) {};
    \node[circle, inner sep=1pt, fill=black] (7) at (-3,3) {};
    \node[circle, inner sep=1pt, fill=black] (8) at (-1,3) {};
    \node[circle, inner sep=1pt, fill=black] (9) at (1,3) {};
    \node[circle, inner sep=1pt, fill=black] (10) at (3,3) {};
    \node[circle, inner sep=1pt, fill=black] (11) at (-4,4) {};
    \node[circle, inner sep=1pt, fill=black] (12) at (-2,4) {};
    \node[circle, inner sep=1pt, fill=black] (13) at (0,4) {};
    \node[circle, inner sep=1pt, fill=black] (14) at (2,4) {};
    \node[circle, inner sep=1pt, fill=black] (15) at (4,4) {};
    \node (16) at (-5,5) {};
    \node (17) at (-3,5) {};
    \node (18) at (-1,5) {};
    \node (19) at (1,5) {};
    \node (20) at (3,5) {};
    \node (21) at (5,5) {};
    \draw[thick] (1) -- (2) -- (4) -- (7) -- (11);
    \draw[thick] (3) -- (5) -- (8) -- (12);
    \draw[thick] (6) -- (9) -- (13);
    \draw[thick] (10) -- (14);
    \draw[thick] (1) -- (3) -- (6) -- (10) -- (15);
    \draw[thick] (2) -- (5) -- (9) -- (14);
    \draw[thick] (4) -- (8) -- (13);
    \draw[thick] (7) -- (12);
    \draw (16) -- (11) -- (17) -- (12) -- (18) -- (13) -- (19) -- (14) -- (20) -- (15) -- (21);
    \end{tikzpicture} 
    \qquad \vrule \qquad 
    \begin{tikzpicture}[scale=0.55]
    \node[circle, inner sep=1pt, fill=black] (1) at (0,0) {};
    \node[circle, inner sep=1pt, fill=black] (2) at (-1,1) {};
    \node[circle, inner sep=1pt, fill=black] (3) at (1,1) {};
    \node[circle, inner sep=1pt, fill=black] (4) at (-2,2) {};
    \node[circle, inner sep=1pt, fill=black] (5) at (0,2) {};
    \node[circle, inner sep=1pt, fill=black] (6) at (2,2) {};
    \node[circle, inner sep=1pt, fill=black] (7) at (-3,3) {};
    \node[circle, inner sep=1pt, fill=black] (8) at (-1,3) {};
    \node[circle, inner sep=1pt, fill=black] (9) at (1,3) {};
    \node[circle, inner sep=1pt, fill=black] (10) at (3,3) {};
    \node[circle, inner sep=1pt, fill=black] (11) at (-4,4) {};
    \node[circle, inner sep=1pt, fill=black] (12) at (-2,4) {};
    \node[circle, inner sep=1pt, fill=black] (13) at (0,4) {};
    \node[circle, inner sep=1pt, fill=black] (14) at (2,4) {};
    \node[circle, inner sep=1pt, fill=black] (15) at (4,4) {};
    \node (16) at (-5,5) {};
    \node (17) at (-3,5) {};
    \node (18) at (-1,5) {};
    \node (19) at (1,5) {};
    \node (20) at (3,5) {};
    \node (21) at (5,5) {};
    \draw[thick] (1) -- (2) -- (5) -- (9) -- (13);
    \draw[thick] (1) -- (3) -- (6) -- (10) -- (15);
    \draw[thick] (3) -- (5);
    \draw[thick] (2) -- (4) -- (7) -- (11);
    \draw[thick] (4) -- (9) -- (6);
    \draw[thick] (5) -- (8) -- (12);
    \draw[thick] (7) -- (13) -- (8) -- (13) -- (10);
    \draw[thick] (9) -- (14);
    \draw (11) -- (18) -- (12) -- (18) -- (13) -- (18) -- (14) -- (18) -- (15) -- (18);
    \draw (11) -- (16);
    \draw (12) -- (17);
    \draw (13) -- (19);
    \draw (14) -- (20);
    \draw (15) -- (21);
    \end{tikzpicture}
    \end{center}
    \caption{Two different upho lattices with core $B_2$.}
    \label{fig:b2_ex}
\end{figure}
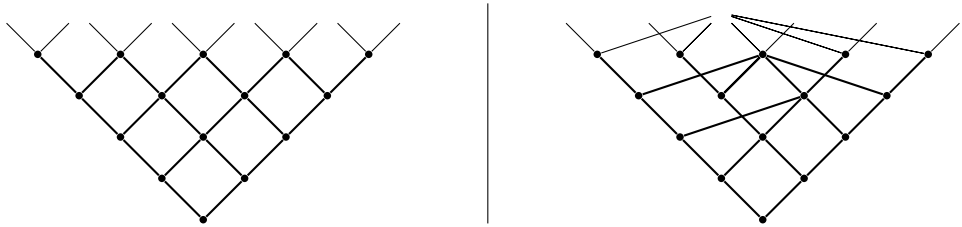

Specifically, for a finite graded lattice $L$, let $\kappa(L)$ be the number of different upho lattices of which $L$ is a core. We are interested in the behavior of the function $\kappa(L)$. We will see that the way this function behaves is also quite subtle: in some ways, this function is ``small;'' in other ways, it is ``big.''

Concerning the ``smallness'' of $\kappa(L)$, our first major result, proved in \cref{sec:color}, says that if the finite lattice~$L$ has no nontrivial automorphisms, then $\kappa(L)$ is finite. This result suggests that~$\kappa(L)$ may always be finite for all finite lattices $L$. But we currently cannot rule out the possibility that $\kappa(L)$ is infinite, even uncountably infinite, for some finite lattice $L$ with a nontrivial automorphism.

Concerning the ``bigness'' of $\kappa(L)$, our second major result, established in \cref{sec:rank_two}, says that this function is unbounded. Actually, as soon as we know there is one lattice $L$ with $\kappa(L) > 1$, a simple product construction implies that $\kappa(L)$ is unbounded. But we show, what is much less trivial, that $\kappa(L)$ is unbounded even when restricting to lattices $L$ of rank two. There is a unique rank-two graded lattice with $n$ atoms, denoted $M_n$, and we show more precisely that for each $n \geq 2$, $\kappa(M_n) \geq p(n)$, where $p(n)$ is the number of integer partitions of $n$.

In the proof of both of our major results, monoids are used in an essential way. Previous work~\cite{gao2020upho, hopkins2024upho1, fu2024upho} highlighted the close connection between monoids and upho posets. The key observation is that each (homogeneously finitely generated) left-cancellative monoid gives rise to a (finite-type $\mathbb{N}$-graded) upho poset. 

An intriguing open question is whether \emph{every} upho lattice comes from a monoid in this way.\footnote{In fact, as far as we know, it is possible that every (finite-type $\mathbb{N}$-graded) upho poset comes from a (homogeneously finitely generated) left-cancellative monoid. \cite[Remark~5.1]{gao2020upho} says ``[the monoid construction] is not a method to construct \emph{all} upho posets,'' but Yibo Gao has told us that he does not know an example of an upho poset which definitively does not come from a monoid.} If so, it would mean that this order-theoretic definition has intrinsic algebraic content. It would also imply the finiteness of $\kappa(L)$ for all finite lattices $L$, and might lead to an algorithm for listing all upho lattices of which $L$ is a core. We discuss some speculations along these lines in the final section, \cref{sec:algorithm}.

\subsection*{Acknowledgments} 

We thank Ziyao Fu, Yibo Gao, Jon McCammond, Yulin Peng, David Speyer, and Yuchong Zhang for useful comments related to this work. SageMath~\cite{Sage} was an important computational aid in this research. Finally, we thank the anonymous referees for their careful reading of our paper, and useful comments.

\section{Background on upho posets and monoids} \label{sec:background}

\subsection{Posets} 

We follow standard terminology for posets as laid out for instance in~\cite[Chapter~3]{stanley2012ec1}. We also follow the notation in~\cite[\S2]{hopkins2024upho1}, where these preliminaries on posets are reviewed in more detail.

Let~$P=(P,\leq)$ be a poset. We use $\hat{0}$ and $\hat{1}$ to denote the \dfn{minimum} and \dfn{maximum} of $P$ when they exist. We use $\lessdot$ for the \dfn{cover} relation of $P$. An \dfn{atom} of $P$ is an element $s\in P$ with $\hat{0}\lessdot s$.

A poset $L$ is a \dfn{meet semilattice} if every pair of elements $x,y\in L$ has a \dfn{meet}, i.e., greatest lower bound, denoted $x\wedge y$, and it is a \dfn{join semilattice} if every pair of elements $x,y\in L$ has a \dfn{join}, i.e., least upper bound, denoted $x \vee y$. The poset $L$ is a \dfn{lattice} if it is both a meet and join semilattice.

Some posets $P$ we work with will be infinite. But every such $P$ will be at least \dfn{locally finite}, which means that all \dfn{intervals} $[x,y] \coloneqq \{z \in P\colon x \leq z \leq y\}$ in~$P$ are finite. We recall that the \dfn{M\"{o}bius function} $\mu(x,y)$ of a locally finite poset $P$ can be defined recursively by $\mu(x,x) \coloneqq 1$ for all $x\in P$ and $\mu(x,y) \coloneqq -\sum_{x\leq z < y} \mu(x,z)$ for~$x < y \in P$. 

Since we routinely work with both finite and infinite posets, so from now on we use the convention that normal script letters (like $P$ and $L$) denote finite posets while calligraphic letters (like $\mathcal{P}$ and $\mathcal{L}$) denote infinite posets.

A finite poset $P$ is \dfn{graded} (of \dfn{rank} $n$) if it has a minimum $\hat{0}$, a maximum $\hat{1}$, and it can be written as a disjoint union $P=P_0\sqcup P_1 \sqcup \cdots \sqcup P_n$ such that every maximal chain in $P$ is of the form $\hat{0}=x_0 \lessdot x_1 \lessdot \cdots \lessdot x_n=\hat{1}$ with~$x_i \in P_i$. In this case, the \dfn{rank function} $\rho \colon P \to \{0,1,\ldots,n\}$ of $P$ is given by $\rho(x) = i$ if $x \in P_i$. 
The \dfn{rank generating function} of $P$ is then defined to be
\[    F(P;x) \coloneqq \sum_{p\in P} x^{\rho(p)},\]
and its \dfn{(reciprocal) characteristic polynomial} is defined to be
\[    \chi^*(P;x) \coloneqq  \sum_{p\in P} \mu(\hat{0},p) \, x^{\rho(p)} \, .\]
For example, the \dfn{Boolean lattice}~$B_n$ of subsets of $[n] \coloneqq  \{1,\ldots,n\}$ ordered by inclusion is graded of rank $n$, and we have $F(B_n;x)=(1+x)^n$ and $\chi^*(B_n;x)=(1-x)^n$.

We use $\mathbb{N}\coloneqq  \{0,1,2,\ldots\}$ for the natural numbers. An infinite poset $\mathcal{P}$ is \dfn{$\mathbb{N}$-graded} if it has a minimum $\hat{0}$ and it can be written as a disjoint union $\mathcal{P} = \bigsqcup_{i=0}^{\infty} \mathcal{P}_i$ such that every maximal chain in $\mathcal{P}$ is of the form $\hat{0}=x_0 \lessdot x_1 \lessdot x_2 \lessdot \cdots$ with $x_i \in \mathcal{P}_i$. In this case, the poset's \dfn{rank function} $\rho \colon \mathcal{P} \to \mathbb{N}$ is given by $\rho(x) = i$ if $x \in \mathcal{P}_i$. We say such a $\mathcal{P}$ is \dfn{finite-type} $\mathbb{N}$-graded if~$\#\mathcal{P}_i < \infty$ for all $i$. The \dfn{rank generating} and \dfn{characteristic generating functions} of such a $\mathcal{P}$ are then defined to be
\[
    F(\mathcal{P};x) \coloneqq  \sum_{p\in \mathcal{P}} x^{\rho(p)} \qquad \text{ and } \qquad
    \chi^*(\mathcal{P};x) \coloneqq  \sum_{p\in \mathcal{P}} \mu(\hat{0},p) \, x^{\rho(p)} \, ,
\]
respectively.  For example, $\mathbb{N}^n$, with the usual Cartesian product partial order, is finite-type $\mathbb{N}$-graded with $F(\mathbb{N}^n;x)=\frac{1}{(1-x)^n}$ and $\chi^*(\mathbb{N}^n;x)=(1-x)^n$.

\subsection{Upho posets}  

A poset $\mathcal{P}$ is \dfn{upper homogeneous (upho)} if for every~$p\in \mathcal{P}$, the corresponding \dfn{principal order filter} $V_p \coloneqq  \{q\in \mathcal{P}\colon q \geq p\}$ is isomorphic to~$\mathcal{P}$. To avoid trivialities, we assume all upho posets have at least two elements; then, they must be infinite. In order to be able to apply the tools of enumerative and algebraic combinatorics to study them, {\bf all upho posets are assumed finite-type $\mathbb{N}$-graded} from now on. For example, $\mathbb{N}^n$ is an upho lattice.

\begin{remark}
In \cite{fu2024upho}, the authors consider other, weaker finiteness conditions for upho posets. We believe that much of what we do in this paper could be adapted to those more general classes of upho posets, although certainly \emph{some} finiteness condition is needed. But for simplicity, and because the class of finite-type $\mathbb{N}$-graded upho posets is already very rich, we restrict our attention to this class.
\end{remark}

As observed previously by the first author, upho posets have an interesting symmetry regarding their rank and characteristic generating functions.

\begin{thm}[{\cite[Theorem~1]{hopkins2022note}}] \label{thm:upho_gfs}
If $\mathcal{P}$ is an upho poset, then $F(\mathcal{P};x) = \chi^*(\mathcal{P};x)^{-1}$.
\end{thm}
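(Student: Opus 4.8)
The plan is to derive the identity $F(\mathcal{P};x)\,\chi^*(\mathcal{P};x) = 1$ directly from the defining recursion of the Möbius function, invoking the upho hypothesis exactly once to rewrite a single filter sum. The starting point is the standard consequence of that recursion, namely $\sum_{\hat{0} \le p \le r} \mu(\hat{0},p) = [r = \hat{0}]$, which holds for every $r \in \mathcal{P}$.

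First I would multiply this identity by $x^{\rho(r)}$ and sum over all $r \in \mathcal{P}$. Only $r = \hat{0}$ contributes on the right-hand side, and since $\rho(\hat{0}) = 0$ the total is simply $1$, giving
\[ 1 = \sum_{r \in \mathcal{P}} \,\sum_{\hat{0} \le p \le r} \mu(\hat{0},p)\, x^{\rho(r)}. \]
The finite-type $\mathbb{N}$-graded hypothesis ensures this is a well-defined formal power series and that the rearrangements below are legitimate: the coefficient of each $x^n$ is a finite sum, because there are finitely many $r$ of rank $n$ and each interval $[\hat{0},r]$ is finite.

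Next I would interchange the order of summation, summing first over $p$ and then over all $r \ge p$, i.e. over $r \in V_p$:
\[ 1 = \sum_{p \in \mathcal{P}} \mu(\hat{0},p) \sum_{r \in V_p} x^{\rho(r)}. \]
The crucial step is to evaluate the inner sum using the upho property. Since $V_p \cong \mathcal{P}$, and since any isomorphism of $\mathbb{N}$-graded posets preserves rank (the rank of an element is the common length of the maximal chains from the minimum up to it, an order-theoretic invariant), the rank function induced on $V_p$ is $\rho(\cdot) - \rho(p)$, and the rank generating function of $V_p$ with this induced grading equals $F(\mathcal{P};x)$. Hence $\sum_{r \in V_p} x^{\rho(r)} = x^{\rho(p)} F(\mathcal{P};x)$. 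Substituting this in and factoring $F(\mathcal{P};x)$ out of the sum yields
\[ 1 = F(\mathcal{P};x) \sum_{p \in \mathcal{P}} \mu(\hat{0},p)\, x^{\rho(p)} = F(\mathcal{P};x)\,\chi^*(\mathcal{P};x), \]
which is the desired reciprocity.

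The main point to be careful about — and really the only place the upho hypothesis is used — is the identification $\sum_{r \in V_p} x^{\rho(r)} = x^{\rho(p)} F(\mathcal{P};x)$. One must check that the isomorphism $V_p \cong \mathcal{P}$ is compatible with the grading, i.e. that it shifts every rank down uniformly by $\rho(p)$; this follows because $\rho$ is intrinsic to the poset structure, so an isomorphism carrying the minimum $p$ of $V_p$ to the minimum $\hat{0}$ of $\mathcal{P}$ must send rank-$k$ elements of $V_p$ to rank-$k$ elements of $\mathcal{P}$. Everything else is formal bookkeeping justified by the finite-type condition.
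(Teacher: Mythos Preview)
Your proof is correct and is essentially the standard argument for this identity; it is exactly the incidence-algebra manipulation $\mu * \zeta = \delta$ specialized to rank generating functions, with the upho hypothesis entering precisely where you say it does. Note, however, that the present paper does not actually prove this theorem: it is quoted as background from \cite{hopkins2022note}, so there is no in-paper proof to compare against. Your argument matches the one given in that reference.
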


For upho lattices, we can say more. For an upho lattice~$\mathcal{L}$, we define its \dfn{core} to be $L \coloneqq  [\hat{0},s_1 \vee \cdots \vee s_r]$, the interval from its minimum $\hat{0}$ to the join of its atoms~$s_1,\ldots,s_r$. Notice that the core of an upho lattice is a finite graded lattice. An easy corollary of \cref{thm:upho_gfs} is the following.

\begin{cor}[{\cite[Corollary~6]{hopkins2022note}}] \label{cor:upho_gfs}
If $\mathcal{L}$ is an upho lattice, then $F(\mathcal{L};x) = \chi^*(L;x)^{-1}$, where $L$ is the core of $\mathcal{L}$.
\end{cor}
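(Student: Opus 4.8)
The plan is to derive the corollary directly from \cref{thm:upho_gfs} by showing that $\chi^*(\mathcal{L};x) = \chi^*(L;x)$ for an upho lattice $\mathcal{L}$ with core $L$. Since \cref{thm:upho_gfs} already gives $F(\mathcal{L};x) = \chi^*(\mathcal{L};x)^{-1}$, it suffices to prove that the characteristic generating function of the whole (infinite) upho lattice agrees with the characteristic polynomial of its finite core. The key point is that the M\"{o}bius function $\mu(\hat 0, p)$ vanishes for every element $p$ lying outside the core, so that only the elements of $L$ contribute to the sum defining $\chi^*(\mathcal{L};x)$.

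First I would recall the standard fact (a consequence of Weisner's theorem, or the crosscut/complementation results for the M\"{o}bius function of a lattice) that in a finite lattice, $\mu(\hat 0, p) = 0$ unless $p$ is a join of atoms. I would then transport this to the upho setting: fix $p \in \mathcal{L}$ and consider the interval $[\hat 0, p]$, which is a finite lattice, so $\mu(\hat 0, p)$ depends only on this interval. Hence $\mu(\hat 0, p) = 0$ unless $p$ is the join of some subset of the atoms $s_1, \ldots, s_r$ of $\mathcal{L}$. But every such join is $\leq s_1 \vee \cdots \vee s_r$, so every $p$ with $\mu(\hat 0, p) \neq 0$ already lies in the core $L = [\hat 0, s_1 \vee \cdots \vee s_r]$.

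It then remains to observe two compatibilities. First, for $p \in L$, the value $\mu(\hat 0, p)$ and the rank $\rho(p)$ computed in $\mathcal{L}$ agree with those computed in $L$, since $L$ is an order filter-closed interval containing $\hat 0$ and the M\"{o}bius function is an interval invariant while the grading of $\mathcal{L}$ restricts to the grading of $L$. Consequently
\[
    \chi^*(\mathcal{L};x) = \sum_{p \in \mathcal{L}} \mu(\hat 0, p)\, x^{\rho(p)} = \sum_{p \in L} \mu(\hat 0, p)\, x^{\rho(p)} = \chi^*(L;x).
\]
Combining this with \cref{thm:upho_gfs} yields $F(\mathcal{L};x) = \chi^*(\mathcal{L};x)^{-1} = \chi^*(L;x)^{-1}$, as desired.

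The only mild obstacle is justifying the vanishing of $\mu(\hat 0, p)$ for $p$ not a join of atoms in the lattice setting; this is a classical result, but one must be slightly careful that it applies to each finite interval $[\hat 0, p]$ of the infinite lattice $\mathcal{L}$ rather than to $\mathcal{L}$ itself. Since $\mathcal{L}$ is locally finite and a lattice, each such interval is a finite lattice, so the classical statement applies verbatim, and no genuine difficulty arises. Everything else is bookkeeping about the definitions of $\chi^*$ in the finite and infinite cases.
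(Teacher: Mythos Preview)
Your proposal is correct and follows the standard route: the paper does not spell out a proof here but simply cites \cite[Corollary~6]{hopkins2022note} and calls it an ``easy corollary'' of \cref{thm:upho_gfs}, and your argument---reducing to $\chi^*(\mathcal{L};x)=\chi^*(L;x)$ via the vanishing of $\mu(\hat 0,p)$ for $p$ not a join of atoms---is exactly the intended derivation. No gaps; the appeal to the crosscut/Weisner result on each finite interval $[\hat 0,p]$ is the right move.
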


\Cref{cor:upho_gfs} says the core of an upho lattice determines how quickly it grows. For instance, $\mathbb{N}^n$ is an upho lattice with core $B_n$, and $F(\mathbb{N}^n;x)=\frac{1}{(1-x)^n}=\chi^*(B_n;x)^{-1}$.
However, the core does \emph{not} completely determine the upho lattice, in the sense that a given finite graded lattice can be the core of multiple different upho lattices. For example, there are two different upho lattices with core $B_2$, depicted in \cref{fig:b2_ex}.

For a finite graded lattice $L$, we let $\kappa(L)$ denote the cardinality of the set of different upho lattices with core $L$. Our main interest in this paper is in understanding the function $\kappa(L)$. A priori $\kappa(L)$ could be infinite, even uncountably infinite, for some $L$. But we will provide some reasons to think $\kappa(L)$ might be finite for all $L$.

\subsection{Monoids}

An important source of upho posets are monoids, as has been observed previously in~\cite{gao2020upho, hopkins2024upho1, fu2024upho}. Let us review the connection. For basics on monoids, consult, e.g.,~\cite{dehornoy1999gaussian, dehornoy2015foundations}. We also follow the notation of \cite[\S4]{hopkins2024upho1}, where again these preliminaries on monoids are reviewed in more detail.

Let $M=(M,\cdot)$ be a monoid. We say that~$M$ is \dfn{left-cancellative} if $ab=ac$ implies that~$b=c$ for $a,b,c\in M$. Left-cancellativity is a version of upper homogeneity. But we also need to enforce our finiteness requirements. Let us say that $M$ is \dfn{homogeneously finitely generated} if it has a presentation $M=\langle S \mid R \rangle$ where the set $S$ of generators is finite and where every relation in $R$ is homogeneous, i.e., of the form $w=w'$ with $\ell(w) = \ell(w')$, where~$\ell(w)$ denotes the length of the word~$w$. Finally, we use $\leq_L$ to denote the \dfn{left divisibility} relation on $M$: $a \leq_L b$ for~$a,b \in M$ means that~$b=ac$ for some $c\in M$. 

The next lemma summarizes the connection between monoids and upho posets.

\begin{lemma}[{\cite[Lemma~5.1]{gao2020upho}; see also~\cite{hopkins2024upho1, fu2024upho}}] \label{lem:monoids}
Let $M$ be a left-cancellative, homogeneously finitely generated monoid. Then $(M,\leq_L)$ is a (finite-type $\mathbb{N}$-graded) upho poset.
\end{lemma}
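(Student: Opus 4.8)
The plan is to verify the three defining properties in turn: that $(M,\leq_L)$ is a partial order, that it is finite-type $\mathbb{N}$-graded with a suitable rank function, and that it is upper homogeneous, in that order of increasing difficulty.

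First I would check that $\leq_L$ is a partial order. Reflexivity is immediate since $a = a\cdot 1$, and transitivity follows by composing factorizations: if $b = ac$ and $d = be$ then $d = a(ce)$. The interesting point is antisymmetry, and this is where the hypotheses first come into play. Suppose $a \leq_L b$ and $b \leq_L a$, so $b = ac$ and $a = bd$ for some $c,d \in M$; then $a = acd$, and I want to conclude $a = b$, i.e.\ $c = 1$. Here I would introduce a length function $\ell\colon M \to \mathbb{N}$ induced by the homogeneous presentation $M = \langle S \mid R \rangle$: because every relation in $R$ equates words of equal length, the length of a word is invariant under the relations, so $\ell$ descends to a well-defined monoid homomorphism $\ell\colon M \to (\mathbb{N},+)$. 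Applying $\ell$ to $a = acd$ gives $\ell(a) = \ell(a) + \ell(c) + \ell(d)$, forcing $\ell(c) = \ell(d) = 0$. Since $S$ generates $M$ and the only element of length $0$ is the identity $1$ (no generator has length $0$), this yields $c = d = 1$, hence $a = b$.

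Next I would establish the $\mathbb{N}$-grading, using $\rho \coloneqq \ell$ as the candidate rank function. The grading decomposition is $\mathcal{P}_i \coloneqq \{a \in M \colon \ell(a) = i\}$, with $\hat{0} = 1$ the minimum (since $1 \leq_L b$ for all $b$). Since $\ell$ is a homomorphism and $\ell(a) \leq \ell(b)$ whenever $a \leq_L b$, any cover $a \lessdot b$ satisfies $b = ac$ with $c \neq 1$; I would argue that a cover forces $\ell(c) = 1$, because if $\ell(c) \geq 2$ one can write $c$ as a product of generators and factor out a single generator to interpolate a strictly intermediate element, contradicting the cover. This makes $\ell$ increase by exactly $1$ along covers, giving the required form of maximal chains. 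The finite-type condition $\#\mathcal{P}_i < \infty$ follows because $S$ is finite, so there are at most $\#S^{\,i}$ words of length $i$, and hence only finitely many monoid elements of length $i$.

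The main obstacle, and the heart of the lemma, is verifying upper homogeneity: for each $a \in M$, the principal filter $V_a = \{b \in M \colon a \leq_L b\}$ is isomorphic to $(M, \leq_L)$. The natural candidate map is left multiplication $L_a\colon M \to V_a$, $c \mapsto ac$. Its image is exactly $V_a$ by definition of $\leq_L$, and here is precisely where \textbf{left-cancellativity} is essential: it guarantees $L_a$ is injective, so $L_a$ is a bijection onto $V_a$. It remains to check $L_a$ is an order isomorphism, i.e.\ $c \leq_L c'$ if and only if $ac \leq_L ac'$. The forward direction is easy: if $c' = cd$ then $ac' = (ac)d$. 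The converse is the delicate point and again relies on left-cancellation: if $ac \leq_L ac'$, then $ac' = (ac)d = a(cd)$, and cancelling $a$ on the left gives $c' = cd$, so $c \leq_L c'$. I expect this converse to be the subtle step, since it is exactly the place where the absence of left-cancellativity would break the argument, and I would present it carefully. Once $L_a$ is shown to be an order isomorphism $M \xrightarrow{\sim} V_a$ for every $a$, all defining conditions are met and the proof concludes.
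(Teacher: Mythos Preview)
Your proof is correct and complete. Note, however, that the paper does not actually supply a proof of this lemma: it is quoted from \cite{gao2020upho} (with pointers also to \cite{hopkins2024upho1, fu2024upho}) and used as a black box. So there is no ``paper's own proof'' to compare against here; your argument is the standard one underlying those references. The only small point you leave implicit is that every maximal chain in $(M,\leq_L)$ is infinite, as required by the paper's definition of $\mathbb{N}$-graded; this follows because every $a\in M$ is covered by $as$ for any generator $s\in S$, so no maximal chain terminates.
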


For example, the free commutative monoid $\langle s_1,\ldots,s_n \mid s_is_j = s_js_i \rangle$ satisfies the conditions of \cref{lem:monoids}, and gives us the upho lattice $\mathbb{N}^n$.

\section{Colorable upho posets, automorphisms, and finiteness} \label{sec:color}

In this section we focus on proving that $\kappa(L)$ is finite. Our major result says that~$\kappa(L)$ is finite when the finite lattice $L$ has no nontrivial automorphisms. We use the connection to monoids to prove this result.

It is helpful to recast the monoid construction of upho posets in a slightly more combinatorial framework (cf.~\cite[\S3]{fu2024upho}). An \dfn{upho coloring} of a finite-type $\mathbb{N}$-graded poset~$\mathcal{P}$ is a function $c$ mapping each cover relation of $\mathcal{P}$ to an atom of $\mathcal{P}$, such that:
\begin{itemize}
\item $c(\hat{0}\lessdot s) = s$ for every atom $s \in \mathcal{P}$, and
\item for each $p \in \mathcal{P}$, there is an isomorphism $\varphi_p\colon V_p \to \mathcal{P}$ which is \dfn{color-preserving} in the sense that $c(\varphi_p(x) \lessdot \varphi_p(y))=c(x \lessdot y)$ for all $x \lessdot y\in V_p$.
\end{itemize}
Of course, if $\mathcal{P}$ has an upho coloring $c$, then it is an upho poset. Let us call a $\mathcal{P}$ together with such a $c$ a \dfn{colored upho poset}, and call an upho poset $\mathcal{P}$ \dfn{colorable} if it admits such a $c$.

The colorable upho posets are exactly those coming from monoids.

\begin{lemma}[{\cite[Corollary~3.6]{fu2024upho}}] \label{lem:monoids_colors}
There is a bijective correspondence between left-cancellative, homogeneously finitely generated monoids $M$ and colored upho posets~$\mathcal{P}$. Given such a monoid $M$, we let~$\mathcal{P} \coloneqq (M,\leq_L)$ as in \cref{lem:monoids}, with the coloring given by $c(x\lessdot y) \coloneqq s$ if $y=xs$. Conversely, given such a colored upho poset $\mathcal{P}$, the associated monoid $M$ has presentation
\[ M \coloneqq \left\langle s_1,\ldots,s_r\mid \parbox{3.25in}{\begin{center} $c(\hat{0}=x_0\lessdot x_1)c(x_1\lessdot x_2)\cdots c(x_{k-1}\lessdot x_k = p) =$ \\ $c(\hat{0}=y_0\lessdot y_1)c(y_1\lessdot y_2)\cdots c(y_{k-1}\lessdot y_k = p)$ \end{center} }\right\rangle\]
where the generators are the atoms $s_1,\ldots,s_r$ of $\mathcal{P}$, and the relations correspond to all pairs of saturated chains $\hat{0}=x_0\lessdot x_1\lessdot \cdots \lessdot x_k=p$, $\hat{0}=y_0\lessdot y_1\lessdot \cdots \lessdot y_k=p$ from $\hat{0}$ to any $p \in \mathcal{P}$.
\end{lemma}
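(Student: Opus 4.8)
The plan is to interpret both constructions through a single ``colored-path evaluation'' map and then verify the two passages are mutually inverse. First I would handle the forward direction, from a monoid $M$ to a colored upho poset. By \cref{lem:monoids} we already know $(M,\leq_L)$ is a finite-type $\mathbb{N}$-graded upho poset, with rank equal to word length and atoms equal to the generators $s_1,\dots,s_r$. A cover $x\lessdot y$ means $y=xs$ for a unique generator $s$ (uniqueness being exactly left-cancellativity), so $c(x\lessdot y)\coloneqq s$ is well defined, and clearly $c(\hat 0\lessdot s)=s$. For color-preservation I would use the left-division map $\varphi_p\colon V_p\to M$, $pa\mapsto a$, which is a bijection by left-cancellativity and an order isomorphism: if $x=pa$, $y=pb$, and $y=xs$, then $pb=pas$ forces $b=as$, so $c(a\lessdot b)=s=c(x\lessdot y)$. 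Hence $(M,\leq_L,c)$ is a colored upho poset.

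For the reverse direction I would first record the crucial structural fact: in any colored upho poset, each element $p$ has, for each atom $s$, exactly one upward cover colored $s$. This follows by transporting the analogous statement at $\hat 0$ (whose covers are the atoms, colored bijectively by themselves) through the color-preserving isomorphism $\varphi_p\colon V_p\to\mathcal P$. Consequently each word $w=s_{i_1}\cdots s_{i_k}\in S^\ast$ traces a unique saturated chain from $\hat 0$, defining a length-preserving \emph{evaluation} map $\mathrm{ev}\colon S^\ast\to\mathcal P$; it is surjective because $\mathbb{N}$-gradedness gives every element a saturated chain down to $\hat 0$. The listed relations of the presentation are precisely the pairs $(w,w')$ with $\mathrm{ev}(w)=\mathrm{ev}(w')$, and they are homogeneous since $\mathrm{ev}$ preserves length.

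The heart of the argument is to show that the relation $w\sim w'\iff \mathrm{ev}(w)=\mathrm{ev}(w')$ is already a monoid congruence, so that $M=S^\ast/\!\sim$ and $\mathrm{ev}$ induces a bijection $M\xrightarrow{\ \sim\ }\mathcal P$. Writing $\psi_p\coloneqq\varphi_p^{-1}\colon\mathcal P\to V_p$, color-preservation yields the key identity $\mathrm{ev}(uw)=\psi_{\mathrm{ev}(u)}(\mathrm{ev}(w))$, since tracing $w$ starting from $\mathrm{ev}(u)$ is the $\psi_{\mathrm{ev}(u)}$-image of tracing $w$ from $\hat 0$. Right-compatibility of $\sim$ is immediate (appending $v$ traces $v$ from the common endpoint), and left-compatibility follows from this identity; moreover, since each $\psi_p$ is a bijection, $\mathrm{ev}(uw)=\mathrm{ev}(uw')$ implies $\mathrm{ev}(w)=\mathrm{ev}(w')$, which gives left-cancellativity of $M$. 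Transporting the multiplication along $\mathrm{ev}$ makes $\mathcal P$ into a monoid via $p\cdot q=\psi_p(q)$ with identity $\hat 0$.

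Finally I would check the two constructions are mutually inverse. Starting from a monoid and returning, $\mathrm{ev}$ is exactly the canonical projection $S^\ast\to M$, so its kernel congruence is the defining one and we recover $M$ together with its generators. Starting from a colored upho poset and returning, the equivalence $b=\psi_a(c)\iff a\leq b$ shows that $\leq_L$ on the reconstructed monoid matches $\leq$ on $\mathcal P$, while the color of a cover $a\lessdot\psi_a(s_i)$ is $s_i$ by color-preservation, so we recover $(\mathcal P,c)$ as a colored poset. I expect the main obstacle to be the congruence verification of the previous paragraph --- specifically, extracting the identity $\mathrm{ev}(uw)=\psi_{\mathrm{ev}(u)}(\mathrm{ev}(w))$ cleanly from the color-preserving isomorphisms --- since left-compatibility, left-cancellativity, and both halves of the bijection all flow from it.
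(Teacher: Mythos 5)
Your proof is correct, but it takes a more self-contained route than the paper, whose proof of this lemma is essentially a citation plus a translation. The paper defers the substance to \cite{fu2024upho}, where the monoid attached to a colored upho poset is defined concretely --- underlying set $\mathcal{P}$, product $pq = \varphi_p^{-1}(q)$, after observing that the color-preserving isomorphisms $\varphi_p$ are uniquely determined --- and the paper's own contribution is only the dictionary between that definition and the stated presentation: identify each $p \in \mathcal{P}$ with the color word of any saturated chain from $\hat{0}$ to $p$, the relations guaranteeing that the choice of chain is immaterial. Your argument proves the whole correspondence from first principles. The skeletons agree --- your transported product $p \cdot q = \psi_p(q)$ is exactly the cited definition, and your evaluation map $\mathrm{ev}$ is the paper's chain-word identification --- but your proof supplies rigorously what the paper's sketch leaves implicit: that $\ker(\mathrm{ev})$ is already a monoid congruence (via the identity $\mathrm{ev}(uw) = \psi_{\mathrm{ev}(u)}(\mathrm{ev}(w))$), so the displayed presentation genuinely presents $S^*/\ker(\mathrm{ev}) \cong \mathcal{P}$; left-cancellativity then falls out of bijectivity of the $\psi_p$'s, and uniqueness of the $\varphi_p$'s emerges as a byproduct rather than being assumed. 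The paper's route buys brevity and an explicit pointer to the literature; yours buys a complete proof that could stand without the citation. The one elision you share with the paper is treating the atoms of $(M,\leq_L)$ as identical to the generators $s_1,\ldots,s_r$, which tacitly assumes distinct generators remain distinct in $M$; with homogeneous relations this is harmless.
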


\begin{proof}
As indicated, this is essentially proved in \cite[Corollary~3.6]{fu2024upho}. However, rather than define the monoid $M$ corresponding to a colored upho poset $\mathcal{P}$ by generators and relations, the authors there define $M$ in a slightly different way, as we now explain. First, they observe that, for a fixed upho coloring $c$ of $\mathcal{P}$, the color-preserving isomorphisms $\varphi_p\colon V_p \to \mathcal{P}$ for $p \in \mathcal{P}$ are uniquely determined. Then, they define the monoid~$M$ to have as its set of elements the elements of $\mathcal{P}$, with product given by $pq = \varphi_p^{-1}(q)$ for $p,q \in \mathcal{P}$. But we can see that their definition of $M$ is equivalent to the one we gave above in terms of generators and relations by identifying each element $p \in \mathcal{P}$ with the product $c(x_0\lessdot x_1)c(x_1\lessdot x_2)\cdots c(x_{k-1}\lessdot x_k)$ for any saturated chain $\hat{0}=x_0\lessdot x_1\lessdot \cdots \lessdot x_k=p$ from $\hat{0}$ to $p$. The relations mean that the choice of saturated chain does not matter, because all possible choices are identified with each other.
\end{proof}

\begin{remark} \label{rem:monoid_corresp}
From now on, we will frequently use the correspondence between the elements of a colored upho poset $\mathcal{P}$ and the elements of its associated monoid~$M$: the element $p \in \mathcal{P}$ in the poset corresponds to the element $c(x_0\lessdot x_1)\cdots c(x_{k-1}\lessdot x_k) \in M$ in the monoid, where $\hat{0}=x_0\lessdot x_1 \lessdot \cdots \lessdot x_k = p$ is any maximal chain from $\hat{0}$ to $p$.
\end{remark}

In the case of lattices, we can substantially reduce the set of relations we need.

\begin{lemma} \label{lem:monoids_colors_lat}
Let $\mathcal{L}$ be a colored upho lattice. Then the monoid $M$ associated to it by \cref{lem:monoids_colors} is
\begin{gather} \label{eq:monoid}
M = \left\langle s_1,\ldots,s_r\mid \parbox{3.6in}{\begin{center} $c(\hat{0}=x_0\lessdot x_1)c(x_1\lessdot x_2)\cdots c(x_{k-1}\lessdot x_k = x_1 \vee y_1) =$ \\ $c(\hat{0}=y_0\lessdot y_1)c(y_1\lessdot y_2)\cdots c(y_{k-1}\lessdot y_k = x_1 \vee y_1)$ \end{center} }\right\rangle
\end{gather}
where the generators are the atoms $s_1,\ldots,s_r$ of $\mathcal{L}$, and the relations correspond to all pairs of saturated chains $\hat{0}=x_0\lessdot x_1\lessdot \cdots \lessdot x_k=x_1 \vee y_1$, $\hat{0}=y_0\lessdot y_1\lessdot \cdots \lessdot y_k=x_1 \vee y_1$ from $\hat{0}$ to the join $x_1 \vee y_1$ of two atoms $x_1,y_1$ of $\mathcal{L}$.
\end{lemma}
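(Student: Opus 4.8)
The plan is to show that the large presentation from \cref{lem:monoids_colors}, which uses relations coming from \emph{all} elements $p \in \mathcal{L}$, is equivalent to the smaller presentation in \eqref{eq:monoid}, which only uses relations coming from joins of pairs of atoms. Since \eqref{eq:monoid} clearly imposes a subset of the relations of the full presentation, it suffices to prove that every relation of the full presentation is a consequence of the relations in \eqref{eq:monoid}. Concretely, I would fix an element $p \in \mathcal{L}$ and two saturated chains from $\hat{0}$ to $p$, and show that the two words they produce are equal in the monoid presented by \eqref{eq:monoid}. I will argue by induction on the rank $k = \rho(p)$.

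For the inductive step, the key is to use the lattice structure to reduce to the bottom of the chains. Given the two saturated chains $\hat{0} = x_0 \lessdot x_1 \lessdot \cdots \lessdot x_k = p$ and $\hat{0} = y_0 \lessdot y_1 \lessdot \cdots \lessdot y_k = p$, the first genuine steps pick out atoms $x_1$ and $y_1$. If $x_1 = y_1$, then both words begin with the same color $c(\hat{0} \lessdot x_1)$, and by left-cancellativity (equivalently, by working inside the filter $V_{x_1} \cong \mathcal{L}$ via the color-preserving isomorphism $\varphi_{x_1}$) we may strip off this common first letter and apply induction to the pair of chains from $x_1$ up to $p$ of rank $k-1$. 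The substantive case is $x_1 \neq y_1$. Here I would pass to the join $x_1 \vee y_1$, which is exactly the kind of element whose relations are included in \eqref{eq:monoid}. Since $\mathcal{L}$ is a lattice, $x_1 \vee y_1$ exists and lies below $p$ (as both $x_1, y_1 \leq p$). I would choose a saturated chain from $\hat{0}$ through $x_1$ up to $x_1 \vee y_1$, and another from $\hat{0}$ through $y_1$ up to $x_1 \vee y_1$; the relation \eqref{eq:monoid} equates the two resulting prefixes. Then I would extend a single fixed saturated chain from $x_1 \vee y_1$ up to $p$ and append it to both. The two original chains are now connected to two new chains that agree from $x_1 \vee y_1$ onward; by induction (or by repeated application of the $x_1 = y_1$ case within the filters $V_{x_1}$ and $V_{y_1}$) the original chain through $x_1$ is equated with the new one through $x_1$, and likewise for $y_1$, so transitivity through the \eqref{eq:monoid} relation at $x_1 \vee y_1$ finishes the step.

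I expect the main obstacle to be the bookkeeping in the $x_1 \neq y_1$ case: one must carefully check that the chains from $x_1$ (respectively $y_1$) up to $p$, whether routed directly or routed through $x_1 \vee y_1$, can be compared by the inductive hypothesis, and that the word produced by concatenation genuinely matches the word read off the original chain. The cleanest way to handle this is to phrase the induction as a statement about \emph{all} pairs of saturated chains between a common bottom $b$ and common top $p$ of a given rank difference, rather than fixing the bottom at $\hat{0}$; the color-preserving isomorphisms $\varphi_p$ (which are uniquely determined, as noted in the proof of \cref{lem:monoids_colors}) guarantee that the coloring restricted to any filter $V_p$ looks like the coloring on $\mathcal{L}$, so the inductive hypothesis can be applied inside filters without loss of generality. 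One subtlety to verify explicitly is that the relations in \eqref{eq:monoid} are stated for chains up to $x_1 \vee y_1$ where $x_1$ and $y_1$ are the specific atoms appearing as the first steps; I would confirm that letting $x_1, y_1$ range over all atoms and over all saturated chains to their join captures exactly the reductions needed, so that no join of two atoms is omitted.
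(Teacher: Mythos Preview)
Your proposal is correct and follows essentially the same approach as the paper: induction on rank (equivalently, word length), splitting into the case where the two chains begin with the same atom (strip it off and recurse) versus distinct atoms $x_1 \neq y_1$ (route both chains through $x_1 \vee y_1$, use the defining relation of \eqref{eq:monoid} there, and invoke induction on the shorter tails). The paper carries this out in the language of words in the monoid $M$, using left-cancellativity of $M$ directly rather than pulling back to filters $V_{x_1}$ via $\varphi_{x_1}$, but the two formulations are interchangeable and the structure of the argument is the same.
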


\begin{proof}

Let us use $M$ to denote the monoid associated to our colored upho lattice~$\mathcal{L}$ by \cref{lem:monoids_colors}, and let us use $M'$ to denote the monoid with presentation the right-hand side of~\eqref{eq:monoid}. Our goal is to show that $M$ and $M'$ are the same. 

Let $S = \{s_1,\ldots,s_r\}$ be the set of atoms of $\mathcal{L}$, and let $S^*$ denote the set of words over the alphabet $S$. For two words $w_1,w_2 \in S^*$, we write $w_1 = w_2$ if~$w_1$ and $w_2$ are equal when viewed as elements in $M$, and similarly write $w_1 =' w_2$ for equality in $M'$. We want to show that for any $w_1,w_2 \in S^*$, we have $w_1 = w_2$ if and only if~$w_1 =' w_2$. Since the defining relations of $M'$ are a subset of the defining relations of $M$, if $w_1 =' w_2$ then clearly $w_1 = w_2$.

So now suppose $w_1,w_2 \in S^*$ satisfy $w_1 = w_2$ in $M$. Because the relations defining~$M$ are homogeneous, $w_1$ and $w_2$ have the same length $n \geq 0$. We prove that $w_1 =' w_2$ by induction on $n$. If $n = 0$ the claim is trivial, so suppose $n > 0$. If for some $s \in S$ we have $w_1 = su_1$ and $w_2 = su_2$, then $u_1 = u_2$ in $M$ because $M$ is left-cancellative; then by induction $u_1 =' u_2$, and so $w_1 =' w_2$.  Otherwise, suppose without loss of generality that the word $w_1$ starts with the atom $s_1$, and $w_2$ starts with $s_2$. Let $x \coloneqq s_1 \vee s_2\in \mathcal{L}$.  Let $u_1, u_2 \in S^*$ be words which when viewed as elements of $M$ are identified with $x \in \mathcal{L}$ (as in Remark~\ref{rem:monoid_corresp}), and are such that~$u_1$ starts with $s_1$ while~$u_2$ starts with $s_2$. Certainly $u_1=u_2$. But also, because the relations in $M'$ equate words corresponding to saturated chains from two atoms to their join, we in fact have $u_1 =' u_2$. Now let $y \in \mathcal{L}$ be the element identified with $w_1=w_2\in M$. Since~$y$ is an upper bound for $s_1$ and $s_2$, and $x$ is their least upper bound, we have that~$x \leq y$. Thus, we can find a word $v \in S^*$ so that $u_1v = u_2v \in M$ is identified with $y \in \mathcal{L}$. Then $w_1 = u_1v$, and both of these words start with $s_1$. Since $M$ is left-cancellative, we get an equality $\overline{w}_1 = \overline{u}_1v$, where $\overline{w}_1$ is the result of removing the initial $s_1$ from~$w_1$ and $\overline{u}_1$ is the result of removing the initial $s_1$ from $u_1$. But notice that the equality~$\overline{w}_1 = \overline{u}_1v$ involves words of length $n-1$, so by induction we have that~$\overline{w}_1 =' \overline{u}_1v$, which then implies $w_1='u_1v$. A symmetric argument shows that~$w_2='u_2v$. Finally, recalling $u_1='u_2$, we conclude $w_1='w_2$, as desired.
\end{proof}

From~\cref{lem:monoids_colors_lat} we easily conclude the following.

\begin{cor} \label{cor:monoid_lat_finiteness}
Each finite graded lattice is the core of finitely many colorable upho lattices.
\end{cor}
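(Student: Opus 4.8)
The plan is to leverage \cref{lem:monoids_colors_lat} to show that a colorable upho lattice with core $L$ is encoded by a finite amount of data, namely a coloring of the cover relations of $L$ itself. Fix a finite graded lattice $L$, and let $\mathcal{L}$ be any colorable upho lattice with core $L$, equipped with some upho coloring $c$. By \cref{lem:monoids_colors}, the colored upho lattice $(\mathcal{L},c)$ corresponds bijectively to a left-cancellative, homogeneously finitely generated monoid $M$, and $(\mathcal{L},c)$ can be recovered from $M$. So it is enough to bound the number of such monoids $M$ that can occur.

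The key observation is that the entire presentation \eqref{eq:monoid} of $M$ lives inside the core. First, the atoms of $\mathcal{L}$ are exactly the atoms of $L$: each atom $s_i$ satisfies $\hat{0} \lessdot s_i \leq s_1 \vee \cdots \vee s_r$, so $s_i$ lies in $L=[\hat{0},s_1\vee\cdots\vee s_r]$ and is an atom there. Thus the generating set of $M$ is intrinsic to $L$. Next, every relation in \eqref{eq:monoid} comes from a pair of saturated chains from $\hat{0}$ to a join $x_1 \vee y_1$ of two atoms; since $x_1 \vee y_1 \leq s_1 \vee \cdots \vee s_r$, the whole interval $[\hat{0}, x_1\vee y_1]$ sits inside $L$, so these chains and their cover relations all lie in $L$. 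Hence the words appearing in the relations depend only on the restriction of $c$ to the cover relations of $L$. I would conclude that the presentation of $M$, and therefore $M$ and $(\mathcal{L},c)$ themselves, are determined by the pair $(L, c|_L)$, where $c|_L$ denotes the restriction of the coloring to covers of $L$.

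Finiteness is then immediate: since $L$ is finite, $c|_L$ is a function from the finite set of cover relations of $L$ to the finite set of atoms of $L$, so there are only finitely many choices for it. Each choice yields at most one monoid $M$, hence at most one colored upho lattice, and hence (forgetting the coloring) at most one colorable upho lattice with core $L$. Therefore $L$ is the core of only finitely many colorable upho lattices. The substance of the proof is really all in \cref{lem:monoids_colors_lat}: the one nonroutine step is knowing that $M$ can be presented using \emph{only} the relations localized to the core, so that the infinitely many relations coming from arbitrary joins in $\mathcal{L}$ are redundant. Granting that localization, the corollary reduces to counting colorings of a finite set of edges, which I expect to be entirely routine.
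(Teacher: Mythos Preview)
Your proof is correct and follows essentially the same approach as the paper's: both use \cref{lem:monoids_colors_lat} to conclude that a colored upho lattice is determined by the restriction of its coloring to the core, and then invoke the finiteness of the set of such colorings. Your version is simply a more detailed unpacking of the paper's two-sentence argument.
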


\begin{proof}
\Cref{lem:monoids_colors_lat} implies that if $\mathcal{L}$ is a colored upho lattice with core~$L$, then the way all of $\mathcal{L}$ is colored is determined by the way its core $L$ is colored. And of course there are only finitely many ways to color the finite poset $L$.
\end{proof}

But how to construct upho colorings? Let $\mathcal{P}$ be an upho poset. A \dfn{system of isomorphisms} for $\mathcal{P}$ is a collection of isomorphisms $\varphi_p\colon V_p \to \mathcal{P}$ for each $p \in \mathcal{P}$. (Such systems always exist, by the definition of upper homogeneity.) Let us say that a system of isomorphisms is \dfn{compatible} if
\begin{equation} \label{eq:compatibility}
\varphi_q = \varphi_{\varphi_p(q)} \circ (\varphi_p\big|_{V_q})
\end{equation}
for each $p \leq q \in \mathcal{P}$. Compatible systems of isomorphisms give us upho colorings.

\begin{lemma} \label{lem:compatible_coloring}
An upho poset is colorable if and only if it has a compatible system of isomorphisms.
\end{lemma}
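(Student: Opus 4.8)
The plan is to prove both directions separately. The forward direction is essentially immediate from the definition of colorable, so the real content is the converse: constructing an upho coloring out of a compatible system of isomorphisms $\{\varphi_p\}$. Given such a system, I would define a coloring $c$ on cover relations as follows. For a cover $p \lessdot q$ in $\mathcal{P}$, apply the isomorphism $\varphi_p \colon V_p \to \mathcal{P}$; since $p \lessdot q$ lives in $V_p$ and $\varphi_p(p) = \hat{0}$, the image $\hat{0} = \varphi_p(p) \lessdot \varphi_p(q)$ is a cover of $\hat{0}$ in $\mathcal{P}$, i.e.\ an atom. So I set $c(p \lessdot q) \coloneqq \varphi_p(q)$, the atom of $\mathcal{P}$ to which $q$ is sent by the isomorphism out of $V_p$. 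I would first check the easy normalization axiom: for an atom $s$, we have $\varphi_{\hat{0}} = \mathrm{id}$ (this should follow from compatibility, or can be arranged), so $c(\hat{0} \lessdot s) = s$.

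The heart of the argument is verifying the color-preserving condition, namely that each $\varphi_p$ itself satisfies $c(\varphi_p(x) \lessdot \varphi_p(y)) = c(x \lessdot y)$ for every cover $x \lessdot y$ in $V_p$. Unwinding the definition of $c$, the left-hand side is $\varphi_{\varphi_p(x)}(\varphi_p(y))$ and the right-hand side is $\varphi_x(y)$; note $x \in V_p$ so $p \leq x$, and the covered element under $x \lessdot y$ has order filter $V_x \subseteq V_p$. The compatibility relation \eqref{eq:compatibility}, applied with the pair $p \leq x$, reads $\varphi_x = \varphi_{\varphi_p(x)} \circ (\varphi_p|_{V_x})$. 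Evaluating both sides at $y \in V_x$ gives exactly $\varphi_x(y) = \varphi_{\varphi_p(x)}(\varphi_p(y))$, which is precisely the desired equality $c(x \lessdot y) = c(\varphi_p(x) \lessdot \varphi_p(y))$. So the color-preserving property of each $\varphi_p$ is essentially a restatement of compatibility; this is the step that makes the lemma work, and it is where I expect the bookkeeping (confirming that $\varphi_p$ restricts to an isomorphism $V_x \to V_{\varphi_p(x)}$ sending covers to covers) must be done carefully.

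The main obstacle, and the only genuinely subtle point, is that the coloring $c$ must be \emph{well-defined} as a function of cover relations, and in particular consistent across the whole poset: I must confirm that the isomorphisms $\varphi_p$ built into the definition of $c$ are the \emph{same} isomorphisms that witness the color-preserving property in the definition of an upho coloring. Here compatibility does the work, since it guarantees the $\varphi_p$ interlock coherently rather than being chosen independently. I would also double-check the base case $\varphi_{\hat 0} = \mathrm{id}$ (plugging $p = q = \hat 0$, or $p = \hat 0$ with arbitrary $q$, into \eqref{eq:compatibility}), since the normalization axiom $c(\hat 0 \lessdot s) = s$ depends on it; if the given system does not already satisfy $\varphi_{\hat 0} = \mathrm{id}$, one can postcompose every $\varphi_p$ with $\varphi_{\hat 0}^{-1}$ to arrange it without disturbing compatibility. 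Once well-definedness and the base case are secured, the verification above completes the converse, and together with the trivial forward direction this proves the lemma.
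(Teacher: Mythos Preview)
Your proposal is correct and follows exactly the paper's approach: the forward direction uses the certifying isomorphisms (which, as the paper notes in the proof of \cref{lem:monoids_colors}, are uniquely determined by the coloring and hence compatible), and for the converse you define $c(x\lessdot y)\coloneqq \varphi_x(y)$, which is precisely the paper's construction. Your write-up is considerably more detailed than the paper's two-sentence proof---in particular your verification that compatibility at $p\le x$ evaluated at $y$ gives color-preservation, and your observation that compatibility forces $\varphi_{\hat 0}=\mathrm{id}$, spell out steps the paper leaves implicit---but the argument is the same.
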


\begin{proof}
Given a colored upho poset $\mathcal{P}$, the system of isomorphisms certifying that its coloring $c$ is upho must be compatible. Conversely, given a compatible system of isomorphisms $\varphi_p$ for $\mathcal{P}$, the coloring $c(x \lessdot y) \coloneqq  \varphi_{x}(y)$ is upho.
\end{proof}

\begin{figure}
    \begin{center}
    \begin{tikzpicture}[scale=0.75]
    \node[circle, inner sep=1pt, fill=black] (1) at (0,0) {};
    \node[circle, inner sep=1pt, fill=black, label=180:{\color{red}$s_1$}] (2) at (-1,1) {};
    \node[circle, inner sep=1pt, fill=black, label=0:{\color{blue}$s_2$}] (3) at (1,1) {};
    \node[circle, inner sep=1pt, fill=black] (4) at (-2,2) {};
    \node[circle, inner sep=1pt, fill=black] (5) at (0,2) {};
    \node[circle, inner sep=1pt, fill=black] (6) at (2,2) {};
    \node[circle, inner sep=1pt, fill=black] (7) at (-3,3) {};
    \node[circle, inner sep=1pt, fill=black] (8) at (-1,3) {};
    \node[circle, inner sep=1pt, fill=black] (9) at (1,3) {};
    \node[circle, inner sep=1pt, fill=black] (10) at (3,3) {};
    \node[circle, inner sep=1pt, fill=black] (11) at (-4,4) {};
    \node[circle, inner sep=1pt, fill=black] (12) at (-2,4) {};
    \node[circle, inner sep=1pt, fill=black] (13) at (0,4) {};
    \node[circle, inner sep=1pt, fill=black] (14) at (2,4) {};
    \node[circle, inner sep=1pt, fill=black] (15) at (4,4) {};
    \draw[red,thick] (1) -- (2) -- (4) -- (7) -- (11);
    \draw[red,thick] (3) -- (5) -- (8) -- (12);
    \draw[red,thick] (6) -- (9) -- (13);
    \draw[red,thick] (10) -- (14);
    \draw[blue,thick] (1) -- (3) -- (6) -- (10) -- (15);
    \draw[blue,thick] (2) -- (5) -- (9) -- (14);
    \draw[blue,thick] (4) -- (8) -- (13);
    \draw[blue,thick] (7) -- (12);
    \end{tikzpicture} 
    \qquad \vrule \qquad 
    \begin{tikzpicture}[scale=0.75]
    \node[circle, inner sep=1pt, fill=black] (1) at (0,0) {};
    \node[circle, inner sep=1pt, fill=black, label=180:{\color{red}$s_1$}] (2) at (-1,1) {};
    \node[circle, inner sep=1pt, fill=black, label=0:{\color{blue}$s_2$}] (3) at (1,1) {};
    \node[circle, inner sep=1pt, fill=black] (4) at (-2,2) {};
    \node[circle, inner sep=1pt, fill=black] (5) at (0,2) {};
    \node[circle, inner sep=1pt, fill=black] (6) at (2,2) {};
    \node[circle, inner sep=1pt, fill=black] (7) at (-3,3) {};
    \node[circle, inner sep=1pt, fill=black] (8) at (-1,3) {};
    \node[circle, inner sep=1pt, fill=black] (9) at (1,3) {};
    \node[circle, inner sep=1pt, fill=black] (10) at (3,3) {};
    \node[circle, inner sep=1pt, fill=black] (11) at (-4,4) {};
    \node[circle, inner sep=1pt, fill=black] (12) at (-2,4) {};
    \node[circle, inner sep=1pt, fill=black] (13) at (0,4) {};
    \node[circle, inner sep=1pt, fill=black] (14) at (2,4) {};
    \node[circle, inner sep=1pt, fill=black] (15) at (4,4) {};
    \draw[red,thick] (1) -- (2) -- (5) -- (9) -- (13);
    \draw[blue,thick] (1) -- (3) -- (6) -- (10) -- (15);
    \draw[red,thick] (3) -- (5);
    \draw[blue,thick] (2) -- (4) -- (7) -- (11);
    \draw[red,thick] (4) -- (9) -- (6);
    \draw[blue,thick] (5) -- (8) -- (12);
    \draw[red,thick] (7) -- (13) -- (8) -- (13) -- (10);
    \draw[blue,thick] (9) -- (14);
    \end{tikzpicture}
    \end{center}
    \caption{Two different colored upho lattices with core $B_2$, as in \cref{ex:bool_upho1} and \cref{ex:bool_upho2}. Also, compare \cref{fig:b2_ex}.} \label{fig:b2_ex_color}
\end{figure}
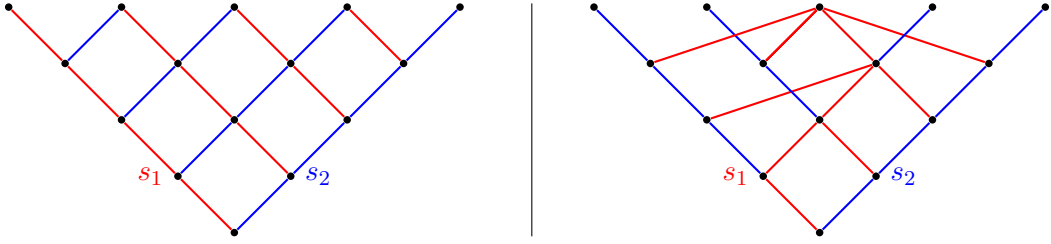

\begin{example} \label{ex:bool_upho1}
Let $n\geq 1$ and let $\mathcal{L} = \mathbb{N}^n$. Then $\mathcal{L}$ is an upho lattice, with core~$B_n$, and $\varphi_{(x_1,\ldots,x_n)}(y_1,\ldots,y_n) = (y_1-x_1,\ldots,y_n-x_n)$ gives a compatible system of isomorphisms for this $\mathcal{L}$. The corresponding monoid is the free commutative monoid as we have seen, i.e., $M = \langle s_1,\ldots,s_n\mid s_is_j = s_js_i \, \textrm{ for } \, 1\leq i < j \leq n\rangle$. For~$n=2$, the colored upho lattice $\mathcal{L}$ is depicted on the left in \cref{fig:b2_ex_color}. 
\end{example}

\begin{example} \label{ex:bool_upho2}
Let $n\geq 1$ and let $\mathcal{L} = \{\textrm{finite } \, S \subseteq \{1,2,\ldots\}\colon \max(S) < \#S + n\}$ (with the convention $\max(\varnothing)=0$), partially ordered by inclusion. Then $\mathcal{L}$ is another upho lattice with core~$B_n$: see~\cite[Remark~8]{hopkins2022note} and~\cite[\S3.5.1]{hopkins2024upho1}. A compatible system of isomorphisms for this $\mathcal{L}$ is~$\varphi_S(T) = f_S(T\setminus S)$ where $f_S \colon \{1,2,\ldots\}\setminus S \to \{1,2,\ldots\}$ is the unique order-preserving bijection. And the corresponding monoid is then $M = \langle s_1,\ldots,s_n\mid s_is_{j-1} = s_js_i \, \textrm{ for } \, 1\leq i < j \leq n\rangle$. Compare this monoid to the one in \cref{ex:bool_upho1}. For~$n=2$, the colored upho lattice~$\mathcal{L}$ is depicted on the right in \cref{fig:b2_ex_color}.
\end{example}

\begin{remark}
The upho lattices from~\cref{ex:bool_upho2} belong to a more general construction for any \emph{uniform sequence of supersolvable geometric lattices}: see~\cite[\S3]{hopkins2024upho1}. These sequences include not just Boolean lattices, but also subspace lattices, partition lattices, etc. The conditions which go into the definition of ``uniform sequence'' imply the resulting systems of isomorphisms are compatible. Therefore, all the upho lattices in~\cite[\S3]{hopkins2024upho1} come from monoids. Also, because of semimodularity, all the monoids produced in this way will in fact have quadratic defining relations.
\end{remark}

So when does an upho poset $\mathcal{P}$ has a \emph{compatible} system of isomorphisms? It turns out that $\mathcal{P}$ having no nontrivial automorphisms is enough to guarantee this.

\begin{cor} \label{cor:colorability}
If an upho poset has no nontrivial automorphisms, then it is colorable.
\end{cor}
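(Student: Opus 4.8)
The plan is to deduce this directly from \cref{lem:compatible_coloring}, which reduces colorability to the existence of a \emph{compatible} system of isomorphisms. So let $\mathcal{P}$ be an upho poset with no nontrivial automorphisms, and for each $p \in \mathcal{P}$ choose an isomorphism $\varphi_p\colon V_p \to \mathcal{P}$; by upper homogeneity such a choice exists. The crucial first observation is that, under our hypothesis, this choice is in fact \emph{forced}: if $\varphi_p$ and $\varphi_p'$ are both isomorphisms $V_p \to \mathcal{P}$, then $\varphi_p' \circ \varphi_p^{-1}$ is an automorphism of $\mathcal{P}$, hence the identity, so $\varphi_p = \varphi_p'$. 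In other words, when $\mathcal{P}$ has no nontrivial automorphisms there is a \emph{unique} isomorphism $V_p \to \mathcal{P}$ for every $p$, and likewise a unique isomorphism between any two principal order filters of $\mathcal{P}$ (since all of them are isomorphic to $\mathcal{P}$).

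With uniqueness in hand, I would verify the compatibility condition~\eqref{eq:compatibility} essentially for free. Fix $p \leq q$ in $\mathcal{P}$. The point is that both sides of~\eqref{eq:compatibility} are isomorphisms with the same source and target. On the one hand, $\varphi_q\colon V_q \to \mathcal{P}$ is an isomorphism by definition. On the other hand, since $q \geq p$ we have $V_q \subseteq V_p$, and the order isomorphism $\varphi_p$ carries the principal filter $V_q$ onto the principal filter $V_{\varphi_p(q)}$; thus $\varphi_p\big|_{V_q}\colon V_q \to V_{\varphi_p(q)}$ is an isomorphism, and postcomposing with $\varphi_{\varphi_p(q)}\colon V_{\varphi_p(q)} \to \mathcal{P}$ yields another isomorphism $V_q \to \mathcal{P}$. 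By the uniqueness established above, these two isomorphisms $V_q \to \mathcal{P}$ must coincide, which is exactly the identity $\varphi_q = \varphi_{\varphi_p(q)} \circ (\varphi_p\big|_{V_q})$. Hence the (unique) system of isomorphisms is compatible, and \cref{lem:compatible_coloring} gives that $\mathcal{P}$ is colorable.

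There is no serious obstacle here: the no-automorphism hypothesis does all the work by collapsing the compatibility equation into an equality of two maps that are forced to agree. The only small point requiring care is the claim that $\varphi_p$ restricts to an isomorphism $V_q \to V_{\varphi_p(q)}$, i.e. that an order isomorphism sends principal filters to principal filters; this is routine, following from the equivalence $x \geq q \iff \varphi_p(x) \geq \varphi_p(q)$ together with the fact that $V_q = \{x \in V_p : x \geq q\}$ because $q \geq p$. It is worth remarking that this argument shows something slightly stronger, namely that an upho poset without nontrivial automorphisms admits a \emph{unique} upho coloring, which dovetails with the finiteness statement of \cref{cor:monoid_lat_finiteness}.
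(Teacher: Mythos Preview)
Your proof is correct and follows essentially the same approach as the paper: both argue that any system of isomorphisms is automatically compatible because the two sides of~\eqref{eq:compatibility} are isomorphisms $V_q \to \mathcal{P}$ and hence must agree (otherwise composing one with the inverse of the other would give a nontrivial automorphism), and then invoke \cref{lem:compatible_coloring}. You spell out a few routine details the paper leaves implicit, and your closing remark about uniqueness of the upho coloring is a nice bonus observation.
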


\begin{proof}
Let $\mathcal{P}$ be an upho poset and let $\varphi_p$, for $p \in \mathcal{P}$, be a system of isomorphisms for~$\mathcal{P}$. Notice that if~\eqref{eq:compatibility} fails for some $p\leq q \in \mathcal{P}$, then the left-hand and right-hand sides are two different isomorphisms $V_q\to \mathcal{P}$. Composing one of these isomorphisms with the inverse of the other would then yield a \emph{nontrivial automorphism} $\mathcal{P}\to \mathcal{P}$. So if $\mathcal{P}$ has no nontrivial automorphisms, then in fact the system of isomorphisms is compatible, and hence by \cref{lem:compatible_coloring}, $\mathcal{P}$ is colorable.
\end{proof}

To finish the proof of our first main result, we need the following observation.

\begin{lemma} \label{lem:autos}
If a finite graded lattice $L$ has no nontrivial automorphisms, then any upho lattice with core $L$ also has no nontrivial automorphisms.
\end{lemma}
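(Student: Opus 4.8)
The plan is to take an arbitrary automorphism $\psi$ of an upho lattice $\mathcal{L}$ whose core $L$ has no nontrivial automorphisms, and to show that $\psi$ is the identity by proving it fixes every element, proceeding by induction on rank. Since $\mathcal{L}$ is $\mathbb{N}$-graded and $\psi$ is an order isomorphism, $\psi$ preserves rank, and it fixes the unique minimum $\hat{0}$, so the base case is immediate. The work is entirely in the inductive step.

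The heart of the argument is a local claim: \emph{if $\psi(x) = x$, then $\psi$ fixes every cover of $x$.} To prove it, I would pass to the principal order filter $V_x$, which is isomorphic to $\mathcal{L}$ and so is itself an upho lattice; its core is $[x, J_x]$, where $J_x$ is the join of the covers of $x$ in $\mathcal{L}$ (these covers are exactly the atoms of $V_x$, and there are finitely many since $\mathcal{L}$ is finite-type). Because the core is defined purely order-theoretically, it is an isomorphism invariant, so the core of $V_x$ is isomorphic to $L$ and hence also has no nontrivial automorphisms. Now, since $\psi$ fixes $x$, it maps $V_x$ onto itself and restricts to an automorphism of $V_x$; this restriction fixes the minimum $x$, permutes the atoms of $V_x$, and therefore fixes their join $J_x$. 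Consequently $\psi$ carries the interval $[x, J_x]$ to itself, i.e.\ restricts to an automorphism of the core of $V_x$. As that core has no nontrivial automorphisms, $\psi$ fixes $[x, J_x]$ pointwise; in particular it fixes every atom of $V_x$, which is to say every cover of $x$, proving the claim.

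With the claim in hand the induction is routine: assuming $\psi$ fixes all elements of rank $k$, any element $p$ of rank $k+1$ has, by gradedness, a lower cover $x$ of rank $k$, which is fixed by hypothesis, so the claim gives $\psi(p) = p$. Hence $\psi$ fixes everything and is trivial. The main obstacle to anticipate is that one cannot simply argue by ``fixing the atoms and taking joins,'' because upho lattices need not be atomistic---an element with a unique lower cover is typically not the join of the atoms beneath it---so the naive approach stalls at exactly such elements; the device that overcomes this is passing to the filter $V_x$ and exploiting that \emph{its} core is again a copy of $L$, which promotes each fixed point to fixed covers one rank at a time. Combined with \cref{cor:colorability} and \cref{cor:monoid_lat_finiteness}, this yields the finiteness of $\kappa(L)$ whenever $L$ has no nontrivial automorphisms.
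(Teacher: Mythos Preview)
Your proof is correct and follows essentially the same approach as the paper: both argue by induction on rank, and at the inductive step both use that $\psi$ fixes a rank-$k$ element $x$, hence restricts to an automorphism of the interval $[x, y_1\vee\cdots\vee y_k]\cong L$, which must be trivial, forcing $\psi$ to fix the covers of $x$. Your write-up is in fact a bit more careful than the paper's in spelling out why $\psi$ preserves this interval (via fixing the join $J_x$) and why the core of $V_x$ is again isomorphic to $L$.
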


\begin{proof}
Let $L$ be a finite graded lattice that has no nontrivial automorphisms, and let~$\mathcal{L}$ be an upho lattice with core $L$. Let $\psi\colon \mathcal{L}\to \mathcal{L}$ be an automorphism of $\mathcal{L}$. We prove, by induction on rank, that $\psi$ is the identity. Assume that we have shown that $\psi$ acts as the identity on all elements of rank $\leq n$ for some $n \geq 0$. Let $x \in \mathcal{L}$ be any element of rank $n$, and let $y_1,\ldots,y_k$ be the covers of $x$. Since $\mathcal{L}$ is upho with core $L$, we have an isomorphisms $\varphi\colon [x,y_1\vee\cdots\vee y_k]\to L$. If $\psi$ nontrivially permuted the $y_1,\ldots,y_k$, then $\varphi \circ \psi \circ\varphi^{-1}$ would be a nontrivial automorphism of~$L$. So we must have $\varphi(y_i) = y_i$ for all these~$y_i$. But since $x$ was arbitrary, and every element of rank $n+1$ covers some element of rank $n$, we have shown that $\psi$ acts as the identity on rank $n$. By induction, we are done.
\end{proof}

Putting everything together, we have our main result of this section.

\begin{thm} \label{thm:finite}
Let $L$ be a finite graded lattice which has no nontrivial automorphisms. Then $L$ is the core of finitely many upho lattices, i.e., $\kappa(L)$ is finite.
\end{thm}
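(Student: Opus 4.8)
The plan is to assemble the three preceding results into a short logical chain, since the substantive work has already been distributed among \cref{cor:monoid_lat_finiteness}, \cref{cor:colorability}, and \cref{lem:autos}. The strategy is to argue that, under the hypothesis on $L$, \emph{every} upho lattice with core $L$ is in fact colorable, at which point the finiteness count from the colorable case applies verbatim.

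Concretely, I would let $\mathcal{L}$ be an arbitrary upho lattice whose core is $L$, and first invoke \cref{lem:autos}: because $L$ has no nontrivial automorphisms, neither does $\mathcal{L}$. Next I would feed this into \cref{cor:colorability}, which guarantees that an upho poset with no nontrivial automorphisms is colorable; hence $\mathcal{L}$ is colorable. Since $\mathcal{L}$ was an arbitrary upho lattice with core $L$, this shows that the set of upho lattices with core $L$ coincides with the set of \emph{colorable} upho lattices with core $L$. Finally, \cref{cor:monoid_lat_finiteness} bounds the latter set: each finite graded lattice is the core of only finitely many colorable upho lattices. Combining these, $\kappa(L)$ is finite.

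I expect no genuine obstacle in this final step, as it is purely a matter of chaining implications; the real content lives in the lemmas already proved. The one point worth stating carefully is the quantifier logic: \cref{cor:colorability} applies to each individual $\mathcal{L}$, and it is the hypothesis on $L$ (propagated up to $\mathcal{L}$ via \cref{lem:autos}) that lets us apply it uniformly to \emph{all} upho lattices sharing the core $L$. The subtlety the authors flag in the introduction — namely that $\kappa(L)$ might conceivably be infinite when $L$ \emph{does} admit a nontrivial automorphism — is exactly the case where \cref{cor:colorability} cannot be invoked, since then a system of isomorphisms need not be compatible and the upho lattice need not come from a monoid; so I would be careful not to claim anything beyond the automorphism-free hypothesis.
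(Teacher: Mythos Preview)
Your proposal is correct and follows exactly the same approach as the paper: the paper's proof consists of the single sentence ``This follows from combining \cref{lem:autos}, \cref{cor:colorability}, and \cref{cor:monoid_lat_finiteness},'' which is precisely the chain of implications you describe.
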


\begin{proof}
This follows from combining \cref{lem:autos}, \cref{cor:colorability}, and \cref{cor:monoid_lat_finiteness}.
\end{proof}

\begin{remark} \label{rem:atom_permute}
Inspecting the proof of \cref{lem:autos}, we see that it is in fact enough to assume the slightly weaker condition that $L$ has \emph{no automorphisms which nontrivially permute its atoms} to be able conclude that $\kappa(L)$ is finite. However, we phrased \cref{thm:finite} the way we did above because the statement is cleaner and because in practice for most interesting finite graded lattices~$L$, their automorphisms are determined by the way they act on the atoms. In particular this is true when $L$ is atomic, i.e., when every element is a join of some subset of atoms.
\end{remark}

For many classes of finite, combinatorial structures, ``most'' members have no nontrivial automorphisms, in the sense that the proportion of such structures on~$[n]$ with a nontrivial automorphism goes to $0$ as $n\to \infty$. This is known to be true for finite graphs~\cite{erdos1963asymmetric} and for finite posets~\cite{promel1987counting}, and we suspect it is true for finite lattices as well. Hence, \cref{thm:finite} should apply to most lattices $L$. On the other hand, we also expect~$\kappa(L) = 0$ for most $L$. So it is reasonable to ask whether there are~$L$ to which \cref{thm:finite} applies but for which we also know that~$\kappa(L) > 0$. We provide an infinite sequence of such~$L$, with both ranks and numbers of atoms going to infinity, in the following \cref{ex:weak_order}.

\begin{example} \label{ex:weak_order}
In \cite[\S4.3.1]{hopkins2024upho1} it is explained that the weak order of any finite Coxeter group is the core of an upho lattice, namely, the classical braid monoid. (The braid monoid gives an upho lattice because it is a Garside monoid~\cite{dehornoy1999gaussian, dehornoy2015foundations}.) For any Dynkin diagram that has no ``$\infty$'' labels, the automorphisms of the corresponding weak order are exactly the Dynkin diagram automorphisms: see~\cite[Corollary~3.2.6]{bjorner2005combinatorics}. Thus, for any $n > 2$, letting~$L$ be the weak order of the type~$B_n$ Coxeter group, $L$ has no nontrivial automorphisms. Hence for this finite graded lattice $L$, which has~$n$ atoms and rank~$n^2$, we have $0 < \kappa(L) < \infty$.
\end{example}

We conclude this section by briefly explaining, in the following \cref{ex:auto_still_finite}, how the techniques we developed here can sometimes be adapted to show that $\kappa(L)$ is finite for certain finite graded lattices $L$ that \emph{do} have nontrivial automorphisms.

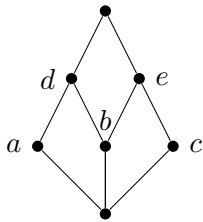
\begin{figure}
\begin{tikzpicture}[scale=0.9]
\node[circle,fill,inner sep=1.5pt] (1) at (0,0) {};
\node[circle,fill,inner sep=1.5pt, label=180:{$a$}] (2) at (-1,1) {};
\node[circle,fill,inner sep=1.5pt, label=90:{$b$}] (3) at (0,1) {};
\node[circle,fill,inner sep=1.5pt, label=0:{$c$}] (4) at (1,1) {};
\node[circle,fill,inner sep=1.5pt, label=180:{$d$}] (5) at (-0.5,2) {};
\node[circle,fill,inner sep=1.5pt, label=0:{$e$}] (6) at (0.5,2) {};
\node[circle,fill,inner sep=1.5pt] (7) at (0,3) {};
\draw (2) -- (1) -- (3) -- (1) -- (4);
\draw (2) -- (5) -- (3) -- (6) -- (4);
\draw (5) -- (7) -- (6);
\end{tikzpicture}
\caption{The lattice $L$ from \cref{ex:auto_still_finite} which has $\kappa(L)<\infty$ even though it has a nontrivial automorphism.} \label{fig:auto_still_finite}
\end{figure}

\begin{example} \label{ex:auto_still_finite}
 Consider the rank-three lattice $L$ depicted in \cref{fig:auto_still_finite}. This~$L$ has a nontrivial automorphism; it even has one which nontrivially permutes its atoms, as in~\cref{rem:atom_permute}. Nevertheless, it can be shown that $\kappa(L)$ is finite for this~$L$. The basic idea is to consider what an upho lattice $\mathcal{L}$ that has $L$ as a core must look like up to rank three, and conclude that any such $\mathcal{L}$ must have no nontrivial automorphisms. Indeed, in any such upho lattice $\mathcal{L}$ with core $L$, there must be a unique element that covers the atom $b$ and that does not belong to the core $L$; call it~$x$. By looking at the principal order filter generated by~$b$, which must be isomorphic to~$\mathcal{L}$ and so in particular have a copy of $L$ at the bottom, we see that for exactly one of the elements~$d$ or~$e$, the join of $x$ with this element has rank three. But this means that the way $\mathcal{L}$ looks up to rank three is asymmetric, and hence there are no automorphisms of $\mathcal{L}$ that nontrivially permute its atoms. By a similar argument as in the proof of \cref{lem:autos}, if~$\mathcal{L}$ has no automorphisms that nontrivially permute its atoms, it has no nontrivial automorphisms at all. Hence, by~\cref{cor:colorability} any such $\mathcal{L}$ is colorable, and therefore by  \cref{cor:monoid_lat_finiteness} we indeed have $\kappa(L) < \infty$. We also believe $\kappa(L) > 0$, because the monoid $M=\langle a,b,c \mid aa=bb, ba=ca\rangle$ should give an upho lattice with core $L$, conditional on the validity of \cref{conj:precoloring_monoid} below in this case.\footnote{This example was considered in \cite[Example~5.10]{hopkins2024upho1}, where the conjecture was implicitly assumed.}
\end{example}

\section{rank-two cores} \label{sec:rank_two}

In this section we explore rank-two cores. Rank-one cores are trivial: the only rank-one lattice is the two-element chain, and it is the core of a unique upho lattice~$\mathbb{N}$. But already rank-two cores are quite interesting. For each $n\geq 1$, there is a unique rank-two lattice with $n$ atoms, which we denote by $M_n$.\footnote{Please do not associate the ``M'' in the lattice $M_n$ with ``monoid;'' it stands rather for ``modular.'' We recall that a lattice $L$ is \dfn{modular} if $a \vee (x \wedge b) = (a \vee x) \wedge b$ for all $a,b,x\in L$ with $a \leq b$.} (In particular, $M_2=B_2$.) Our major result in this section is that for each $n \geq 2$, $\kappa(M_n)$ is greater than or equal to $p(n)$, the number of integer partitions of $n$. Along the way we also show that $\kappa(M_2)=2$. In fact, $M_2$ is the only nontrivial finite lattice where we completely understand all the ways it can be realized as a core of an upho lattice.

Fix $n\geq 2$. We start by describing two different recursive constructions that produce upho lattices with core~$M_n$. In both constructions we will build a sequence of finite posets $P_0 \subseteq P_1 \subseteq \cdots$, with $P_i$ of rank $i$, and then get an upho lattice as the union $\bigcup_{i=0}^{\infty} P_i$.

The first construction we call the \dfn{dominating vertex construction}, which produces a lattice we denote $\mathcal{D}_n$. We start by setting $P_0$ to be the one-element poset. Then, for each $i \geq 1$, we obtain $P_i$ from~$P_{i-1}$ by doing the following: 
\begin{itemize}
    \item we append a new element which covers all the elements of rank $i-1$;
    \item then for each element $p$ of rank $i-1$, we also append $n-1$ additional new elements covering only $p$.
\end{itemize}
We call this the ``dominating vertex construction'' because of the element of rank $i$ which covers (``dominates'') all elements of rank $i-1$. It is clear that $P_0 \subseteq P_1 \subseteq \cdots$, with $P_i$ of rank $i$, so that we can define $\mathcal{D}_n \coloneqq  \bigcup_{i=0}^{\infty} P_i$ to be the result of this construction. For example, the left side of \cref{fig:dominating_flip} depicts the first few ranks of~$\mathcal{D}_3$. 

The second construction we call the \dfn{flip construction}, which produces a lattice we denote~$\mathcal{F}_n$. We start by setting $P_0$ to be the one-element poset and $P_1$ to be the ``claw'' poset with minimum~$\hat{0}$, $n$ atoms, and no other elements. Then, for each $i \geq 2$, we obtain $P_i$ from $P_{i-1}$ by doing the following: 
\begin{itemize}
    \item for each element $p$ of rank $i-2$, letting $q_1,\ldots,q_n$ be the elements of rank $i-1$ covering $p$, we append a new element which covers exactly $q_1,\ldots,q_n$;
    \item then for each element $p$ of rank $i-1$, we also append enough additional new elements covering only $p$ to make $p$ be covered by exactly $n$ elements.
\end{itemize}
We call this the ``flip construction'' because the first step can be seen as taking the portion of the Hasse diagram between ranks $i-2$ and $i-1$ and placing a reflected copy above it. It is again clear that $P_0 \subseteq P_1 \subseteq \cdots$, with $P_i$ of rank $i$, so that we can define $\mathcal{F}_n \coloneqq  \bigcup_{i=0}^{\infty} P_i$ to be the result of this construction. For example, the right side of \cref{fig:dominating_flip} depicts the first few ranks of $\mathcal{F}_3$. 

\begin{figure}
    \begin{center}
    \begin{tikzpicture}[xscale=0.4,yscale=0.7]
    \node[circle,fill=black,inner sep=0pt] (A) at (0,0) {};
    
    \node[circle,fill=black,inner sep=0pt] (B) at (-1.5,1){};
    \node[circle,fill=black,inner sep=0pt] (C) at (0,1){};
    \node[circle,fill=black,inner sep=0pt] (D) at (1.5,1){};

    \node[circle,fill=black,inner sep=0pt] (E) at (-4.5,2){};
    \node[circle,fill=black,inner sep=0pt] (F) at (-3,2){};
    \node[circle,fill=black,inner sep=0pt] (G) at (-1.5,2){};
    \node[circle,fill=black,inner sep=0pt] (H) at (0,2){};
    \node[circle,fill=black,inner sep=0pt] (I) at (1.5,2){};
    \node[circle,fill=black,inner sep=0pt] (J) at (3,2){};
    \node[circle,fill=black,inner sep=0pt] (K) at (4.5,2){};

    \node[circle,fill=black,inner sep=0pt] (L) at (-7,3){};
    \node[circle,fill=black,inner sep=0pt] (M) at (-6,3){};
    \node[circle,fill=black,inner sep=0pt] (N) at (-5,3){};
    \node[circle,fill=black,inner sep=0pt] (O) at (-4,3){};
    \node[circle,fill=black,inner sep=0pt] (P) at (-3,3){};
    \node[circle,fill=black,inner sep=0pt] (Q) at (-2,3){};
    \node[circle,fill=black,inner sep=0pt] (R) at (-1,3){};
    \node[circle,fill=black,inner sep=0pt] (S) at (0,3){};
    \node[circle,fill=black,inner sep=0pt] (T) at (1,3){};
    \node[circle,fill=black,inner sep=0pt] (U) at (2,3){};
    \node[circle,fill=black,inner sep=0pt] (V) at (3,3){};
    \node[circle,fill=black,inner sep=0pt] (W) at (4,3){};
    \node[circle,fill=black,inner sep=0pt] (X) at (5,3){};
    \node[circle,fill=black,inner sep=0pt] (Y) at (6,3){};
    \node[circle,fill=black,inner sep=0pt] (Z) at (7,3){};

    \node[circle,fill=black,inner sep=0pt] (1) at (-7.5,3.75){};
    \node[circle,fill=black,inner sep=0pt] (2) at (-7,3.75){};
    \node[circle,fill=black,inner sep=0pt] (3) at (-6.5,3.75){};
    \node[circle,fill=black,inner sep=0pt] (4) at (-6,3.75){};
    \node[circle,fill=black,inner sep=0pt] (5) at (-5.5,3.75){};
    \node[circle,fill=black,inner sep=0pt] (6) at (-5,3.75){};
    \node[circle,fill=black,inner sep=0pt] (7) at (-4.5,3.75){};
    \node[circle,fill=black,inner sep=0pt] (8) at (-4,3.75){};
    \node[circle,fill=black,inner sep=0pt] (9) at (-3.5,3.75){};
    \node[circle,fill=black,inner sep=0pt] (10) at (-3,3.75){};
    \node[circle,fill=black,inner sep=0pt] (11) at (-2.5,3.75){};
    \node[circle,fill=black,inner sep=0pt] (12) at (-2,3.75){};
    \node[circle,fill=black,inner sep=0pt] (13) at (-1.5,3.75){};
    \node[circle,fill=black,inner sep=0pt] (14) at (-1,3.75){};
    \node[circle,fill=black,inner sep=0pt] (15) at (-0.5,3.75){};
    \node[circle,fill=black,inner sep=0pt] (16) at (0,3.75){};
    \node[circle,fill=black,inner sep=0pt] (17) at (0.5,3.75){};
    \node[circle,fill=black,inner sep=0pt] (18) at (1,3.75){};
    \node[circle,fill=black,inner sep=0pt] (19) at (1.5,3.75){};
    \node[circle,fill=black,inner sep=0pt] (20) at (2,3.75){};
    \node[circle,fill=black,inner sep=0pt] (21) at (2.5,3.75){};
    \node[circle,fill=black,inner sep=0pt] (22) at (3,3.75){};
    \node[circle,fill=black,inner sep=0pt] (23) at (3.5,3.75){};
    \node[circle,fill=black,inner sep=0pt] (24) at (4,3.75){};
    \node[circle,fill=black,inner sep=0pt] (25) at (4.5,3.75){};
    \node[circle,fill=black,inner sep=0pt] (26) at (5,3.75){};
    \node[circle,fill=black,inner sep=0pt] (27) at (5.5,3.75){};
    \node[circle,fill=black,inner sep=0pt] (28) at (6,3.75){};
    \node[circle,fill=black,inner sep=0pt] (29) at (6.5,3.75){};
    \node[circle,fill=black,inner sep=0pt] (30) at (7,3.75){};
    \node[circle,fill=black,inner sep=0pt] (31) at (7.5,3.75){};

    \draw (A) -- (B);
    \draw (A) -- (C);
    \draw (A) -- (D);

    \draw (B) -- (E);
    \draw (B) -- (F);
    \draw (B) -- (H);
    \draw (C) -- (G);
    \draw (C) -- (H);
    \draw (C) -- (I);
    \draw (D) -- (H);
    \draw (D) -- (J);
    \draw (D) -- (K);

    \draw (E) -- (L);
    \draw (E) -- (M);
    \draw (E) -- (S);
    \draw (F) -- (N);
    \draw (F) -- (O);
    \draw (F) -- (S);
    \draw (G) -- (P);
    \draw (G) -- (Q);
    \draw (G) -- (S);
    \draw (H) -- (R);
    \draw (H) -- (S);
    \draw (H) -- (T);
    \draw (I) -- (S);
    \draw (I) -- (U);
    \draw (I) -- (V);
    \draw (J) -- (S);
    \draw (J) -- (W);
    \draw (J) -- (X);
    \draw (K) -- (S);
    \draw (K) -- (Y);
    \draw (K) -- (Z);

    \draw (L) -- (1);
    \draw (L) -- (2);
    \draw (L) -- (16);
    \draw (M) -- (3);
    \draw (M) -- (4);
    \draw (M) -- (16);
    \draw (N) -- (5);
    \draw (N) -- (6);
    \draw (N) -- (16);
    \draw (O) -- (7);
    \draw (O) -- (8);
    \draw (O) -- (16);
    \draw (P) -- (9);
    \draw (P) -- (10);
    \draw (P) -- (16);
    \draw (Q) -- (11);
    \draw (Q) -- (12);
    \draw (Q) -- (16);
    \draw (R) -- (13);
    \draw (R) -- (14);
    \draw (R) -- (16);
    \draw (S) -- (15);
    \draw (S) -- (16);
    \draw (S) -- (17);
    \draw (T) -- (16);
    \draw (T) -- (18);
    \draw (T) -- (19);
    \draw (U) -- (16);
    \draw (U) -- (20);
    \draw (U) -- (21);
    \draw (V) -- (16);
    \draw (V) -- (22);
    \draw (V) -- (23);
    \draw (W) -- (16);
    \draw (W) -- (24);
    \draw (W) -- (25);
    \draw (X) -- (16);
    \draw (X) -- (26);
    \draw (X) -- (27);
    \draw (Y) -- (16);
    \draw (Y) -- (28);
    \draw (Y) -- (29);
    \draw (Z) -- (16);
    \draw (Z) -- (30);
    \draw (Z) -- (31);
    \end{tikzpicture}
    \qquad \vrule \qquad
    \begin{tikzpicture}[xscale=0.4,yscale=0.7]
    \node[circle,fill=black,inner sep=0pt] (A) at (0,0) {};

    \node[circle,fill=black,inner sep=0pt] (B) at (-1.5,1){};
    \node[circle,fill=black,inner sep=0pt] (C) at (0,1){};
    \node[circle,fill=black,inner sep=0pt] (D) at (1.5,1){};

    \node[circle,fill=black,inner sep=0pt] (E) at (-4.5,2){};
    \node[circle,fill=black,inner sep=0pt] (F) at (-3,2){};
    \node[circle,fill=black,inner sep=0pt] (G) at (-1.5,2){};
    \node[circle,fill=black,inner sep=0pt] (H) at (0,2){};
    \node[circle,fill=black,inner sep=0pt] (I) at (1.5,2){};
    \node[circle,fill=black,inner sep=0pt] (J) at (3,2){};
    \node[circle,fill=black,inner sep=0pt] (K) at (4.5,2){};

    \node[circle,fill=black,inner sep=0pt] (L) at (-7,3){};
    \node[circle,fill=black,inner sep=0pt] (M) at (-6,3){};
    \node[circle,fill=black,inner sep=0pt] (N) at (-5,3){};
    \node[circle,fill=black,inner sep=0pt] (O) at (-4,3){};
    \node[circle,fill=black,inner sep=0pt] (P) at (-3,3){};
    \node[circle,fill=black,inner sep=0pt] (Q) at (-2,3){};
    \node[circle,fill=black,inner sep=0pt] (R) at (-1,3){};
    \node[circle,fill=black,inner sep=0pt] (S) at (0,3){};
    \node[circle,fill=black,inner sep=0pt] (T) at (1,3){};
    \node[circle,fill=black,inner sep=0pt] (U) at (2,3){};
    \node[circle,fill=black,inner sep=0pt] (V) at (3,3){};
    \node[circle,fill=black,inner sep=0pt] (W) at (4,3){};
    \node[circle,fill=black,inner sep=0pt] (X) at (5,3){};
    \node[circle,fill=black,inner sep=0pt] (Y) at (6,3){};
    \node[circle,fill=black,inner sep=0pt] (Z) at (7,3){};

    \node[circle,fill=black,inner sep=0pt] (1) at (-7.5,4){};
    \node[circle,fill=black,inner sep=0pt] (2) at (-7,4){};
    \node[circle,fill=black,inner sep=0pt] (3) at (-6.5,4){};
    \node[circle,fill=black,inner sep=0pt] (4) at (-6,4){};
    \node[circle,fill=black,inner sep=0pt] (5) at (-5.5,4){};
    \node[circle,fill=black,inner sep=0pt] (6) at (-5,4){};
    \node[circle,fill=black,inner sep=0pt] (7) at (-4.5,4){};
    \node[circle,fill=black,inner sep=0pt] (8) at (-4,4){};
    \node[circle,fill=black,inner sep=0pt] (9) at (-3.5,4){};
    \node[circle,fill=black,inner sep=0pt] (10) at (-3,4){};
    \node[circle,fill=black,inner sep=0pt] (11) at (-2.5,4){};
    \node[circle,fill=black,inner sep=0pt] (12) at (-2,4){};
    \node[circle,fill=black,inner sep=0pt] (13) at (-1.5,4){};
    \node[circle,fill=black,inner sep=0pt] (14) at (-1,4){};
    \node[circle,fill=black,inner sep=0pt] (15) at (-0.5,4){};
    \node[circle,fill=black,inner sep=0pt] (16) at (0,4){};
    \node[circle,fill=black,inner sep=0pt] (17) at (0.5,4){};
    \node[circle,fill=black,inner sep=0pt] (18) at (1,4){};
    \node[circle,fill=black,inner sep=0pt] (19) at (1.5,4){};
    \node[circle,fill=black,inner sep=0pt] (20) at (2,4){};
    \node[circle,fill=black,inner sep=0pt] (21) at (2.5,4){};
    \node[circle,fill=black,inner sep=0pt] (22) at (3,4){};
    \node[circle,fill=black,inner sep=0pt] (23) at (3.5,4){};
    \node[circle,fill=black,inner sep=0pt] (24) at (4,4){};
    \node[circle,fill=black,inner sep=0pt] (25) at (4.5,4){};
    \node[circle,fill=black,inner sep=0pt] (26) at (5,4){};
    \node[circle,fill=black,inner sep=0pt] (27) at (5.5,4){};
    \node[circle,fill=black,inner sep=0pt] (28) at (6,4){};
    \node[circle,fill=black,inner sep=0pt] (29) at (6.5,4){};
    \node[circle,fill=black,inner sep=0pt] (30) at (7,4){};
    \node[circle,fill=black,inner sep=0pt] (31) at (7.5,4){};

    \draw (A) -- (B);
    \draw (A) -- (C);
    \draw (A) -- (D);

    \draw (B) -- (E);
    \draw (B) -- (F);
    \draw (B) -- (H);
    \draw (C) -- (G);
    \draw (C) -- (H);
    \draw (C) -- (I);
    \draw (D) -- (H);
    \draw (D) -- (J);
    \draw (D) -- (K);

    \draw (E) -- (L);
    \draw (E) -- (M);
    \draw (E) -- (O);
    \draw (F) -- (N);
    \draw (F) -- (O);
    \draw (F) -- (P);
    \draw (G) -- (Q);
    \draw (G) -- (R);
    \draw (G) -- (S);
    \draw (H) -- (O);
    \draw (H) -- (S);
    \draw (H) -- (W);
    \draw (I) -- (S);
    \draw (I) -- (T);
    \draw (I) -- (U);
    \draw (J) -- (V);
    \draw (J) -- (W);
    \draw (J) -- (X);
    \draw (K) -- (W);
    \draw (K) -- (Y);
    \draw (K) -- (Z);

    \draw (L) -- (1);
    \draw (L) -- (2);
    \draw (M) -- (3);
    \draw (M) -- (5);
    \draw (N) -- (6);
    \draw (N) -- (7);
    \draw (P) -- (9);
    \draw (P) -- (10);
    \draw (Q) -- (11);
    \draw (Q) -- (12);
    \draw (R) -- (13);
    \draw (R) -- (15);
    \draw (T) -- (17);
    \draw (T) -- (19);
    \draw (U) -- (20);
    \draw (U) -- (21);
    \draw (V) -- (22);
    \draw (V) -- (23);
    \draw (X) -- (25);
    \draw (X) -- (26);
    \draw (Y) -- (27);
    \draw (Y) -- (29);
    \draw (Z) -- (30);
    \draw (Z) -- (31);

    \draw (L) -- (4);
    \draw (M) -- (4);
    \draw (O) -- (4);
    \draw (N) -- (8);
    \draw (O) -- (8);
    \draw (P) -- (8);
    \draw (Q) -- (14);
    \draw (R) -- (14);
    \draw (S) -- (14);
    \draw (O) -- (16);
    \draw (S) -- (16);
    \draw (W) -- (16);
    \draw (S) -- (18);
    \draw (T) -- (18);
    \draw (U) -- (18);
    \draw (V) -- (24);
    \draw (W) -- (24);
    \draw (X) -- (24);
    \draw (W) -- (28);
    \draw (Y) -- (28);
    \draw (Z) -- (28);
    \end{tikzpicture}
    \end{center}
    \caption{On the left, the dominating vertex construction of an upho lattice $\mathcal{D}_3$ with core~$M_3$; and on the right, the flip construction of an upho lattice $\mathcal{F}_3$ with core~$M_3$.} \label{fig:dominating_flip}
\end{figure}
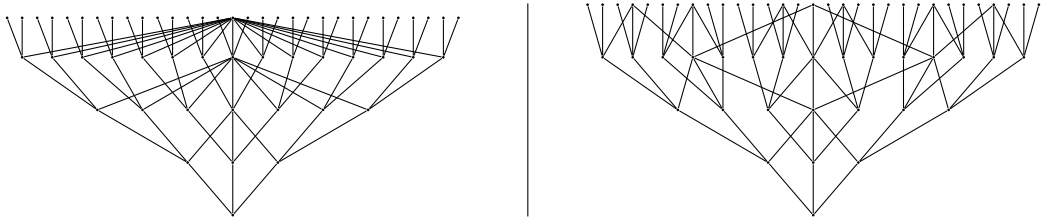

\begin{thm} \label{thm:dv_flip_lattices}
For any $n \geq 2$, the dominating vertex and flip constructions produce two different upho lattices $\mathcal{D}_n$ and $\mathcal{F}_n$ with core $M_n$.
\end{thm}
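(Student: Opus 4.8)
The plan is to prove, for each of the two posets $\mathcal{L}\in\{\mathcal{D}_n,\mathcal{F}_n\}$, that $\mathcal{L}$ is upper homogeneous, that it is a lattice, and that its core is $M_n$, and then to separate $\mathcal{D}_n$ from $\mathcal{F}_n$ by an isomorphism invariant. The organizing observation for upper homogeneity is that each construction is \emph{deterministic}: the passage from $P_{i-1}$ to $P_i$ is completely forced, so any two graded posets built by the same rule are isomorphic, by extending an isomorphism of the rank-$\le i-1$ parts to the rank-$i$ parts one rank at a time (matching the newly appended vertices and their covers). Hence it suffices to verify, for each $p\in\mathcal{L}$, that the principal filter $V_p$ is again produced by the very same rule; this yields $V_p\cong\mathcal{L}$ and so upper homogeneity. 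In both constructions every element has exactly $n$ upper covers, and since all covers of an element $q\ge p$ again lie in $V_p$, each element of $V_p$ has exactly $n$ covers inside $V_p$ as well; this is the basic input to the verification.

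For $\mathcal{D}_n$ this verification is immediate. The rank-$j$ dominating vertex lies above $p$ and covers every rank-$(j-1)$ element, so in particular it dominates all rank-$(j-1)$ elements of $V_p$, while the $n-1$ private covers of any $q\ge p$ lie in $V_p$ and cover only $q$ there; checking that these exhaust the rank-$j$ elements of $V_p$ shows that $V_p$ obeys the dominating vertex rule. The core is $M_n$: the join of the atoms is the rank-two dominating vertex, which is the unique rank-two element lying above two distinct atoms, and the interval below it is exactly $M_n$.

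The flip construction is the crux and is where I expect the main difficulty. The flip rule is not local to a single rank, and, more seriously, the role of a vertex can change upon passing to a filter: a vertex that is a flip vertex in $\mathcal{L}$ may become a mere padding vertex in $V_p$ (this already occurs for $\mathcal{F}_2\cong\mathbb{N}^2$). To run the same argument one must therefore characterize the flip vertices of $V_p$ \emph{intrinsically}, as the unique elements covering the complete set of $n$ upper covers of a rank-$(j-2)$ element. This hinges on an auxiliary structural lemma: in $\mathcal{F}_n$, distinct elements of a given rank have distinct sets of upper covers. I would prove this by induction on rank, using that any element with two or more lower covers must be a flip vertex and is thus determined by the rank-$(j-2)$ element whose covers it collects, tracing the resulting coincidences down to a lower rank. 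Granting the lemma, the flip rule for $V_p$ follows and gives $V_p\cong\mathcal{F}_n$, with core $M_n$ exactly as above (the join of the atoms being the rank-two flip vertex $t$).

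For the lattice property I would use the criterion that a finite-type $\mathbb{N}$-graded meet-semilattice in which every pair has an upper bound is a lattice: given $x,y$ and an upper bound $z$, the common upper bounds lying in the finite interval $[\hat0,z]$ form a nonempty meet-closed set, whose meet is $x\vee y$. Upper bounds exist by the dominating vertices in $\mathcal{D}_n$ and, in $\mathcal{F}_n$, by iterating flip vertices until a common element sits above the given pair; meets (greatest common lower bounds) exist by direct inspection of the structure. Finally, $\mathcal{D}_n\not\cong\mathcal{F}_n$ because the number of \emph{lower} covers of an element is an isomorphism invariant: in $\mathcal{F}_n$ every element has at most $n$ lower covers, whereas the rank-three dominating vertex of $\mathcal{D}_n$ has $1+n(n-1)>n$ of them. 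As a technically cleaner alternative for the upper homogeneity and lattice steps, I would realize $\mathcal{D}_n$ and $\mathcal{F}_n$ as the left-divisibility posets of the left-cancellative homogeneous monoids $\langle s_0,\dots,s_{n-1}\mid s_is_0=s_0s_0\rangle$ and $\langle s_1,\dots,s_n\mid s_1s_2=s_2s_3=\cdots=s_ns_1\rangle$ and apply \cref{lem:monoids}, in which case establishing left-cancellativity and least common right-multiples for the second (cyclic) monoid is the analogous main obstacle.
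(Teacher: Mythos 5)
Your architecture is genuinely different from the paper's: you verify directly that each construction is upho, a lattice, and has core $M_n$, whereas the paper deduces \cref{thm:dv_flip_lattices} from \cref{thm:rank2_monoids}, identifying $\mathcal{D}_n$ as $\mathcal{L}(f)$ for $f$ with one-point image and $\mathcal{F}_n$ as $\mathcal{L}(f)$ for $f$ bijective, with the heavy lifting (left-cancellativity via \cref{lem:rels_left_cancel}, the lattice property via the BEZ-type \cref{lem:bez} plus an explicit word computation) done once in the monoid setting. A correct direct argument would be a legitimate alternative, and your treatment of $\mathcal{D}_n$, the determinism principle, and the separating invariant (the rank-three dominating vertex covers $1+n(n-1)>n$ elements, while every element of $\mathcal{F}_n$ covers at most $n$) are all fine. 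The problem is that your proof has a genuine gap exactly at the step you yourself identify as the crux.

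Specifically, your auxiliary lemma (distinct elements of a fixed rank of $\mathcal{F}_n$ have distinct sets of upper covers) is true and provable by your downward induction, but the assertion ``granting the lemma, the flip rule for $V_p$ follows'' does not hold. The lemma gives only the uniqueness half: the flip of $y$ is the unique element covering all $n$ upper covers of $y$. The missing half is the converse: if $z \in V_p$ covers two distinct elements $u,v$ of $V_p$, then $z$ is the flip of some $x \in \mathcal{F}_n$, and you must show that this $x$ lies in $V_p$; otherwise $z$ would cover, inside $V_p$, a set of between $2$ and $n$ covers of an element \emph{outside} $V_p$, and (using your lemma) this set is not the full cover set of any element of $V_p$, so $V_p$ would violate the flip rule. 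Equivalently, you need: if two distinct upper covers of $x$ lie above $p$, then $x \geq p$. This is a meet-type statement --- it says $x$ is the meet of any two of its upper covers --- and it is essentially equivalent to the meet-semilattice property of $\mathcal{F}_n$, which you also assert without argument (``meets \ldots exist by direct inspection of the structure''). So the two load-bearing facts for $\mathcal{F}_n$ --- the converse flip rule in $V_p$ and the existence of meets --- are each deferred to inspection or tacitly to one another, and neither is established; they are the actual mathematical content of the theorem. Your closing fallback (realizing $\mathcal{F}_n$ as the divisibility poset of the cyclic-relation monoid) is in substance the paper's route, but there too you explicitly defer the main obstacles: left-cancellativity, common right multiples, and the identification of the resulting poset with $\mathcal{F}_n$.
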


\begin{remark}
Both of these upho lattices already appeared in~\cite{hopkins2024upho1}, albeit not as explicitly. Namely, $\mathcal{D}_n$ is the upho lattice in~\cite[Theorem~4.2]{hopkins2024upho1}, and $\mathcal{F}_n$ is the upho lattice for the dual braid monoid of a rank two Coxeter group as in~\cite[\S4.3.2]{hopkins2024upho1}.
\end{remark}

Although it is not too difficult to prove \cref{thm:dv_flip_lattices} directly, we postpone the proof until we have described a more general construction of upho lattices with core $M_n$ coming from monoids. But let us point out right now that if an upho lattice agrees with either of these constructions up to rank three, it agrees forever.

\begin{thm} \label{thm:dv_flip_unique}
Let $\mathcal{L}$ be an upho lattice with core $M_n$ for some $n \geq 2$. If~$\mathcal{L}$ agrees with $\mathcal{D}_n$ up to rank three, it must in fact be $\mathcal{D}_n$ (up to isomorphism). The same is true for $\mathcal{F}_n$.
\end{thm}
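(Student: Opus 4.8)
The plan is to prove both statements by induction on rank, taking the hypothesized agreement up to rank three as the base case and showing that agreement up to rank $k$ forces agreement up to rank $k+1$ for every $k\ge 3$. The engine is self-similarity: for each $p\in\mathcal{L}$ we have $V_p\cong\mathcal{L}$, so by the inductive hypothesis $V_p$ agrees with the target construction up to relative rank $k$. Since $p$ has positive rank, this pins down the part of $\mathcal{L}$ lying above $p$ out to absolute rank $\rho(p)+k$; in particular the filter $V_s$ at an atom $s$ determines everything $\ge s$ out to rank $k+1$, while the filter $V_z$ at the top $z$ of the core (the common join of all atoms) determines everything $\ge z$ out to rank $k+2$. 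One general fact, valid for any upho lattice with core $M_n$, is used throughout: for every $p$ the join $f_p$ of the $n$ up-covers of $p$ has rank $\rho(p)+2$ and covers exactly those $n$ up-covers \emph{within} $V_p$, because inside $V_p\cong\mathcal{L}$ this join is just the top of the core. Note also that two rank-$k$ elements share an up-cover exactly when their join has rank $k+1$; the elements $f_p$ with $\rho(p)=k-1$ are the ones that will play the role of shared covers.

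For $\mathcal{D}_n$ the point is that all these joins collapse to a single element. I will show $\bigvee\{u:\rho(u)=k\}$ has rank $k+1$; call it $d_{k+1}$. For each atom $s$ put $w_s:=\bigvee\{u\ge s:\rho(u)=k\}$, which by the inductive hypothesis applied inside $V_s\cong\mathcal{D}_n$ is the dominating vertex of $V_s$ at relative rank $k$, hence has rank $k+1$; clearly $\bigvee\{u:\rho(u)=k\}=\bigvee_s w_s$. The dominating vertex $d_k$ satisfies $d_k\ge z$, so each $w_s\ge d_k\ge z$ and thus lies in $V_z$ at relative rank $k-1$, where it covers every rank-$k$ element $\ge z$. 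Inside $V_z\cong\mathcal{D}_n$ the element covering all relative-rank-$(k-2)$ elements is unique---this is exactly where one needs relative rank $k-1\ge 2$, i.e.\ $k\ge 3$---so all $w_s$ coincide with a single $d_{k+1}$. Once $d_{k+1}$ dominates rank $k$, any two distinct rank-$k$ elements join to $d_{k+1}$, so each has $d_{k+1}$ together with $n-1$ private up-covers, and rank $k+1$ is forced to match $\mathcal{D}_n$.

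For $\mathcal{F}_n$ the point is the opposite: the joins $f_p$ stay distinct, and I establish this by a counting argument. First I read off each rank-$k$ element's up-cover set inside a single filter. By the inductive hypothesis every rank-$k$ element has down-degree $n$ or $1$. If $u$ has down-degree $n$, then $u$ is the core-top of $V_q$ for the rank-$(k-2)$ element $q$ below it, and inside $V_q\cong\mathcal{F}_n$ the up-covers of $u$ are $n$ \emph{distinct} flip vertices $f_{r_1},\dots,f_{r_n}$; as these all lie above $q$ and $u$ has exactly $n$ up-covers, they are all of them. If $u$ has down-degree $1$ with down-cover $r$, then inside $V_r\cong\mathcal{F}_n$ the up-covers of $u$ are the core-top $f_r$ together with $n-1$ others, none of which can be any $f_{r''}$ (the only down-cover of $u$ is $r$). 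Thus every rank-$(k+1)$ element receives a label---either $f_r$ for a rank-$(k-1)$ element $r$, or a private attached to a down-degree-$1$ vertex---and a short computation with the rank sizes (all equal to those of $\mathcal{F}_n$ by \cref{cor:upho_gfs}) shows the number of labels equals the number of elements $\mathcal{L}$ must have at rank $k+1$. Since every rank-$(k+1)$ element carries at least one label and the total label count already equals the element count, the labeling is a bijection: no two flip vertices coincide and no private is shared. Hence rank $k+1$ is forced to match $\mathcal{F}_n$.

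The main obstacle in both parts is the same, and it is what makes rank three rather than rank two the right threshold: a principal filter $V_p$ only sees elements above $p$, so on its own it cannot decide whether two rank-$k$ elements sharing no common lower element have a common up-cover---equivalently, which of the joins $f_p$ coincide. This cross-filter gluing is resolved in opposite extremes by the two constructions: by the uniqueness of the dominating vertex inside $V_z$ for $\mathcal{D}_n$, which forces \emph{maximal} coincidence, and by the rigid rank-size count for $\mathcal{F}_n$, which forces \emph{no} coincidence. I expect the most delicate part to write out to be the $\mathcal{F}_n$ bookkeeping---verifying that the local filter descriptions of up-cover sets are mutually consistent and that the label count is exactly the prescribed rank-$(k+1)$ size---since this is where the two-step reach of the filters (rank-$(k-2)$ for the down-degree-$n$ vertices, rank-$(k-1)$ for the rest) and the constraint $k\ge 3$ are both essential.
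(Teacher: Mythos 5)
Your proposal is correct, and it splits into two halves relative to the paper. Your $\mathcal{D}_n$ argument is essentially the paper's: both take the dominating vertices living in the atom filters $V_s$ and force them to coincide. The paper does this with the bare lattice axiom (in a lattice, two distinct elements cannot both be covered by two distinct common elements), while you invoke uniqueness of the dominating vertex inside $V_z$; this is a cosmetic difference. Your $\mathcal{F}_n$ argument, however, is genuinely different in structure. The paper proceeds by contradiction: it supposes some rank-$(k+1)$ element covers two elements $y,z$ with no common lower cover, splits into cases according to whether some $y' \vee z'$ has rank $k$, handles one case by embedding the configuration into $V_{y'\wedge z'}$ and citing rank-three agreement, and handles the other by counting \emph{edges} between ranks $k$ and $k+1$. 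You instead argue directly: you determine each rank-$k$ element's complete up-cover set from rank-$\leq 3$ data inside a single principal filter ($V_q$ for a down-degree-$n$ vertex, $V_r$ for a down-degree-$1$ vertex), and then a single \emph{element} count --- the number of labels $a_{k-1} + (n-1)(a_k - a_{k-2})$ equals $a_{k+1}$ via the recurrence $a_i = na_{i-1} - (n-1)a_{i-2}$ --- forces the labeling to be a bijection, which pins down the whole bipartite cover graph between ranks $k$ and $k+1$. Both arguments lean on \cref{cor:upho_gfs} and the same recurrence, but yours avoids the case analysis and the contradiction at the cost of heavier bookkeeping in setting up the labels; your positive formulation (the structure is forced, rather than deviations being impossible) is arguably cleaner.

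One small caution: your parenthetical claim that the $n-1$ non-core-top up-covers of a down-degree-$1$ vertex $u$ ``cannot be any $f_{r''}$'' is not justified at the point you assert it --- nothing yet bounds down-degrees at rank $k+1$, so a priori a private of $u$ could coincide with $f_{r''}$ for some $r'' \neq r$, giving that element down-degree $n+1$. Fortunately your argument does not need this a priori: treat the labels as a multiset of assignments, note that every rank-$(k+1)$ element receives at least one label, and let the equality of counts rule out all coincidences --- including this one --- as a conclusion rather than a hypothesis. With that adjustment, and with the routine verification (which you omit) that the bijection determines every down-cover set at rank $k+1$, the proof is complete.
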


\begin{proof}
We first consider the dominating vertex construction.  Fix an integer $k \geq 3$, and suppose that $\mathcal{L}$ is an upho lattice with core $M_n$ and that for each~$i \leq k$, there is an element of rank $i$ in $\mathcal{L}$ that covers all elements of rank $i - 1$. Note that this implies that $\mathcal{L}$ agrees with~$\mathcal{D}_n$ up to rank $k$. Indeed, each element at rank $i$ other than the ``dominating'' one covers exactly one element of rank $i-1$: if another rank-$i$ element covered two or more elements of rank $i-1$, then these rank-$(i-1)$ elements would have incomparable upper bounds at rank $i$; and each element of rank~$i$ must cover an element of rank $i-1$ for the lattice to be graded. Moreover, each element at rank $i-1$ is covered by $n$ elements: one is the ``dominating'' element of rank $i$ and the other $n-1$ have no other lower covers. 

Let $s_1$, \ldots, $s_n$ be the atoms of $\mathcal{L}$.  For any~$1 \leq i \leq n$, consider the order filter $V_{s_i}$ above $s_i$: it contains some set $S_i$ of vertices at rank $k$ in~$\mathcal{L}$.  These vertices are at rank $k - 1$ in $V_{s_i}$, and since $V_{s_i} \cong \mathcal{L}$, we have by assumption that there is an element $x_i$ in $V_{s_i}$ that covers all vertices in~$S_i$.

Let $u = s_1 \vee \cdots \vee s_n$ be the element of rank $2$ in $\mathcal{L}$ that covers all the $s_i$.  Consider the set $S$ of vertices at rank $k$ in $\mathcal{L}$ that are larger than $u$. Obviously, $S \subseteq S_i$ for all~$1 \leq i \leq n$, and hence every element in $S$ is covered by $x_1, \ldots, x_n$.  Since $k > 2$ and $\mathcal{L}$ is upho and not a chain, $\#S \geq 2$.  That is, $S$ is a set of at least $2$ vertices in $\mathcal{L}$, all of which are covered by all of $x_1, \ldots, x_n$.  Since $\mathcal{L}$ is a lattice, this is only possible if $x_1 = \cdots = x_n$, and this single element covers all of the vertices in all of the $S_i$.  Since every element at rank $k$ in $\mathcal{L}$ must be larger than some atom $s_i$, and therefore must belong to one of the sets $S_i$, it follows that the element $x_1$ covers all elements of rank $k$ in $\mathcal{L}$, so that $\mathcal{L}$ agrees with the $\mathcal{D}_n$ through rank $k + 1$.  The result now follows by induction.

We now consider the flip construction.  Observe that, by construction, every element of $\mathcal{F}_n$ either covers exactly $n$ elements (if it is the flip of an element two ranks lower) or exactly $1$ element (otherwise).   Now fix an integer $k \geq 3$, and suppose, for sake of contradiction, that $\mathcal{L}$ is an upho lattice with core $M_n$ that agrees with the flip construction up to rank $k$, but does not agree with it at rank $k + 1$.  Then there is an element $x$ at rank $k + 1$ in $\mathcal{L}$ such that there are elements $y, z$ with $y \lessdot x$ and $z \lessdot x$ but $y$ and $z$ do not cover an element in common. Let $y'$ be an element covered by $y$ and let $z'$ be an element covered by $z$.  We consider two possibilities.

First, suppose that $y'$ and $z'$ are covered in common by some element $w \coloneqq y' \vee z'$, i.e., that $\rho(y' \vee z') = k$.   Since $\rho(w) = k$,  $\mathcal{L}$ agrees with $\mathcal{F}_n$ through rank $k$, and~$w$ covers more than one element, $w$ is a flip of some vertex two ranks below it, and all the elements covered by $w$ share a single lower cover.  Therefore $y' \wedge z'$ has rank~$k - 2 > 0$.  However, in this case the elements $x, y, z$ all belong to $V_{y' \wedge z'} \cong \mathcal{L}$, and have rank~$\leq 3$ in this poset; this contradicts the hypothesis that $\mathcal{L}$ and $\mathcal{F}_n$ agree to rank $k$.  Therefore this case does not occur.

Alternatively, it must be the case that for every possible choice of $x, y, z, y', z'$ in~$\mathcal{L}$ such that $y' \lessdot y \lessdot x \gtrdot z \gtrdot z'$ for which $y$ and $z$ do not cover anything in common, there is no element that covers both $y'$ and $z'$.  For $i \geq 0$, let $a_i$ be the size of the $i$th rank of $\mathcal{F}_n$.  By \cref{cor:upho_gfs}, this is also the size of the $i$th rank of~$\mathcal{L}$. Moreover, these numbers satisfy the recurrence relation $a_{i} = n a_{i - 1} - (n - 1)a_{i - 2}$ for all $i \geq 2$.  Now let us count cover relations between ranks $k$ and $k + 1$, which we refer to as ``edges,'' because they are edges in the Hasse diagram of $\mathcal{L}$.  On one hand, since $\mathcal{L}$ is upho with core $M_n$, there are $n \cdot a_k$ of these edges.  On the other hand, it follows from the defining hypothesis of this case that~$n \cdot a_{k - 1}$ of these edges belong to copies of $M_n$ with minimum elements at rank~$k - 1$ (i.e., all those subgraphs of the Hasse diagram are edge-disjoint), so that there are $n \cdot a_k - n \cdot a_{k - 1} = a_{k + 1} - a_{k - 1}$ other edges.  Observe also that there are at most $a_{k - 1}$ elements at rank $k + 1$ that are the maximum elements of the copies of $M_n$ whose minimum elements lie at rank~$k - 1$, and strictly fewer than $a_{k - 1}$ if some choice of $y$ and $z$ lie in two of these copies of $M_n$. Therefore, there are at least $a_{k + 1} - a_{k - 1}$ ``other'' elements, and again strictly more under the same condition.  If the inequalities are strict, this leads to a contradiction because there are more than $a_{k + 1} - a_{k - 1}$ ``other'' elements at rank $k + 1$, each of which covers at least one element, but only $a_{k + 1} - a_{k - 1}$ edges to connect them to. If the inequality is actually equality, then we instead get a contradiction because the~$x$ corresponding to our choice of $y$ and $z$, which covers at least two elements, must be among the ``other'' elements, and so there are still not enough edges available.

In any case we arrive at a contradiction from the assumption that $\mathcal{L}$ disagrees with $\mathcal{F}_n$ at rank $k + 1$. Therefore, it does agree, and the result follows by induction.
\end{proof}

\begin{remark}
From \cref{thm:dv_flip_unique} one can deduce that, for any $n \geq 2$, the flip construction $\mathcal{F}_n$ is the only \emph{modular} upho lattice with core $M_n$.
\end{remark}

These two constructions yield all possible upho lattices with core~$M_2$.

\begin{cor}
The only upho lattices with core $M_2=B_2$ are the the results of the dominating vertex and flip constructions, $\mathcal{D}_2$ and $\mathcal{F}_2$. (These are depicted in \cref{fig:b2_ex}, with $\mathcal{F}_2$ on the left and $\mathcal{D}_2$ on the right.) Hence, we have $\kappa(M_2)=2$.
\end{cor}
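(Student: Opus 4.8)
The plan is to leverage \cref{thm:dv_flip_unique}: since that result says any upho lattice with core $M_2$ that agrees with $\mathcal{D}_2$ (resp.\ $\mathcal{F}_2$) up to rank three must equal it, it suffices to prove that \emph{every} upho lattice $\mathcal{L}$ with core $B_2$ agrees with one of $\mathcal{D}_2$ or $\mathcal{F}_2$ up to rank three. Combined with \cref{thm:dv_flip_lattices}, which guarantees $\mathcal{D}_2 \neq \mathcal{F}_2$, this yields $\kappa(M_2)=2$. Throughout I would use two facts. First, by \cref{cor:upho_gfs} the rank sizes of $\mathcal{L}$ are the coefficients of $\chi^*(B_2;x)^{-1} = (1-x)^{-2} = \sum_{i\geq 0}(i+1)x^i$, so ranks $0,1,2,3$ have $1,2,3,4$ elements respectively. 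Second, since $V_p \cong \mathcal{L}$ and $\mathcal{L}$ has exactly two atoms, every element of $\mathcal{L}$ has exactly two covers; hence there are exactly $3\cdot 2 = 6$ cover relations (``edges'') from rank two to rank three.

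First I would check that $\mathcal{L}$ is forced up to rank two. Writing $s_1,s_2$ for the atoms, the join $u \coloneqq s_1 \vee s_2$ has rank two (it is the top of the core $B_2$) and covers both $s_1$ and $s_2$; the remaining cover of $s_i$ is an element $a$ (over $s_1$) resp.\ $b$ (over $s_2$), and a short argument shows $a,b,u$ are distinct, so rank two is exactly $\{u,a,b\}$. Next, applying $V_{s_1}\cong\mathcal{L}$, whose atoms are the covers $u,a$ of $s_1$, the core of $V_{s_1}$ is $[s_1, u\vee a]\cong B_2$; thus $w_1 \coloneqq u\vee a$ has rank three and covers both $u$ and $a$. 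Symmetrically, $w_2 \coloneqq u\vee b$ has rank three and covers $u$ and $b$. The whole analysis then splits on the dichotomy $w_1=w_2$ versus $w_1\neq w_2$.

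The technical heart is a counting argument at rank three, using that rank three has exactly four elements, that each rank-two element has exactly two covers, and that every rank-three element covers at least one rank-two element. If $w_1\neq w_2$, then $\{w_1,w_2\}$ are precisely the two covers of $u$, leaving one further cover for each of $a$ and $b$; the lattice property (any rank-three element lying above two rank-two elements must be their unique join) together with the edge count forces these to be two new, distinct elements each covering a single rank-two element, reproducing the rank-three truncation of $\mathcal{F}_2$. If instead $w_1=w_2=:w$, then $w$ covers all of $u,a,b$, and the three remaining edges must attach to three distinct new elements covering $u$, $a$, $b$ respectively (any coincidence would either force an element to equal $w$ via the join property, or leave fewer than four rank-three elements, or produce an element with no lower cover), reproducing the rank-three truncation of $\mathcal{D}_2$. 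I expect the main obstacle to be the bookkeeping in this case analysis: one must rule out every way the elements $w_1,w_2$ and the ``pendant'' covers could coincide, and the cleanest tools for this are the rigid rank-size constraint $a_3=4$ and the uniqueness of joins in the lattice. Once the two rank-three pictures are identified with those of $\mathcal{D}_2$ and $\mathcal{F}_2$, \cref{thm:dv_flip_unique} finishes the proof.
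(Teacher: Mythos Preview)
Your proposal is correct and follows essentially the same approach as the paper: use \cref{cor:upho_gfs} to pin down the rank sizes, use the upho property to see every element has exactly two covers, carry out a case analysis to show that up to rank three the lattice must look like $\mathcal{D}_2$ or $\mathcal{F}_2$, and then invoke \cref{thm:dv_flip_unique}. The paper simply declares this case analysis ``routine'' without writing it out, whereas you make it explicit via the dichotomy $w_1=w_2$ versus $w_1\neq w_2$ and the edge/element count at rank three; your added detail is sound.
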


\begin{proof}
Let $\mathcal{L}$ be an upho lattice with core $M_2$. By \cref{cor:upho_gfs}, $\mathcal{L}$ has $n+1$ elements at rank $n$ for all $n \geq 0$. Also, every element in $\mathcal{L}$ is covered by exactly two elements in the rank above, and covers at least one element in the rank below. Using these facts, it is routine to check that the only possibilities for what $\mathcal{L}$ could look like up to rank three are $\mathcal{D}_2$ or $\mathcal{F}_2$. The statement then follows from \cref{thm:dv_flip_unique}.
\end{proof}

However, for $n \geq 3$, there are more possibilities beyond these two for upho lattices with core $M_n$. In order to understand these possibilities, and in order to prove \cref{thm:dv_flip_lattices} as promised, we need to return to monoids. The following theorem gives us a rich source of upho lattices with core~$M_n$.

\begin{thm} \label{thm:rank2_monoids}
Let $n \geq 2$ and let $f\colon [n] \to [n]$ be any function. Define the homogeneously finitely generated monoid $M(f)$ by
\[ M(f) \coloneqq \langle s_1,\ldots,s_n\mid s_1s_{f(1)}=s_2s_{f(2)}= \cdots = s_ns_{f(n)}\rangle.\]
Then $M(f)$ is left-cancellative and any two elements in $M(f)$ have a least common right multiple. Hence, $\mathcal{L}(f)\coloneqq (M(f),\leq_L)$ is an upho lattice, with core $M_n$.
\end{thm}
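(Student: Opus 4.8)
The plan is to verify the two algebraic properties asserted — left-cancellativity and existence of least common right multiples — and then read off the lattice structure and the core formally. Throughout, write $w \coloneqq s_1 s_{f(1)} = \cdots = s_n s_{f(n)}$ for the common value of the relations, so that $w$ is a right multiple of every atom, with $s_i \leq_L w$ for all $i$. Since $M(f)$ is homogeneously finitely generated by construction, once left-cancellativity is established, \cref{lem:monoids} immediately gives that $\mathcal{L}(f) = (M(f),\leq_L)$ is a finite-type $\mathbb{N}$-graded upho poset; the remaining work is to upgrade ``poset'' to ``lattice'' and to compute the core.

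For left-cancellativity together with least common right multiples I would use Dehornoy's theory of subword reversing (see \cite{dehornoy1999gaussian, dehornoy2015foundations}). The presentation is right-complemented: for each ordered pair $i \neq j$ the unique relation between $s_i$ and $s_j$ is $s_i s_{f(i)} = s_j s_{f(j)}$, so the associated right-complement is $\theta(s_i,s_j) = s_{f(i)}$ (and $\theta(s_i,s_i)$ is empty). By Dehornoy's completeness criterion it suffices to check the cube condition on triples of generators, after which word reversing is complete and yields both that $M(f)$ is left-cancellative and that any two elements admitting a common right multiple admit a right-lcm. This verification is the crux of the argument, and it is made tractable by the following collapse: the right-lcm of any two distinct atoms is the single element $w$, and $\theta(s_i,s_j)$ depends only on $i$. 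Consequently each iterated complement on the two sides of the cube condition reduces to the empty word (as $\theta(s_m,s_m)$ is empty), while the degenerate triples with repeated generators are immediate, so the condition holds for every triple.

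Reversing only produces a right-lcm for pairs that admit \emph{some} common right multiple, so I would next show common right multiples always exist. The key is the commutation relation $s_i w = w s_{f^2(i)}$, which is immediate from the defining relations: writing the trailing $w$ as $s_{f(i)} s_{f^2(i)}$ gives $s_i w = s_i s_{f(i)} s_{f^2(i)} = w s_{f^2(i)}$. Iterating, $s_a w^{k-1} = w^{k-1} s_m$ for the generator $m = f^{2(k-1)}(a)$, and since $w^k = w^{k-1} s_m s_{f(m)}$ we get $s_a w^{k-1} \leq_L w^k$. An induction on length then shows that every element of length $k$ left-divides $w^k$: if $u = s_a u_1$ with $u_1 \leq_L w^{k-1}$, then $u \leq_L s_a w^{k-1} \leq_L w^k$. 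Hence any two elements of lengths $p$ and $q$ both left-divide $w^{\max(p,q)}$, giving a common right multiple; combined with the previous paragraph, genuine least common right multiples (i.e.\ joins in $\leq_L$) always exist.

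Finally I would assemble the lattice and core statements. Joins exist by the above, and meets exist by a general argument: for $a,b \in M(f)$ the common lower bounds form a finite, nonempty set $D$, and if $d_1,d_2 \in D$ are maximal, then $a$ is a common right multiple of $d_1,d_2$, so their join $d_1 \vee d_2$ (the least such) satisfies $d_1 \vee d_2 \leq_L a$ and likewise $d_1 \vee d_2 \leq_L b$; thus $d_1 \vee d_2 \in D$, and maximality forces $d_1 = d_2$, so $D$ has a maximum. Hence $\mathcal{L}(f)$ is an upho lattice. For the core, the atoms $s_1,\ldots,s_n$ are distinct because the relations are homogeneous of length two (so they never equate generators), and each satisfies $s_i \lessdot w$; since the join of any two distinct atoms already has rank two and lies below $w$, it equals $w$, whence $s_1 \vee \cdots \vee s_n = w$ and $[\hat{0},w] = \{\hat{0}, s_1, \ldots, s_n, w\} \cong M_n$. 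The main obstacle is the completeness/cube-condition step, but the uniform collapse of all atom-lcms to $w$ keeps it routine.
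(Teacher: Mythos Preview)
Your proof is correct but takes a genuinely different route from the paper's. The paper proves left-cancellativity via an elementary self-contained lemma (\cref{lem:rels_left_cancel}): any homogeneous presentation in which each generator begins at most one relation-word is automatically left-cancellative. It then invokes a BEZ-type lemma (\cref{lem:bez}) to reduce the lattice property to two local checks: existence of \emph{some} common upper bound for any pair (proved by an explicit, somewhat intricate identity $s_{i_1}\cdots s_{i_k}s_{f(i_k)}s_{f^3(i_{k-1})}\cdots s_{f^{2k-1}(i_1)} = s_{j_1}\cdots s_{j_k}s_{f(j_k)}s_{f^3(j_{k-1})}\cdots s_{f^{2k-1}(j_1)}$), and existence of joins for pairs covering a common element (trivially $w$). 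Your approach instead imports Dehornoy's reversing machinery: the observation that $\theta(s_i,s_j)=s_{f(i)}$ depends only on $i$ collapses the cube condition immediately, and this single check yields both left-cancellativity and conditional right-lcms at once. Your common-upper-bound argument is in fact cleaner than the paper's: from $s_i w = w s_{f^2(i)}$ you get that every length-$k$ element divides $w^k$, a stronger and more transparent statement than the paper's pair-dependent construction. The trade-off is that the paper's argument is entirely self-contained, whereas yours relies on the (nontrivial) completeness theorem for right-complemented presentations satisfying the cube condition; a reader unfamiliar with \cite{dehornoy1999gaussian, dehornoy2015foundations} would have to take that on faith.
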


Before we prove \cref{thm:rank2_monoids}, let us explain how \cref{thm:dv_flip_lattices} is an easy corollary of \cref{thm:rank2_monoids}.

\begin{proof}[Proof of \cref{thm:dv_flip_lattices}, assuming \cref{thm:rank2_monoids}]
The results of both the dominating vertex and flip constructions can be obtained as $\mathcal{L}(f)$ from particularly nice choices of $f$ in \cref{thm:rank2_monoids}. Indeed, these two constructions correspond to the two ``extreme'' cases of functions $f\colon [n]\to [n]$ in terms of their fiber structures.

Choosing any $f\colon [n] \to [n]$ which has image of size $1$ yields the dominating vertex construction $\mathcal{D}_n$ for $\mathcal{L}(f)$. To see this, for convenience consider the function $f$ defined by $f(1) = f(2) =\cdots = f(n) = n$. Let $S = \{s_1,\ldots,s_n\}$ and again let $S^*$ denote the set of words over the alphabet $S$. Then, for any two words $v,w \in S^*$ of the same length, we have $vs_n=ws_n$ in $M(f)$, while $vs_i$ and $ws_j$ will be distinct, and be distinct from~$vs_n$, for all $1 \leq i,j \leq n-1$. Thus, $\mathcal{L}(f)$ is indeed~$\mathcal{D}_n$.

On the other hand, choosing any bijective $f\colon [n] \to [n]$ yields the flip construction~$\mathcal{F}_n$ for $\mathcal{L}(f)$. To see this, for convenience consider the identity function $f$ on~$[n]$.  By the symmetry with respect to left and right multiplication of the defining relations for this $f$, we have that $M(f)$ is not just left-cancellative, but also \emph{right-cancellative}. So now suppose that $v,w \in M(f)$ are two elements which (when viewed as words in~$S^*$) have length $k$, and which satisfy~$vs_i=ws_j$ for some $1\leq i,j \leq n$. There are two possibilities. If $i=j$, then by right-cancellativity we have $v=w$. If $i\neq j$, then we claim that there is $u\in M(f)$ of length~$k-1$ such that $v=us_i$ and $w=us_j$. Indeed, to convert the final $s_i$ in $vs_i$ to an $s_j$, we must have $vs_i=us_is_i=us_js_j$ for some $u \in M(f)$ of length $k-1$. Then since $ws_j  = vs_i = us_js_j$, by right-cancellativity again we get $v=us_i$ and $w=us_j$, as claimed. Thus two elements of the same rank in $\mathcal{L}(f)$ have a common cover exactly when they cover something in common, and so $\mathcal{L}(f)$ is indeed $\mathcal{F}_n$.
\end{proof}

In order to prove \cref{thm:rank2_monoids}, we need some preparatory lemmas. First, to prove that the monoid $M(f)$ in \cref{thm:rank2_monoids} is left-cancellative, we will use the following simple lemma.

\begin{lemma} \label{lem:rels_left_cancel}
Let $M=\langle S \mid R \rangle$ be a homogeneously finitely generated monoid such that for each generator $s\in S$, there is at most one word that begins with $s$ which appears in the relations in~$R$. Then $M$ is left-cancellative.
\end{lemma}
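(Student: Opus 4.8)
The plan is to reduce to cancelling a single generator and then to analyze rewriting derivations, using the uniqueness hypothesis to control exactly those relation applications that affect the leftmost letter. Write $\equiv$ for the congruence on the free monoid $S^*$ generated by $R$. First I would observe that it suffices to prove, for every generator $a \in S$ and all words $u,v \in S^*$, that $au \equiv av$ implies $u \equiv v$: the general cancellation statement $pu \equiv pv \Rightarrow u \equiv v$ then follows by stripping off the letters of a word representing $p$ one at a time. Discarding the trivial relations $w=w$, I may assume the two sides of every relation begin with distinct letters, since if both sides began with the same letter $s$ the uniqueness hypothesis would force them to be the same word. An equality $au \equiv av$ is witnessed by a finite derivation $au = z_0, z_1, \ldots, z_m = av$, where each $z_{i+1}$ is obtained from $z_i$ by replacing one occurrence of a side of a relation by the other side; because $R$ is homogeneous, all the $z_i$ have the same length. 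I will call a step \emph{internal} if the replaced factor lies entirely in positions $\geq 2$, and a \emph{front} step if it begins at position $1$.

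The key consequence of the hypothesis is that front steps are \emph{deterministic and reversible}. If $z_i$ begins with the letter $a$, then a front step must replace a prefix of $z_i$ equal to a relation word beginning with $a$; by assumption there is at most one such word, say $w_a$. Thus a front step applied to a word with first letter $a$ is forced to replace the prefix $w_a$ by the unique word $w_b$ paired with it in a relation, changing the first letter to some $b \neq a$, and the inverse replacement $w_b \mapsto w_a$ is again the unique front step available at a word beginning with $b$. Consequently the sequence of first letters $f_0 = a, f_1, \ldots, f_m = a$ changes precisely at the front steps.

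The heart of the argument is to convert the derivation into one that uses \emph{only internal steps}. Once this is done, every $z_i$ begins with the common letter $a$, and deleting this leading $a$ from each $z_i$ turns the derivation into a valid derivation $u \equiv v$, which finishes the proof. To eliminate front steps I would induct on their number: an internal step commutes with a front step whenever their supports are disjoint, so one tries to slide a front step next to the succeeding front step (which, by reversibility and determinism, is its inverse) and then delete the cancelling pair. The main obstacle — and the only genuinely delicate point — is the case where an internal step acts on a factor overlapping the prefix $w_a$ used by an adjacent front step, so that the two steps cannot simply be commuted. Here the determinism hypothesis is exactly what rescues the argument: because the prefix relation word $w_a$ is forced, such a configuration is an instance of two relations overlapping in a common prefix, and the resulting local ambiguity can be resolved by rewriting inside the copies of $w_a$ and $w_b$ (a critical-pair analysis) so as to shorten the front excursion. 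Controlling these overlaps and verifying that each resolution strictly decreases a suitable complexity measure (for instance the number of front steps, with ties broken by the total support near the boundary) is where the real work lies; everything else is bookkeeping. I would also note as an alternative that the hypothesis makes the presentation \emph{right-complemented} in the sense of Dehornoy, so that left-cancellativity could instead be extracted from the subword-reversing machinery of \cite{dehornoy1999gaussian, dehornoy2015foundations}, but the direct derivation argument above keeps the lemma self-contained.
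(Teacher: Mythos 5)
Your setup---reducing to cancellation of a single generator, classifying the steps of a derivation as front or internal, and noting that a front step applied to a word beginning with $a$ must consume the unique relation word beginning with $a$---is sound and matches the skeleton of the paper's argument. But two things go wrong after that. First, front steps are \emph{not} deterministic: the hypothesis says there is at most one relation word $w_a$ beginning with $a$, but that word may be paired with \emph{several} different words in $R$. This is exactly the situation in the lemma's intended application, \cref{thm:rank2_monoids}, where the relations $s_1s_{f(1)}=s_2s_{f(2)}=\cdots=s_ns_{f(n)}$ pair each relation word with $n-1$ others. Consequently the front steps of a derivation from $au$ to $av$ trace out an arbitrary closed walk $a\to b\to c\to\cdots\to a$ on the letters, and a front step need not be the inverse of the succeeding one, so the plan of ``sliding a front step into its inverse and cancelling the pair'' does not apply as stated. (This particular defect could be repaired by closing $R$ under transitivity among relation words, which changes neither the monoid nor the hypothesis, but you do not do this.)

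Second, and more seriously, the core of your argument---eliminating front steps by commuting them past internal steps---is precisely the case you leave open: when an internal step rewrites letters lying inside the prefix copy of the relation word, the two steps do not commute, and you defer the resolution to an unspecified ``critical-pair analysis'' with an unspecified decreasing complexity measure. That is not bookkeeping; it is the entire difficulty of the lemma, and it is not clear it can be handled by local moves at all. The paper's proof avoids this issue completely by inducting on the \emph{length} of the words being cancelled: writing $s_iw_{s_i}$ for the unique relation word beginning with $s_i$, the word just after the $i$th front step is $s_iw_{s_i}v_{i-1}$ and the word just before the $(i+1)$st is $s_iw_{s_i}v_i$, and since the intermediate internal steps never touch position $1$, one gets $w_{s_i}v_{i-1} = w_{s_i}v_i$ as elements of $M$; the inductive hypothesis (cancellation for shorter words, applied letter by letter to $w_{s_i}$) then gives $v_{i-1}=v_i$ in $M$ directly, with no rearrangement of the derivation and no analysis of overlaps. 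Chaining these equalities over all front steps yields $b = w_{s}v_0 = w_{s}v_{k-1} = c$. If you want to salvage your approach, either carry out the overlap analysis in full, or replace it with this length induction, which is what renders the interaction between internal steps and the prefix harmless.
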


\begin{proof}
To prove that the monoid $M$ is left-cancellative, a standard inductive argument implies it is enough to show that whenever $sb=sc$ for $b,c\in M$ arbitrary elements and $s\in S$ a generator, then we have $b=c$. So suppose that $b,c \in M$ and~$s\in S$ satisfy $sb=sc$. Because the relations defining $M$ are homogeneous, it must be that~$b$ and $c$ (when viewed as words in~$S^*$) have the same length $n \geq 0$. We will prove that $b=c$ by induction on $n\geq 0$. If~$n=0$ the claim is trivial, so suppose that~$n \geq 1$ and that the claim has been proved for smaller values of $n$. By the same standard inductive argument, knowing the claim for smaller values of $n$ also means that whenever $ab=ac$ for \emph{any} elements $a,b,c \in M$ for which the length of $ab$ (which must be the length of $ac$) is less than $n$, we have $b=c$.

By supposition, we can convert $sb$ (viewed as a word in $S^*$) to $sc$ by applying a series of the relations in $R$. If when applying these relations, we never change the first letter, then clearly $b=c$ in $M$. So suppose that we do change the first letter at some point. For a generator $t \in S$, let $w_t$ denote the unique word (if it exists) for which $tw_t$ appears in the relations in~$R$. Then set $s_0 \coloneqq s$, and suppose that as we convert~$sb$ to~$sc$, the first time we change the first letter is~$s_0w_{s_0}v_0 = s_1w_{s_1}v_0$ for some other generator~$s_1\in S$ and some word $v_0\in S^*$. Notice that we have~$b=w_{s_0}v_0$ in $M$ because by supposition we never modified the first letter before getting to $s_0w_{s_0}v_0$. Then suppose the next time we change the first letter, it is $s_1w_{s_1}v_1=s_2w_{s_2}v_1$ for some $s_2\in S$, $v_1\in S^*$. The same reasoning as before says that $w_{s_1}v_0=w_{s_1}v_1$ in $M$. By the inductive hypothesis, because $w_{s_1}v_0$ has length $n-1$, this actually means that~$v_0=v_1$ in $M$. We continue defining $s_i\in S$ and $v_i \in S^*$ in the same fashion, until the last time we change the first letter, say to $s_kw_{s_k}v_{k-1}$. By the same inductive argument repeated $k - 1$ times, we have $v_0=v_1=\cdots=v_{k-1}$ in $M$. Notice also that we must have $s_k=s$, and that~$w_{s_k}v_{k-1}=c$ in $M$ because after~$s_kw_{s_k}v_{k-1}$ we never change the first letter. But this means that $b=w_{s}v_0=w_sv_{k-1}=c$, as claimed.
\end{proof}

Next, to prove that the monoid $M(f)$ in \cref{thm:rank2_monoids} has least common multiples, we will use the following variant of a lemma due to Bj\"{o}rner--Edelman--Ziegler~\cite{bjorner1990hyperplane} which says that this lattice property can be checked locally.

\begin{lemma}[{cf.~\cite[Lemma~2.1]{bjorner1990hyperplane}}] \label{lem:bez}
Let $\mathcal{P}$ be a finite-type $\mathbb{N}$-graded poset. Suppose that for any $x, y\in\mathcal{P}$, $x$ and $y$ have an upper bound, and moreover, if $x$ and $y$ both cover an element $z$, then they have a least upper bound $x \vee y$. Then~$\mathcal{P}$ is a lattice.
\end{lemma}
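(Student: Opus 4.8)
The plan is to first reduce the lattice property to the existence of joins, and then to establish joins by working inside finite intervals. Since $\mathcal{P}$ has a minimum $\hat 0$ and all intervals are finite, a join-semilattice structure automatically upgrades to a lattice: given $x,y$, the set $D=\{z : z\le x,\ z\le y\}$ of common lower bounds is nonempty ($\hat 0\in D$) and finite ($D\subseteq[\hat 0,x]$), so if all binary (hence all finite) joins exist, then $\bigvee D$ exists, lies below both $x$ and $y$, and is therefore the meet $x\wedge y$. So it suffices to prove that $\mathcal{P}$ is a join-semilattice, i.e.\ that every pair with a common upper bound has a least upper bound (and by the upper-bound hypothesis, every pair does have one).

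The heart of the matter is the following finite, bounded version, which I would isolate and prove first: \emph{any finite poset $Q$ with $\hat 0$ and $\hat 1$ in which every pair of elements covering a common element has a join is a lattice.} Each interval $[\hat 0,w]$ of $\mathcal{P}$ is such a $Q$: it is finite and bounded with top $w$, and if $a'$ and $a''$ both cover a common element inside it, then their join in $\mathcal{P}$, guaranteed by the covering-join hypothesis, is $\le w$ and hence is their join in $[\hat 0,w]$. I would prove the finite bounded statement by a double induction: an outer induction on the height $\rho(\hat 1)$ of $Q$, and, for fixed $Q$, an inner induction on the rank sum $\rho(a)+\rho(b)$ of the pair whose join is sought. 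When $a$ and $b$ are both atoms they cover $\hat 0$, so their join exists by hypothesis; this is the inner base case. Otherwise at least one of them, say $a$, is not an atom, so it has a lower cover $a_0\lessdot a$ with $\rho(a_0)\ge 1$. The inner hypothesis (applied to $a_0$ and $b$, which have the common upper bound $\hat 1$) yields $p:=a_0\vee b$. If $a\le p$ then $p=a\vee b$ and we are done; otherwise, since $a,p\ge a_0$, one has $a\vee b=a\vee p$, which lets us pass to the interval $[a_0,\hat 1]$. This interval is bounded of strictly smaller height and inherits the covering-join hypothesis, so by the outer induction it is a lattice; thus $a\vee p$ exists there, and a short check shows it equals $a\vee b$ in $Q$.

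Finally I would glue the local joins into global ones. Given $x,y\in\mathcal{P}$ with a common upper bound $w$, the interval $[\hat 0,w]$ is a lattice by the above, so $j:=x\vee_{[\hat 0,w]}y$ exists; to see that $j$ is the true join, take any common upper bound $u$ of $x,y$, use the upper-bound hypothesis to find a common upper bound $v$ of $u$ and $w$, and compute $x\vee y$ in the lattice $[\hat 0,v]$: it lies below $w$, so it agrees with $j$, and it lies below $u$, giving $j\le u$. Hence $j$ is the least common upper bound, $\mathcal{P}$ is a join-semilattice, and by the first paragraph it is a lattice. I expect the main obstacle to be the finite bounded lemma and making its induction close: in particular, justifying the reduction $a\vee b=a\vee p$ and checking that $[a_0,\hat 1]$ genuinely inherits the covering-join hypothesis, so that the outer induction on height may be invoked. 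The reduction to join-semilattice and the gluing step are then routine bookkeeping.
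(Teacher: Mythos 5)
Your proof is correct, and its overall skeleton matches the paper's: localize to the finite intervals $[\hat{0},w]$ provided by the upper-bound hypothesis, apply a finite ``BEZ-type'' lemma there, globalize the join by a second application of the upper-bound hypothesis (your $v$, an upper bound of $u$ and $w$, plays exactly the role of the paper's $z''$), and then obtain meets for free once all joins exist. The genuine difference is that the paper treats the finite bounded case as a black box, citing the Bj\"{o}rner--Edelman--Ziegler lemma \cite[Lemma~2.1]{bjorner1990hyperplane} (and it likewise gets meets by citing \cite[Proposition~3.3.1]{stanley2012ec1}), whereas you reprove that finite lemma from scratch by a double induction, outer on the height of $Q$ and inner on the rank sum of the pair, using the reduction $a \vee b = a \vee p$ and the passage to $[a_0,\hat{1}]$. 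Your induction does close: the sets of upper bounds of $\{a,b\}$ and of $\{a,p\}$ coincide, and intervals inherit both their cover relations and the covering-join hypothesis from the ambient poset, so the outer inductive hypothesis legitimately applies to $[a_0,\hat{1}]$. What this buys is self-containedness, at the cost of length. One small repair is needed: you state your isolated finite lemma for arbitrary finite bounded posets $Q$, but your proof invokes a rank function $\rho$, which such a $Q$ need not possess. Either add gradedness to its hypothesis (harmless here, since every interval $[\hat{0},w]$ of a finite-type $\mathbb{N}$-graded poset is graded), or run the same induction on heights, noting that a non-atom $a > \hat{0}$ has a lower cover $a_0 \neq \hat{0}$, so the height sum still drops and $[a_0,\hat{1}]$ still has strictly smaller height.
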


\begin{proof}
As indicated, this is essentially a special case of the famous ``BEZ lemma''~\cite[Lemma~2.1]{bjorner1990hyperplane}. Let $\mathcal{P}$ be as in the statement of the lemma. We first show that for any~$x,y \in \mathcal{P}$, their join $x \vee y$ exists in $\mathcal{P}$. By assumption, an upper bound for $x$ and $y$ exists, call it $z \in \mathcal{P}$. Now, $[\hat{0},z]$ is a finite poset, and it has the property that for any pair of elements covering a common element, their join exists, so the BEZ lemma applies and says that $[\hat{0},z]$ is a lattice. In particular, a join of $x$ and $y$ exists in $[\hat{0},z]$, call it $w$. Now, we claim this $w$ is actually the least upper bound of $x$ and~$y$ in $\mathcal{P}$ as well. Indeed, because $[\hat{0},z]$ is an interval, it is clear that there cannot be an upper bound in $\mathcal{P}$ for $x$ and $y$ that is smaller than $w$. And so if $z' \in \mathcal{P}$ is any upper bound for $x$ and $y$, then, letting $z''$ be an upper bound for $w$ and $z'$, the BEZ lemma applies also to $[\hat{0},z'']$ to show that indeed $w \leq z'$. 

Having shown that joins of all pairs of elements exist, it is easy to show that meets exist, i.e., that $\mathcal{P}$ is a lattice. Let $x,y \in \mathcal{P}$. Then $[\hat{0},x\vee y]$ is a finite join semilattice with a minimum, hence is a lattice by~\cite[Proposition~3.3.1]{stanley2012ec1}. So the meet $x \wedge y$ exists in $[\hat{0},x\vee y]$, and this must be their meet in $\mathcal{P}$ as well.
\end{proof}

With \cref{lem:rels_left_cancel} and \cref{lem:bez}, we are now ready to prove \cref{thm:rank2_monoids}.

\begin{proof}[Proof of \cref{thm:rank2_monoids}]
By construction, $M(f)$ is a homogeneously finitely generated monoid. Moreover, it clearly satisfies the condition of~\cref{lem:rels_left_cancel}, so it is left-cancellative. Hence, by~\cref{lem:monoids}, $\mathcal{L}(f)$ is a (finite-type $\mathbb{N}$-graded) upho poset. What remains is to show that $\mathcal{L}(f)$ is a lattice.

First, let us show that for every $v,w \in \mathcal{L}(f)$, they have some upper bound. It suffices to prove this for $v,w \in M(f)$ which have the same length (when viewed as words in $S^*$, where $S=\{s_1,\ldots,s_n\}$), because if one has a shorter length than the other we can just multiply the shorter one on the right by some generators. So suppose $v=s_{i_1}\cdots s_{i_k}$ and $w=s_{j_1}\cdots s_{j_k}$. Then we claim that
\[s_{i_1}\cdots s_{i_k}s_{f(i_k)}s_{f^3(i_{k-1})}\cdots s_{f^{2k-1}(i_1)} = s_{j_1}\cdots s_{j_k}s_{f(j_k)}s_{f^3(j_{k-1})}\cdots s_{f^{2k-1}(j_1)}.\]
We prove this claim by induction on $k$. By successively applying relations in $M(f)$, we have that $s_{i_1}\cdots s_{i_k}s_{f(i_k)}\cdots s_{f^{2k-1}(i_1)}$ is equal to
\begin{gather*}
s_{i_1}\cdots s_{i_{k-1}} (s_{i_k}s_{f(i_k)})s_{f^3(i_{k-1})}\cdots s_{f^{2k-1}(i_1)} \\
= s_{i_1}\cdots s_{i_{k-1}}s_{f(i_{k-1})}s_{f^2(i_{k-1})}s_{f^3(i_{k-1})}\cdots s_{f^{2k-1}(i_1)}\\
= s_{i_1}\cdots s_{f(i_{k-2})}s_{f^2(i_{k-2})}s_{f^2(i_{k-1})}s_{f^3(i_{k-1})}\cdots s_{f^{2k-1}(i_1)}\\
\vdots\\
= s_{i_1}s_{f(i_1)}s_{f^2(i_1)}s_{f^2(i_2)}\cdots s_{f^2(i_{k-2})}s_{f^2(i_{k-1})}s_{f^3(i_{k-1})}\cdots s_{f^{2k-1}(i_1)}\\
= s_{j_1}s_{f(j_1)}s_{f^2(i_1)}s_{f^2(i_2)}\cdots s_{f^2(i_{k-2})}s_{f^2(i_{k-1})}s_{f^3(i_{k-1})}\cdots s_{f^{2k-1}(i_1)} \,.
\end{gather*}
The suffix of this word of length $2k-2$ is of the form described by the claim, so by induction it is equal to
\[s_{f^2(j_1)}s_{f^2(j_2)}\cdots s_{f^2(j_{k-2})}s_{f^2(j_{k-1})}s_{f^3(j_{k-1})}\cdots s_{f^{2k-1}(j_1)} \,.\]
Then reversing the steps in the prior chain of equalities (now with $j$'s instead of $i$'s), we conclude that $s_{i_1} \cdots s_{i_k} s_{f(i_k)}\cdots s_{f^{2k-1}(i_1)}$ is equal to
\begin{gather*}
s_{j_1} s_{f(j_1)} s_{f^2(i_1)} s_{f^2(i_2)}\cdots s_{f^2(i_{k-2})} s_{f^2(i_{k-1})} s_{f^3(i_{k-1})} \cdots s_{f^{2k-1}(i_1)} \\
=s_{j_1} s_{f(j_1)} s_{f^2(j_1)}s_{f^2(j_2)} \cdots s_{f^2(j_{k-2})} s_{f^2(j_{k-1})} s_{f^3(j_{k-1})} \cdots s_{f^{2k-1}(j_1)} \\
= s_{j_1} \cdots s_{j_k} s_{f(j_k)}\cdots s_{f^{2k-1}(j_1)},
\end{gather*}
as claimed.

Next, let us show that if $v,w \in \mathcal{L}(f)$ cover a common element, then they have a join $v \vee w \in \mathcal{L}(f)$. Because $\mathcal{L}(f)$ is upho, we may assume that the common element they cover is $\hat{0}$, i.e., we may assume that $v$ and $w$ are atoms. And of course we may assume $v\neq w$. So $v=s_i$ and $w=s_j$ for some $1 \leq i, j \leq n$ with $i \neq j$. But then it is clear that their join is $s_is_{f(i)}=s_js_{f(j)}$. Indeed, let $u \in S^*$ be a word which as an element of $M(f)$ is greater than $s_i$ and $s_j$. Since $u$ cannot start with both $s_i$ and~$s_j$, without loss of generality, assume that it does not start with $s_i$. Because~$s_i \leq u$, we must be able to convert the first letter in $u$ to $s_i$ using relations in $M(f)$. At some point when we do this conversion, our word must start with $s_is_{f(i)}$, so indeed we have $s_is_{f(i)} \leq u$, as desired. 

Thus, $\mathcal{L}(f)$ satisfies the conditions of \cref{lem:bez}, and so $\mathcal{L}(f)$ is a lattice. That its core is $M_n$ is clear, since $s_1s_{f(1)}=\cdots=s_ns_{f(n)}$ is the join of the atoms.
\end{proof}

To finish the proof of our main result in this section, we need to think about when different functions $f\colon [n]\to [n]$ yield different upho lattices $\mathcal{L}(f)$ in \cref{thm:rank2_monoids}. As we hinted at in the proof of \cref{thm:dv_flip_lattices}, what matters is the structure of the fibers of~$f$. More precisely, we have the following.

\begin{lemma} \label{lem:partitions}
Let $n\geq 2$. For any partition $\lambda = (\lambda_1,\lambda_2,\ldots,\lambda_\ell)\vdash n$ of the integer~$n$, define a function $f_{\lambda} \colon [n] \to [n]$ by letting
\begin{align*}
    f_{\lambda}(1)= f_{\lambda}(2)= \cdots= f_{\lambda}(\lambda_1) &\coloneqq \lambda_1,\\
    f_{\lambda}(\lambda_1+1) = f_{\lambda}(\lambda_1+2) = \cdots = f_{\lambda}(\lambda_1+\lambda_2) &\coloneqq \lambda_1+\lambda_2,\\
    f_{\lambda}(\lambda_1+\lambda_2+1) = f_{\lambda}(\lambda_1+\lambda_2+2) = \cdots = f_{\lambda}(\lambda_1+\lambda_2+\lambda_3) &\coloneqq \lambda_1+\lambda_2+\lambda_3, \\
    &\;\; \vdots \\
    f(\lambda_1+\lambda_2+\cdots+\lambda_{{\ell-1}}+1)= \cdots = f(n) &\coloneqq n.
\end{align*}
(So $f_{\lambda}$ is idempotent, and $\lambda$ is the partition of the fiber sizes.) Then, with the notation of \cref{thm:rank2_monoids}, the lattices $\mathcal{L}(f_{\lambda})$ and $\mathcal{L}(f_{\nu})$ for partitions $\lambda,\nu\vdash n$ are isomorphic if and only if $\lambda=\nu$.
\end{lemma}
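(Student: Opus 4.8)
The plan is to produce an isomorphism invariant of the lattice $\mathcal{L}(f)$ that recovers the partition of the fiber sizes of $f$. Since $f_\lambda$ is constructed so that its fiber-size partition is exactly $\lambda$, such an invariant forces $\lambda = \nu$ whenever $\mathcal{L}(f_\lambda) \cong \mathcal{L}(f_\nu)$; the converse implication is trivial, since $\lambda = \nu$ gives $f_\lambda = f_\nu$ and hence literally the same monoid. I would work throughout inside the monoid $M(f)$ of \cref{thm:rank2_monoids}. First I would pin down the bottom of $\mathcal{L}(f)$: writing $S = \{s_1,\ldots,s_n\}$ and letting $m = s_1 s_{f(1)} = \cdots = s_n s_{f(n)}$ be the join of all atoms (the top of the core $M_n$), the rank-$2$ count $a_2 = n^2 - n + 1$ supplied by \cref{cor:upho_gfs}, together with left-cancellativity, shows that the only coincidence among length-$2$ words is the defining one. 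Thus $m$ covers all $n$ atoms, while every other rank-$2$ element is a word $s_i s_j$ with $j \neq f(i)$ covering the single atom $s_i$; I will call these the \emph{private children} of $s_i$, of which there are $n-1$ for each atom.

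The heart of the argument is to understand the rank-$3$ covers of the distinguished element $m$. Since $V_m \cong \mathcal{L}(f)$, the element $m$ has exactly $n$ covers, namely $m s_1, \ldots, m s_n$, distinct by left-cancellativity. The key computation is the identity
\[ s_i m \;=\; m\, s_{f^2(i)}, \]
obtained by writing $m = s_{f(i)} s_{f^2(i)}$ and then using $s_i s_{f(i)} = m$. Since $s_i m = \varphi_{s_i}^{-1}(m)$ is the preimage of the core top under the canonical isomorphism $\varphi_{s_i}\colon V_{s_i} \to \mathcal{L}(f)$ (where $\varphi_{s_i}^{-1}(q) = s_i q$ by \cref{lem:monoids_colors}), the element $s_i m$ covers, inside $V_{s_i}$ and hence inside $\mathcal{L}(f)$, the element $m$ together with the $n-1$ private children of $s_i$. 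Crucially, distinct atoms can satisfy $s_i m = s_{i'} m$---indeed, by the identity and left-cancellation, $s_i m = s_{i'} m$ iff $f^2(i) = f^2(i')$---so a single cover $m s_v$ of $m$ can absorb the private children of several atoms. Assembling this, I would prove that $m s_v$ covers exactly $m$ together with the private children of \emph{all} atoms $s_i$ for which $f^2(i) = v$. The inclusion ``$\supseteq$'' is immediate from the previous sentence; the reverse inclusion ``$\subseteq$'' is the step I expect to require the most care. The argument I have in mind is: if $m s_v$ covered a private child $s_i s_j$, then inside $V_{s_i}$ it would cover the two distinct atoms $\varphi_{s_i}(s_i s_j) = s_j$ and $\varphi_{s_i}(m) = s_{f(i)}$, forcing $\varphi_{s_i}(m s_v)$ to be the core top and hence $m s_v = s_i m = m s_{f^2(i)}$, i.e.\ $v = f^2(i)$.

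Granting this characterization, write $d(c)$ for the number of rank-$2$ elements covered by a cover $c$ of $m$. Then
\[ d(m s_v) \;=\; 1 + (n-1)\,\bigl|\{\, i \in [n] : f^2(i) = v \,\}\bigr|, \]
so the multiset $\{\, d(c) : m \lessdot c \,\}$ records the fiber sizes of $f^2$. For the idempotent $f = f_\lambda$ we have $f^2 = f_\lambda$, whose nonempty fibers have sizes exactly the parts $\lambda_1, \ldots, \lambda_\ell$ of $\lambda$, while the $n-\ell$ values outside the image each contribute $d = 1$. Hence this multiset equals $\{\, 1 + (n-1)\lambda_r : 1 \le r \le \ell \,\} \cup \{\, 1^{\,n-\ell}\,\}$.

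It remains to observe that this multiset is an isomorphism invariant of $\mathcal{L}(f)$: the element $m$ is intrinsically characterized as the unique rank-$2$ element lying above more than one atom, its covers are intrinsic, and $d(\cdot)$ counts only cover relations. Since each part contributes $1 + (n-1)\lambda_r \ge n > 1$, the parts of $\lambda$ are recovered as the multiset $\{(d-1)/(n-1) : d > 1\}$, discarding the copies of $1$; as these parts sum to $n$, they determine $\lambda$ uniquely. Therefore any isomorphism $\mathcal{L}(f_\lambda) \cong \mathcal{L}(f_\nu)$ equates the invariants and forces $\lambda = \nu$, completing the nontrivial direction. As a sanity check, for $n = 2$ the invariant is $\{3,1\}$ for $\mathcal{D}_2$ (partition $(2)$) and $\{2,2\}$ for $\mathcal{F}_2$ (partition $(1,1)$), consistent with \cref{fig:b2_ex}.
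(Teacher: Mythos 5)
Your proof is correct and takes essentially the same route as the paper: both distinguish $\mathcal{L}(f_\lambda)$ from $\mathcal{L}(f_\nu)$ via the multiset of lower-cover counts $1+(n-1)\lambda_r$ of rank-three elements, and your covers $m s_v$ with $v$ in the image of $f_\lambda$ are exactly the paper's elements $s_v^3$ (since $f_\lambda(v)=v$). If anything, yours is the more complete write-up: the paper simply asserts that these are the only rank-three elements covering more than one element, whereas you prove the corresponding statement via the identity $s_i m = m s_{f^2(i)}$ and the copy of the core sitting inside $V_{s_i}$.
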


\begin{proof}
Let $\lambda = (\lambda_1,\ldots,\lambda_\ell) \vdash n$ be a partition, and consider the upho lattice $\mathcal{L}(f_{\lambda})$. The elements $s_{\lambda_1}^3, \, s_{\lambda_1+\lambda_2}^3, \, s_{\lambda_1+\lambda_2+\lambda_3}^3, \, \ldots, \, s_n^3 \in \mathcal{L}(f_{\lambda})$ of rank three cover
\[\lambda_1(n-1)+1, \, \lambda_2(n-1)+1, \, \lambda_3(n-1)+1, \, \ldots, \, \lambda_{\ell}(n-1)+1\]
elements of rank two. (For example, $s_{\lambda_1}^3$ covers elements of the form $s_is_j$ where $1 \leq i \leq \lambda_1$ and $1 \leq j \leq n$, of which there are $\lambda_1(n-1)+1=\lambda_1n-(\lambda_1-1)$ because we have the equalities $s_1s_{\lambda_1}=s_2s_{\lambda_2}=\cdots=s_{\lambda_1}s_{\lambda_1}$.) Moreover, these are all the elements of rank three which cover more than one element. Hence, for two different partitions $\lambda,\nu\vdash n$, the multisets of numbers of lower covers for rank-three elements in $\mathcal{L}(f_{\lambda})$ and $\mathcal{L}(f_{\nu})$ will be different, and so $\mathcal{L}(f_{\lambda})$ and $\mathcal{L}(f_{\nu})$ must be non-isomorphic.
\end{proof}

\Cref{thm:rank2_monoids} and \cref{lem:partitions} together establish our main result of this section.

\begin{thm}
For any $n \geq 2$, we have $\kappa(M_n) \geq p(n)$, the number of integer partitions of~$n$.
\end{thm}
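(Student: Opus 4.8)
The plan is to assemble this count directly from the two preceding results, \cref{thm:rank2_monoids} and \cref{lem:partitions}, which between them have already done all the substantive work. The strategy is to exhibit an injection from the set of integer partitions of $n$ into the set of (isomorphism classes of) upho lattices having core $M_n$. Since $p(n)$ counts the domain of this injection, its existence immediately forces $\kappa(M_n) \geq p(n)$.

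Concretely, I would proceed as follows. First, for each partition $\lambda \vdash n$, invoke the function $f_\lambda \colon [n] \to [n]$ constructed in \cref{lem:partitions}, and apply \cref{thm:rank2_monoids} to the function $f = f_\lambda$ to produce an upho lattice $\mathcal{L}(f_\lambda)$ whose core is $M_n$. This step certifies that each $\lambda$ yields a genuine upho lattice with the correct core, i.e.\ that the map $\lambda \mapsto \mathcal{L}(f_\lambda)$ lands in the set whose cardinality is $\kappa(M_n)$. Second, invoke the injectivity half of \cref{lem:partitions}: for distinct partitions $\lambda \neq \nu$, the lattices $\mathcal{L}(f_\lambda)$ and $\mathcal{L}(f_\nu)$ are non-isomorphic. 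Together these two facts say that $\{\mathcal{L}(f_\lambda) : \lambda \vdash n\}$ is a collection of $p(n)$ pairwise non-isomorphic upho lattices, each with core $M_n$, which is exactly the desired lower bound.

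Since the two cited results are precisely what is needed, I expect there to be essentially no obstacle in this final step — the proof is a one-line combination. The genuine difficulty of the section has been front-loaded into \cref{thm:rank2_monoids} (establishing left-cancellativity via \cref{lem:rels_left_cancel} and the lattice property via the local criterion \cref{lem:bez}) and into \cref{lem:partitions} (distinguishing the lattices by the multiset of lower-cover counts among rank-three elements). The only point worth stating carefully is that $\kappa(M_n)$ counts upho lattices \emph{up to isomorphism}, so that the non-isomorphism guaranteed by \cref{lem:partitions} is exactly the notion of ``different'' relevant to the definition of $\kappa$; with that observation the bound follows at once.
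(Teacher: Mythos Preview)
Your proposal is correct and matches the paper's own proof exactly: the paper's entire argument is the single line ``This follows immediately from \cref{thm:rank2_monoids} and \cref{lem:partitions},'' and you have simply unpacked what that immediacy amounts to.
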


\begin{proof}
This follows immediately from \cref{thm:rank2_monoids} and \cref{lem:partitions}.
\end{proof}

\begin{remark}
After seeing the first version of this paper, David Speyer explained to us how our methods can be extended to show that $\kappa(M_n) \geq F_{n-3}$ for all $n \geq 3$, where~$F_n$ is the $n$th Fibonacci number. (Recall that the \dfn{Fibonacci numbers} are defined recursively by $F_0=0$, $F_1=1$, and $F_n = F_{n-1}+F_{n-2}$ for $n\geq 2$. Unlike the partition numbers, which grow super-polynomially but sub-exponentially, the Fibonacci numbers grow exponentially.) We thank him for allowing us to include a sketch of his argument here. Given a composition $\alpha=(\alpha_1,\ldots,\alpha_k)$ of $n-2$ into parts of size at least $2$, define the function $f_{\alpha}\colon [n] \to [n]$ as follows:
\begin{itemize}
\item $f_{\alpha}(1)=1$;
\item $f_{\alpha}(i)=i-1$ for $2 \leq i \leq 2k+2$;
\item if $2(k-j+2)+(\alpha_1+\alpha_2+\cdots+\alpha_{j-1})< i \leq 2(k-j+1)+(\alpha_1+\alpha_2+\cdots+\alpha_{j})$ for $1 \leq j \leq k$, then $f_{\alpha}(i)=2j+1$.
\end{itemize}
Then one can show that for two such compositions $\alpha$ and $\beta$ of $n-2$, the upho lattices $\mathcal{L}(f_{\alpha})$ and $\mathcal{L}(f_{\beta})$ are isomorphic if and only if $\alpha=\beta$. And for $n \geq 3$, there are $F_{n-3}$ such compositions~$\alpha$ of~$n-2$.
\end{remark}

To conclude this section, we note that while we have seen $\kappa(M_2)=2$, it remains possible that $\kappa(M_n)$ is infinite for some $n \geq 3$, even possibly for $n=3$. Indeed, the~$M_n$ have large automorphism groups, so \cref{thm:finite} does not apply. Of course, in light of \cref{cor:monoid_lat_finiteness}, if $\kappa(M_n)$ were infinite for some $n \geq 3$, it would have to be because there are non-colorable upho lattices $\mathcal{L}$ with core $M_n$.

\section{Listing all the ways a finite lattice can be a core} \label{sec:algorithm}

By now we see that the following question is central to understanding all the ways a given finite lattice can be realized as a core.

\begin{question} \label{quest:monoid}
Does every upho lattice come from a monoid; i.e., in the language of \cref{sec:color}, is every upho lattice colorable?
\end{question}

We are not sure whether we should expect a positive answer to \cref{quest:monoid}. Certainly, \cref{sec:color} provides some reasons to think the answer might be positive. So suppose for the moment that \cref{quest:monoid} does have a positive answer. Then from \cref{cor:monoid_lat_finiteness} it would immediately follow that $\kappa(L)$ is finite for all finite graded lattices $L$. But it would still not be clear how to list all the upho lattices of which~$L$ is a core. In this section, we speculate about how one could devise an algorithm for listing all the (colorable) upho lattices of which a given finite lattice is a core.

First of all, we note that for $L$ to be a core of an upho lattice, an obvious requirement is that it be a finite graded lattice whose maximum element $\hat{1}$ is the join of its atoms. Hence, we will only consider $L$ of this form.

Let $L$ be a finite graded lattice for which $\hat{1}$ is the join of the atoms. A \dfn{pre-upho coloring}\footnote{Our notion of pre-uhpo coloring of a finite graded lattice is similar to the notion of ``colored semi-upho poset'' considered in~\cite{fu2024upho}.} of $L$ is a function $c$ that maps each cover relation of $L$ to an atom of $L$, such that:
\begin{itemize}
\item $c(\hat{0}\lessdot s) = s$ for every atom $s\in L$;
\item for each $x \in L \setminus\{\hat{0},\hat{1}\}$, letting $y_1,\ldots,y_k$ be the covers of $x$, there is a rank- and color-preserving embedding of the interval $[x,y_1\vee\cdots\vee y_k]$ into~$L$.
\end{itemize}
(By an \dfn{embedding} of $P$ into $Q$ we mean a map $\varphi\colon P \to Q$ which is an isomorphism onto its image. That this isomorphism is \dfn{rank-preserving} means $\rho(\varphi(p))=\rho(p)$ for all $p \in P$; that it is \dfn{color-preserving} means $c(\varphi(x) \lessdot \varphi(y))=c(x \lessdot y)$ for all~$x \lessdot y \in P$.) Note that a rank-preserving embedding of $[x,y_1\vee\cdots\vee y_k]$ into~$L$ must in particular send $x$ to $\hat{0}$ and $y_1,\ldots,y_k$ to atoms of $L$.

The following is a colored version of \cite[Lemma~5.11]{hopkins2024upho1}.

\begin{lemma}
Let $\mathcal{L}$ be an upho lattice with core $L$. Let $c$ be an upho coloring of $\mathcal{L}$. Then $c$ restricts to a pre-upho coloring of $L$.
\end{lemma}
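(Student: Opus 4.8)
The plan is to exploit the color-preserving isomorphisms $\varphi_x \colon V_x \to \mathcal{L}$ that the upho coloring $c$ supplies at every point of $\mathcal{L}$, restricting the one at $x$ to the appropriate interval of the core. First I would clear away the bookkeeping. Because $L = [\hat{0}, s_1 \vee \cdots \vee s_r]$ contains every atom of $\mathcal{L}$, the atoms of $L$ are exactly the atoms of $\mathcal{L}$; hence the values taken by $c$ (which are atoms of $\mathcal{L}$) are precisely the atoms of $L$, and $c$ restricted to the cover relations of $L$ is a legitimate coloring of $L$ in the required sense. The first bullet of the definition of pre-upho coloring, namely $c(\hat{0}\lessdot s) = s$ for every atom $s$ of $L$, is then inherited verbatim from the corresponding condition for $c$ on $\mathcal{L}$.

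For the second bullet, I would fix $x \in L \setminus \{\hat{0},\hat{1}\}$ and let $y_1,\ldots,y_k$ be the covers of $x$ in $L$. Since $L$ is an interval of $\mathcal{L}$, each relation $x \lessdot y_i$ is also a cover relation in $\mathcal{L}$, so I may invoke the color-preserving isomorphism $\varphi_x\colon V_x \to \mathcal{L}$ and restrict it to the interval $[x, y_1 \vee \cdots \vee y_k]$. The key computation is that $\varphi_x$, being a poset isomorphism, sends $x$ to $\hat{0}$, preserves the join, and carries each cover $y_i$ of $x$ to an element $\varphi_x(y_i)$ that covers $\hat{0}$, i.e.\ to an atom of $\mathcal{L}$. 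Thus $\varphi_x$ maps the interval onto $[\hat{0}, \varphi_x(y_1) \vee \cdots \vee \varphi_x(y_k)]$. The crucial point is that this image lies inside $L$: its top is a join of atoms of $\mathcal{L}$, hence is $\leq s_1 \vee \cdots \vee s_r = \hat{1}$, so the whole image interval sits in $L = [\hat{0},\hat{1}]$.

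It then remains to observe that this restriction of $\varphi_x$ is exactly the embedding we need: it is color-preserving because $\varphi_x$ is, and it is rank-preserving in the intended sense because $\varphi_x$ shifts all ranks down uniformly by $\rho(x)$, so that $x$ (relative rank $0$ in the interval) goes to $\hat{0}$ and each $y_i$ (relative rank $1$) goes to an atom. I expect the only genuine subtlety to be purely definitional: the word ``rank-preserving'' here refers to the intrinsic rank function of the interval $[x, y_1\vee\cdots\vee y_k]$, in which $x$ has rank $0$, rather than to the ambient rank function of $\mathcal{L}$. Once that convention is fixed, the entire argument reduces to the single structural observation that the image of the interval, being generated by a join of atoms, cannot escape the core $L$.
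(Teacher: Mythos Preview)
Your proof is correct and follows essentially the same approach as the paper's: restrict the color-preserving isomorphism $\varphi_x$ to the interval $[x, y_1 \vee \cdots \vee y_k]$ and check that its image lands in $L$. Your containment argument (the $\varphi_x(y_i)$ are atoms, so their join is $\leq s_1 \vee \cdots \vee s_r$) is a minor rephrasing of the paper's (the join $y_1 \vee \cdots \vee y_k$ sits below the join of \emph{all} covers of $x$, whose image is $\hat{1}_L$), and you are a bit more explicit about the bookkeeping on atoms and the rank convention; but the substance is identical.
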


\begin{proof}
Let $x \in L \setminus\{\hat{0},\hat{1}\}$ and let $y_1,\ldots,y_k\in L$ be the elements covering $x$. Let $\varphi_x\colon V_x \to \mathcal{L}$ be an isomorphism verifying that the coloring $c$ of $\mathcal{L}$ is upho. Then $\varphi_x$ restricts to a rank- and color-preserving embedding of $[x,y_1\vee\cdots\vee y_k] \subseteq L$ into~$L$. Indeed, the only thing that needs to be checked is that $\varphi_x([x,y_1\vee\cdots\vee y_k]) \subseteq L$, because that $\varphi_x$ restricts to a rank- and color-preserving embedding of $[x,y_1\vee\cdots\vee y_k]$ into $\mathcal{L}$ is clear. To see that $\varphi_x([x,y_1\vee\cdots\vee y_k]) \subseteq L$, first note that since the core~$L$ is an interval in~$\mathcal{L}$, it is a sublattice of~$\mathcal{L}$. Thus, the join $y_1\vee\cdots\vee y_k$ is the same whether considered in $L$ or~$\mathcal{L}$. Since $y_1\vee\cdots\vee y_k \in \mathcal{L}$ is less than the join of \emph{all} the elements in $\mathcal{L}$ covering $x$ (whose image under $\varphi_x$ is the maximum of the core~$L$), it must be that $\varphi_x(y_1\vee\cdots\vee y_k)$, and hence all of~$\varphi_x([x,y_1\vee\cdots\vee y_k])$, belongs to~$L$.
\end{proof}

So in order for $L$ to be the core of some colored upho lattice, it must admit a pre-upho coloring. What about the converse? Does a pre-uhpo coloring of $L$ give us a colored upho lattice of which $L$ is the core? We speculate about this in the following conjecture.

\begin{conj} \label{conj:precoloring_monoid}
Let $L$ be a finite graded lattice for which $\hat{1}$ is the join of the atoms. Let $c$ be a pre-upho coloring of $L$. Define the monoid $M$ by
\[ M = \left\langle s_1,\ldots,s_r\mid \parbox{3.75in}{\begin{center} $c(\hat{0}=x_0\lessdot x_1)c(x_1\lessdot x_2)\cdots c(x_{k-1}\lessdot x_k = x_1 \vee y_1) =$ \\ $c(\hat{0}=y_0\lessdot y_1)c(y_1\lessdot y_2)\cdots c(y_{k-1}\lessdot y_k = x_1 \vee y_1)$ \end{center} }\right\rangle\]
where the generators are the atoms $s_1,\ldots,s_r$ of $L$, and the relations correspond to all pairs of saturated chains $\hat{0}=x_0\lessdot x_1\lessdot \cdots \lessdot x_k=x_1 \vee y_1$, $\hat{0}=y_0\lessdot y_1\lessdot \cdots \lessdot y_k=x_1 \vee y_1$ from $\hat{0}$ to the join $x_1 \vee y_1$ of two atoms $x_1,y_1$ of $L$. Then $M$ is left-cancellative, and every pair of elements in $M$ has a greatest common left divisor. Hence $\mathcal{L} \coloneqq (M,\leq_L)$ is an upho meet semilattice, and there is a rank-preserving embedding of $L$ into $\mathcal{L}$.
\end{conj}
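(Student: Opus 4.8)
The plan is to establish all four assertions at once by building the colored upho poset directly and reading the monoid off from it. Starting from the pre-upho coloring $c$ of $L$, I would construct a colored poset $\mathcal{P}$ rank by rank: each element is given exactly $r$ upward cover edges, colored $s_1,\ldots,s_r$, and the local shape of $\mathcal{P}$ just above an element $x$ — the interval up to the join of the covers of $x$ — is glued in as a fresh copy of the corresponding interval of $L$ via the rank- and color-preserving embedding supplied by the pre-upho axiom. If this gluing can be carried out consistently for every element at every rank, then $\mathcal{P}$ is by construction a colored upho poset, so \cref{lem:monoids_colors} shows that its associated monoid is left-cancellative, and a pre-upho analogue of \cref{lem:monoids_colors_lat} identifies that monoid with the $M$ presented by the two-atom-join relations. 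Thus left-cancellativity and the correctness of the presentation would both be consequences of the consistency of the construction.

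\textbf{Meet-semilattice structure and the embedding, granting consistency.} Assuming $\mathcal{P}=(M,\leq_L)$ is a well-defined colored upho poset, the greatest common left divisor of $a,b\in M$ should be computed by a recursion that pushes the problem into the embedded core. If some atom $s$ left-divides both, write $a=sa'$ and $b=sb'$ (using left-cancellativity) and set $a\wedge b = s\,(a'\wedge b')$; if no atom is a common left divisor, then any common left divisor of positive rank would have a first letter dividing both, so the greatest common left divisor is $\hat0$. The one delicate point is that $a$ and $b$ may share \emph{several} common-divisor atoms, forcing the meet to begin with their join inside the core $L$; here one uses that $L$ is a lattice and that the core embeds as an interval, hence a sublattice, transported via upho-ness to the relevant principal filter. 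The rank-preserving embedding $\iota\colon L\to\mathcal{L}$ is then given by sending $p\in L$ to the color-word of any saturated chain $\hat0\to p$ in $L$; it is rank-preserving by homogeneity, and its well-definedness is precisely the pre-upho analogue of \cref{lem:monoids_colors_lat}, namely that the two-atom-join relations already force all saturated chains in $L$ with a common endpoint to coincide in $M$.

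\textbf{The main obstacle.} The genuine difficulty — and the reason the statement is only conjectural — is the consistency of the rank-by-rank gluing, equivalently the left-cancellativity of $M$. The pre-upho coloring guarantees only that each interval $[x,\bigvee(\text{covers of }x)]$ of $L$ admits \emph{some} rank- and color-preserving embedding into $L$; it says nothing about how the embeddings attached to different elements, or to one element approached from below along different chains, fit together. There is no a priori reason the local pieces cannot overlap incompatibly, so that two chains with identical color-words lead to distinct elements of $\mathcal{P}$; under the monoid relations these would be identified and left-cancellativity would collapse. Proving consistency amounts to showing that the system of partial isomorphisms furnished by $c$ is compatible in the sense of \eqref{eq:compatibility} (cf.\ \cref{lem:compatible_coloring}), or equivalently that the associated length-preserving rewriting system on $S^*$ is confluent.

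\textbf{Why the easy tool fails, and what I expect to be needed.} I do not expect this to be provable by manipulating the presentation alone: the convenient criterion \cref{lem:rels_left_cancel} does not apply, because for a single atom $s_i$ there can be many distinct words beginning with $s_i$ among the defining relations — the joins $s_i\vee s_j$ may sit at different ranks for different $j$, producing defining words of different lengths all starting with $s_i$. This is exactly why the rank-two case of \cref{thm:rank2_monoids}, where every two-atom join has rank two and each atom heads a unique relation word, was tractable via \cref{lem:rels_left_cancel}, whereas the general case is not. I therefore expect a proof to require genuinely new combinatorial input controlling how the local embeddings interact — for instance a coherence or diamond-type condition on the pre-upho coloring that holds automatically whenever $c$ arises by restriction from a bona fide upho coloring, but which must be verified directly for an abstract pre-upho coloring.
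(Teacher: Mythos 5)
First, a point of orientation: the statement you were asked to prove is \cref{conj:precoloring_monoid}, which the paper itself does \emph{not} prove --- it is stated as an open conjecture, known only in the rank-two case $L = M_n$ via \cref{thm:rank2_monoids}, and the paper explicitly suggests that Garside-theoretic techniques might be needed. So there is no paper proof to compare against, and your honest conclusion that the crux (left-cancellativity of $M$, i.e., confluence of the associated rewriting system) is open agrees with the paper's framing. Several of your observations are on target: the forward direction that a consistent colored construction yields a left-cancellative monoid via \cref{lem:monoids_colors}; the reason \cref{lem:rels_left_cancel} cannot apply in general (a single atom can head several defining words, of different lengths, since the joins $s_i \vee s_j$ sit at different ranks); and the identification of the rank-two case as exactly the tractable one.

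However, your proposed strategy is not merely incomplete --- its central reduction is false. Your rank-by-rank gluing forces the interval of $\mathcal{P}$ from $\hat{0}$ up to the join of the atoms to be a fresh copy of $L$, i.e., it forces the core of $\mathcal{P}$ to be exactly $L$. The conjecture deliberately does not assert this, and the paper's example in \cref{fig:blah1} shows why: for the pre-upho colored lattice $L$ there (seven elements), the monoid is $M = \langle a,b,c \mid aa=ba,\ aaa=caa\rangle$, and $(M,\leq_L)$ is an upho lattice whose core strictly contains $L$ (it has ten elements). Concretely, if your gluing were consistent for that input, then $\mathcal{P}$ would be a colored upho poset in which every defining relation of $M$ equates color-paths with a common endpoint; since $aba = aaa$ in $M$ (multiply $ba = aa$ on the left by $a$), the path colored $a,b,a$ from $\hat{0}$ would end at the top $\hat{1}_L$ of the glued copy of $L$, so the element reached by the path $a,b$ would lie strictly below $\hat{1}_L$, hence inside the glued copy of $L$; but inside that copy the atom $a$ has a unique cover, and the edge to it is colored $a$, not $b$ --- a contradiction. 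So consistency of your gluing is \emph{strictly stronger} than left-cancellativity of $M$, and it fails in general; the equivalence you assert in your ``main obstacle'' paragraph holds in only one direction. Any viable approach must allow the core of $\mathcal{L}$ to grow beyond $L$ (and, per \cref{fig:blah2}, must work with meet semilattices rather than lattices from the start), which is precisely why the conclusion of the conjecture is phrased as a rank-preserving embedding of $L$ into $\mathcal{L}$ rather than as an identification of $L$ with the core. Relatedly, your conditional middle paragraph silently identifies the glued poset $\mathcal{P}$ with $(M,\leq_L)$ --- the same example shows these cannot coincide --- and the well-definedness and injectivity of the chain-word map $L \to \mathcal{L}$ cannot be outsourced to a ``pre-upho analogue'' of \cref{lem:monoids_colors_lat}: the proof of that lemma uses left-cancellativity and the existence of joins in an ambient upho lattice, which is exactly what is at stake here.
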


\begin{remark}
\Cref{thm:rank2_monoids} shows that \cref{conj:precoloring_monoid} is true in the simplest nontrivial case where $L=M_n$ for some $n \geq 2$.  
\end{remark}

Since verifying that a coloring of a finite lattice $L$ is pre-upho is clearly a finite check, \cref{conj:precoloring_monoid}, if correct, would \emph{almost} give us an algorithm for listing all the colorable upho lattices $\mathcal{L}$ of which a given finite lattice $L$ is a core. However, there are a few deficiencies in this conjecture, as we now explain. 

\begin{figure}
\begin{center}
    \begin{tikzpicture}
    \node[circle, inner sep=1pt, fill=black] (1) at (0,0) {};
    \node[circle, inner sep=1pt, fill=black, label=180:{\color{red}$a$}] (2) at (-1.5,1) {};
    \node[circle, inner sep=1pt, fill=black, label=180:{\color{blue}$b$}] (3) at (0,1) {};
    \node[circle, inner sep=1pt, fill=black, label=0:{\color{green!80!black}$c$}] (4) at (1.5,1) {};
    \node[circle, inner sep=1pt, fill=black] (5) at (-0.75,2) {};
    \node[circle, inner sep=1pt, fill=black] (6) at (1.5,2) {};
    \node[circle, inner sep=1pt, fill=black] (7) at (0,3) {};
    \draw[thick,red] (1)--(2)--(5)--(3)--(5)--(7)--(6)--(4);
    \draw[thick,blue] (1)--(3);
    \draw[thick,green!80!black] (1)--(4);
    \end{tikzpicture} 
    \qquad \vrule \qquad 
    \begin{tikzpicture}
    \node[circle, inner sep=1pt, fill=black] (1) at (0,0) {};
    \node[circle, inner sep=1pt, fill=black, label=180:{\color{red}$a$}] (2) at (-1.5,1) {};
    \node[circle, inner sep=1pt, fill=black, label=180:{\color{blue}$b$}] (3) at (0,1) {};
    \node[circle, inner sep=1pt, fill=black, label=0:{\color{green!80!black}$c$}] (4) at (1.5,1) {};
    \node[circle, inner sep=1pt, fill=black] (5) at (-0.75,2) {};
    \node[circle, inner sep=1pt, fill=black] (6) at (1.5,2) {};
    \node[circle, inner sep=1pt, fill=black] (7) at (0,3) {};
    \node[circle, inner sep=1pt, fill=black] (8) at (-2,2) {};
    \node[circle, inner sep=1pt, fill=black] (9) at (0,2) {};
    \node[circle, inner sep=1pt, fill=black] (10) at (0.75,2) {};
    \draw[thick,red] (1)--(2)--(5)--(3)--(5)--(7)--(6)--(4);
    \draw[thick,blue] (1)--(3);
    \draw[thick,green!80!black] (1)--(4);
    \draw[thick,blue] (2)--(8);
    \draw[thick,red] (8)--(7);
    \draw[thick,blue] (3)--(9);
    \draw[thick,red] (9)--(7);
    \draw[thick,blue] (4)--(10);
    \draw[thick,red] (10)--(7);
    \end{tikzpicture}
\end{center}
    \caption{Inputting the coloring of the lattice $L$ on the left into \cref{conj:precoloring_monoid} yields an upho lattice $\mathcal{L}$ whose core, depicted on the right, is bigger than $L$. Here $M=\langle a,b,c\mid aa=ba, aaa=caa\rangle$.} \label{fig:blah1}
\end{figure}
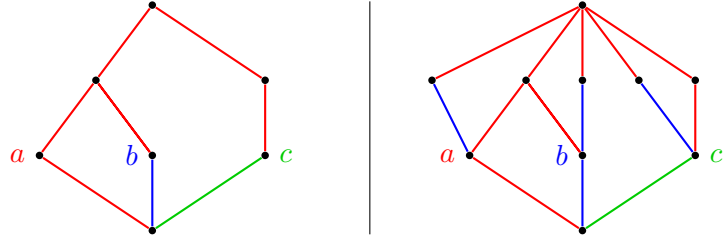

First of all, the conjecture only guarantees that there is a rank-preserving embedding of $L$ into $\mathcal{L}$. This means that $L$ sits inside the core of $\mathcal{L}$, but the core could potentially be bigger than just $L$. Indeed, \cref{fig:blah1} shows an example where the core of the output~$\mathcal{L}$ is strictly bigger than the input~$L$. But this is not such a serious problem for our desired algorithm, as we can check that the core of any output $\mathcal{L}$ is really exactly~$L$ just by looking at a finite portion of $\mathcal{L}$.

Another issue with \cref{conj:precoloring_monoid} is that it only guarantees that $\mathcal{L}$ is a meet semilattice: it may fail to be a lattice because pairs of elements may fail to have upper bounds. Indeed, \cref{fig:blah2} shows an example where the output $\mathcal{L}$ is not a lattice. This is a more serious problem, because checking that upper bounds of all pairs of elements exist is a priori an infinite check.

\begin{figure}
\begin{center}
    \begin{tikzpicture}
    \node[circle, inner sep=1pt, fill=black] (1) at (0,0) {};
    \node[circle, inner sep=1pt, fill=black, label=180:{\color{red}$a$}] (2) at (-1,1) {};
    \node[circle, inner sep=1pt, fill=black, label=0:{\color{blue}$b$}] (3) at (1,1) {};
    \node[circle, inner sep=1pt, fill=black] (4) at (-1,2) {};
    \node[circle, inner sep=1pt, fill=black] (5) at (1,2) {};
    \node[circle, inner sep=1pt, fill=black] (6) at (0,3) {};
    \draw[thick,red] (1)--(2);
    \draw[thick,blue] (1)--(3);
    \draw[thick,blue] (2)--(4)--(6);
    \draw[thick,red] (3)--(5)--(6);
    \end{tikzpicture}
    \qquad \vrule \qquad
    \begin{tikzpicture}[scale=.4]
    \node[circle, inner sep=1pt, fill=black] (1) at (0,0) {};
    \node[circle, inner sep=1pt, fill=black, label=180:{\color{red}$a$}] (2) at (-2,2) {};
    \node[circle, inner sep=1pt, fill=black, label=0:{\color{blue}$b$}] (3) at (2,2) {};
    \node[circle, inner sep=1pt, fill=black] (4) at (-5,4) {};
    \node[circle, inner sep=1pt, fill=black] (5) at (-2,4) {};
    \node[circle, inner sep=1pt, fill=black] (6) at (2,4) {};
    \node[circle, inner sep=1pt, fill=black] (7) at (5,4) {};
    \node[circle, inner sep=1pt, fill=black] (8) at (-8,6) {};
    \node[circle, inner sep=1pt, fill=black] (9) at (-5,6) {};
    \node[circle, inner sep=1pt, fill=black] (10) at (-2,6) {};
    \node[circle, inner sep=1pt, fill=black] (11) at (0,6) {};
    \node[circle, inner sep=1pt, fill=black] (12) at (2,6) {};
    \node[circle, inner sep=1pt, fill=black] (13) at (5,6) {};
    \node[circle, inner sep=1pt, fill=black] (14) at (8,6) {};
    \node[circle, inner sep=1pt, fill=black] (15) at (-11,8) {};
    \node[circle, inner sep=1pt, fill=black] (16) at (-9,8) {};
    \node[circle, inner sep=1pt, fill=black] (17) at (-7,8) {};
    \node[circle, inner sep=1pt, fill=black] (18) at (-5,8) {};
    \node[circle, inner sep=1pt, fill=black] (19) at (-3,8) {};
    \node[circle, inner sep=1pt, fill=black] (20) at (-1,8) {};
    \node[circle, inner sep=1pt, fill=black] (21) at (1,8) {};
    \node[circle, inner sep=1pt, fill=black] (22) at (3,8) {};
    \node[circle, inner sep=1pt, fill=black] (23) at (5,8) {};
    \node[circle, inner sep=1pt, fill=black] (24) at (7,8) {};
    \node[circle, inner sep=1pt, fill=black] (25) at (9,8) {};
    \node[circle, inner sep=1pt, fill=black] (26) at (11,8) {};
    \draw[thick,red] (1)--(2)--(4)--(8)--(15);
    \draw[thick,blue] (1)--(3)--(7)--(14)--(26);
    \draw[thick,blue] (2)--(5)--(11);
    \draw[thick,red] (3)--(6)--(11);
    \draw[thick,blue] (4)--(9)--(18);
    \draw[thick,red] (5)--(10)--(18);
    \draw[thick,blue] (6)--(12)--(23);
    \draw[thick,red] (7)--(13)--(23);
    \draw[thick,blue] (8)--(16);
    \draw[thick,red] (9)--(17);
    \draw[thick,blue] (10)--(19);
    \draw[thick,red] (11)--(20);
    \draw[thick,blue] (11)--(21);
    \draw[thick,red] (12)--(22);
    \draw[thick,blue] (13)--(24);
    \draw[thick,red] (14)--(25);
\end{tikzpicture}
\end{center}
    \caption{Inputting the coloring of the lattice $L$ on the left into \cref{conj:precoloring_monoid} yields the upho meet semilattice $\mathcal{L}$ on the right, which is not a lattice (cf.~\cite[Figure~1]{stanley2021rational}). Here $M=\langle a,b \mid abb=baa \rangle$.} \label{fig:blah2}
\end{figure}
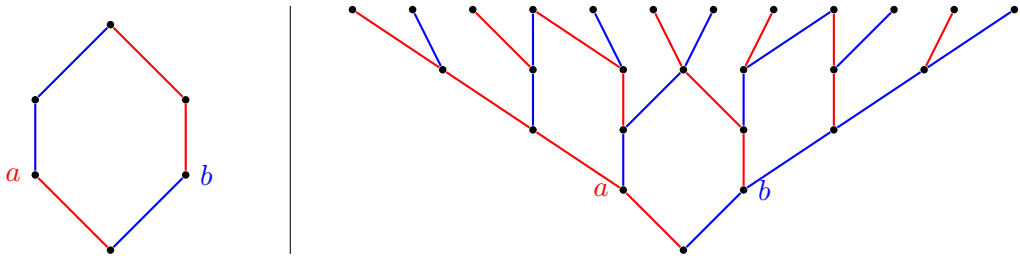

Finally, different monoids $M$ may lead to isomorphic upho lattices $\mathcal{L}$, so if we wanted our algorithm to list each upho lattice only once, we would have to check for isomorphisms, which again is a priori an infinite check. 

\begin{remark} \label{rem:semi}
Since the output of \cref{conj:precoloring_monoid} is an infinite meet semilattice~$\mathcal{L}$, one might wonder whether we could modify this conjecture to also allow as input a finite meet semilattice~$L$. But we have reason to think that this is not possible, at least not in any straightforward way. For example, consider the finite graded meet semilattice $L$ which is obtained from $B_3$ by removing its maximum~$\hat{1}$. We depict a coloring of this $L$ in \cref{fig:blah3}. The monoid we associate to this colored meet semilattice is $M = \langle a,b,c \mid aa = ba, bb = cb, ab = cc \rangle$. But $M$ is not left-cancellative! Indeed, we have $caa = cba = bba = baa = aaa = aba = cca$ in~$M$, even though $aa \neq ca$. The issue is that, although there are no ``local'' violations of left-cancellativity in the coloring of $L$, this coloring cannot be extended to a pre-upho coloring of $B_3$. So it seems the lattice property, in particular, the existence of joins, is really doing something in~\cref{conj:precoloring_monoid}.
\end{remark}

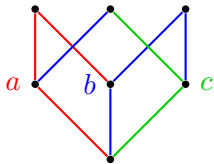
\begin{figure}
\begin{center}
    \begin{tikzpicture}
    \node[circle, inner sep=1pt, fill=black] (1) at (0,0) {};
    \node[circle, inner sep=1pt, fill=black, label=180:{\color{red}$a$}] (2) at (-1,1) {};
    \node[circle, inner sep=1pt, fill=black, label=180:{\color{blue}$b$}] (3) at (0,1) {};
    \node[circle, inner sep=1pt, fill=black, label=0:{\color{green!80!black}$c$}] (4) at (1,1) {};
    \node[circle, inner sep=1pt, fill=black] (5) at (-1,2) {};
    \node[circle, inner sep=1pt, fill=black] (6) at (0,2) {};
    \node[circle, inner sep=1pt, fill=black] (7) at (1,2) {};
    \draw[thick,red] (1)--(2)--(5);
    \draw[thick,red] (3)--(5);
    \draw[thick,blue] (1)--(3)--(7);
    \draw[thick,blue] (2)--(6);
    \draw[thick,blue] (4)--(7);
    \draw[thick,green!80!black] (1)--(4)--(6);
    \end{tikzpicture} 
\end{center}
    \caption{The colored meet semilattice $L$ from~\cref{rem:semi} showing that \cref{conj:precoloring_monoid} does not extend to meet semilattices.} \label{fig:blah3}
\end{figure}

To conclude, we note that the construction in \cref{conj:precoloring_monoid} feels spiritually similar to a known construction of Garside monoids from colored finite lattices discussed in~\cite{mccammond2005introduction}. Hence, techniques from Garside theory~\cite{dehornoy1999gaussian, dehornoy2015foundations} might be useful for proving this conjecture.

\bibliography{upho_lattices_2}{}
\bibliographystyle{abbrv}

\end{document}